\renewcommand\a{\alpha}
\renewcommand\b{\beta}
\newcommand\g{\gamma}
\renewcommand\d{\delta}
\newcommand\la{\lambda}
\newcommand\e{\eta}
\renewcommand\th{\theta}
\newcommand\s{\sigma}
\renewcommand\t{\tau}
\newcommand\Om{\Omega}
\newcommand\vD{\varDelta}
\newcommand\vL{\varLambda}
\newcommand\ve{\varepsilon}
\newcommand\BQ{\mathbf Q}
\newcommand\BZ{\mathbf Z}
\newcommand\BM{\mathbf M}
\newcommand\Bm{\mathbf m}
\newcommand\Bt{\mathbf t}
\newcommand\Bu{\mathbf u}
\newcommand\Bi{\mathbf i}
\newcommand\Bbe{\boldsymbol\beta}
\newcommand\Bdel{\boldsymbol\delta}
\newcommand\Bla{\boldsymbol\lambda}
\newcommand\Bmu{\boldsymbol\mu}
\newcommand\Bnu{\boldsymbol\nu}
\newcommand\Bth{\boldsymbol{\th}}
\newcommand\SA{\mathscr{A}}
\newcommand\SB{\mathscr{B}}
\newcommand\SM{\mathscr{M}}
\newcommand\SO{\mathscr{O}}
\newcommand\SP{\mathscr{P}}
\newcommand\SX{\mathscr{X}}
\newcommand\iv{^{-1}}
\newcommand\wh{\widehat}
\newcommand\wt{\widetilde}
\newcommand\lra{\leftrightarrow}
\newcommand\lv{\prec}
\newcommand\gv{\succ}
\newcommand\lve{\preceq}
\newcommand\gve{\succeq}
\newcommand\id{\operatorname{id}}
\newcommand\lp{\operatorname{\!\langle\!}}
\newcommand\rp{\operatorname{\!\rangle\!}}
\newcommand{\isom}{\,\raise2pt\hbox{$\underrightarrow{\sim}$}\,}
\numberwithin{equation}{section}
\newtheorem{thm}{Theorem}[section]
\newtheorem{lem}[thm]{Lemma}
\newtheorem{cor}[thm]{Corollary}
\newtheorem{prop}[thm]{Proposition}
\def \para#1{\par\medskip\textbf{#1}
              \addtocounter{thm}{1}}
\def \remark#1{\par\medskip\noindent
                \textbf{Remark #1}
                \addtocounter{thm}{1}}
\begin{document}
\setlength{\baselineskip}{4.9mm}
\setlength{\abovedisplayskip}{4.5mm}
\setlength{\belowdisplayskip}{4.5mm}
%%%
%%%
\renewcommand{\theenumi}{\roman{enumi}}
\renewcommand{\labelenumi}{(\theenumi)}
\renewcommand{\thefootnote}{\fnsymbol{footnote}}
%%%
\renewcommand{\thefootnote}{\fnsymbol{footnote}}
%%%
\allowdisplaybreaks[2]
%\NoBlackBoxes
\parindent=20pt
%\addtocounter{section}{1}

%%%%%%%%%%%%%%%%%%%%
%%%%%%%%%%%%%%%%%%%%%%%%%%%%%%%%%%%
\pagestyle{myheadings}
\medskip
\begin{center}
 {\bf Kostka functions associated to complex reflection groups \\
and a conjecture of Finkelberg-Ionov} 
\end{center}

\par\bigskip

\begin{center}
Toshiaki Shoji
\\  
\vspace{0.5cm}

\end{center}

\title{}

\begin{abstract}
Kostka functions $K^{\pm}_{\Bla,\Bmu}(t)$, indexed by $r$-partitions $\Bla, \Bmu$ of $n$, 
are a generalization of Kostka polynomials $K_{\la,\mu}(t)$ indexed by partitions 
$\la, \mu$ of $n$.  It is known that Kostka polynomials have an interpretation in terms
of Lusztig's partition function. Finkelberg and Ionov defined alternate functions 
$K_{\Bla, \Bmu}(t)$ by using an analogue of Lusztig's partition function, and showed that 
$K_{\Bla,\Bmu}(t) \in \BZ_{\ge 0}[t]$ for generic $\Bmu$ by making use of a coherent realization.
They conjectured that $K_{\Bla,\Bmu}(t)$ coincide with $K^-_{\Bla,\Bmu}(t)$.  
In this paper, we show that their conjecture holds. We also discuss the multi-variable
version, namely, $r$-variable Kostka functions $K^{\pm}_{\Bla,\Bmu}(t_1, \dots, t_r)$.
\end{abstract}

\maketitle
%\markboth{SHOJI}{KOSTKA FUNCTIONS}
\pagestyle{myheadings}

\section{Introduction }
Let $K_{\la,\mu}(t) \in \BZ[t]$ be Kostka polynomials indexed by partitions 
$\la,\mu$ of $n$. It is known by [M, III, Example 4] that Kostka polynomials 
have an interpretation in terms of Lusztig's partition function.  
(Actually, Lusztig defined a partition function in [L1], and conjectured 
 a formula on a $q$-analogue 
of the weight multiplicities for semisimple Lie algebras in terms of 
his partition function, as a generalization of the above result which corresponds to
the case of type $A$.  Soon after that the conjecture was proved by [K].) 
Let $\SP_{n,r}$ be the set of $r$-tuples of partitions 
$\Bla = (\la^{(1)}, \dots, \la^{(r)})$ 
such that $\sum_{i = 1}^r |\la^{(i)}| = n$.  
In [S1, S2], as a generalization of the classical Kostka polynomials, Kostka functions 
$K^{\pm}_{\Bla,\Bmu}(t)$ attached to $\Bla, \Bmu \in \SP_{n,r}$ were introduced.
(In general, there exist two types, ``$+$'' and ``$-$'' types.  If $r = 1$ or 2, 
$K^+_{\Bla,\Bmu}(t) = K^-_{\Bla,\Bmu}(t)$. If $r = 1$, they coincide with the classical 
Kostka polynomials.)   
A priori, they are rational functions in $\BQ(t)$, 
and the construction depends on the choice of a total order on $\SP_{n,r}$.     
$K^{\pm}_{\Bla,\Bmu}(t)$ are called Kostka functions associated to complex 
reflection groups, or $r$-Kostka functions, in short 
(see [S1] for the relationship with the complex reflection 
group $S_n\ltimes (\BZ/r\BZ)^n$).   
\par
In [FI], Finkelberg and Ionov introduced polynomials $K_{\Bla, \Bmu}(t) \in \BZ[t]$ 
attached to $\Bla, \Bmu \in \SP_{n,r}$, by using an analogue of Lusztig's partition 
function on $GL_{mr}$, where we choose an integer $m$ such that 
the number of parts of $\la^{(i)}, \mu^{(i)}$ is smaller than $m$ for each $i$. 
They proved, in the case where $\Bmu$ is regular (see 7.6 for the precise 
definition), that $K_{\Bla,\Bmu}(t) \in \BZ_{\ge 0}[t]$, 
by showing the higher cohomology vanishing 
$H^{i > 0}(\SX, \wt\SO(\Bmu)) = 0$, where $\SX$ is a certain $(GL_m)^r$-equivariant 
vector bundle over the flag variety $\SB$ of $(GL_m)^r$, and 
$\wt\SO(\Bmu)$ is the pull-back of the $(GL_m)^r$-equivariant ample line bundle 
$\SO(\Bmu)$ over $\SB$ associated to $\Bmu \in \SP_{n,r}$.
(In fact, they proved the higher cohomology vanishing by showing the Frobenius splitting 
of $\SX$.) In turn, the higher cohomology vanishing for general $\Bmu$ was recently proved 
by Hu [H]. Hence the positivity property for $K_{\Bla,\Bmu}(t)$ now holds 
without any restriction. 
Their result is a natural 
generalization of the coherent realization of the classical Kostka polynomials 
due to Brylinski [B].
On the other hand, the vector bundle $\SX$ is nothing but (a special case of ) 
Lusztig's iterated covolution diagram ([L2]) associated to the cyclic quiver of $r$-vertices. 
In this direction, Orr and Shimozono [OS] constructed a wider class of polynomials, 
as a generalization of $K_{\Bla,\Bmu}(\Bt)$ of [FI], by making use of Lusztig's 
iterated covolution diagram associated to arbitrary quivers. 
\par
Finkelberg and Ionov conjectured in [FI] that $K_{\Bla,\Bmu}(t)$ coincide with our 
$K^-_{\Bla,\Bmu}(t)$. More generally,   
they construct in [FI] polynomials $K_{\Bla, \Bmu}(t_1, \dots, t_r) \in \BZ[t_1, \dots, t_r]$,
a multi-variable version of $K_{\Bla,\Bmu}(t)$,
by making use of Lusztig's partition function. 
Those polynomials have a property that $K_{\Bla,\Bmu}(t_1, \dots, t_r)$
coincides with $K_{\Bla,\Bmu}(t)$ if $t_1 = \cdots = t_r = t$. 
Inspired by their work, we generalize our $r$-Kostka functions to the 
multi-variable case.  In the one-variable case, $K^{\pm}_{\Bla,\Bmu}(t)$ are defined 
as the coefficients of the expansion of Schur functions $s_{\Bla}(x)$ 
in terms of the Hall-Littlewood
functions $P^{\pm}_{\Bmu}(x;t)$, where $x = (x^{(1)}, \dots, x^{(r)})$ are $r$ types 
of infinitely many variables $x^{(k)} = x^{(k)}_1, x^{(k)}_2, \dots$.         
Hence we generalize the definition of Hall-Littlewood functions to the multi-parameter
case $P^{\pm}_{\Bla}(x;\Bt)$ with $\Bt = (t_1, \dots, t_r)$, and define 
$K^{\pm}_{\Bla, \Bmu}(\Bt)$ as the coefficients of the expansion of $s_{\Bla}(x)$ 
in terms of $P^{\pm}_{\Bla}(x;\Bt)$. 
Note that the construction of Hall-Littlewood functions depends on the choice of the 
total order which is compatible with the dominance order on $\SP_{n,r}$, and they are
symmetric functions with respect to $x^{(1)}, \dots, x^{(r)}$ with coefficients in 
$\BQ(\Bt)$. 
We show that both of $K^{\pm}_{\Bla,\Bmu}(\Bt)$ have an interpretation in terms of 
an analogue of Lusztig's partition function, and that $K^-_{\Bla,\Bmu}(\Bt)$ 
coincides with their $K_{\Bla,\Bmu}(\Bt)$, which proves their conjecture in a generalized 
form. 
\par
The main step for the proof is to establish a closed formula for 
Hall-Littlewood functions as given in [M, III, 2] for the
classical case.  By using this formula, one can show that  Hall-Littlewood 
functions are actually independent of the choice of the total order, and are
symmetric functions with coefficients in $\BZ[\Bt]$.  This implies that 
$K^{\pm}_{\Bla,\Bmu}(\Bt) \in \BZ[\Bt]$, and  
they are independent of the choice of the total order. 
\par
Note that in establishing the closed formula for Hall-Littlewood functions, 
the multi-variable setting is essential. 
Even if one is interested only in the one-variable case, our discussion does not
work without multi-variable setting. 
\par
In December of 2015, Michael Finkelberg gave a lecture concerning their conjecture 
at the conference in Shanghai, Chongming Island.  This work arose from his 
interesting talk there, and from his question on the stability of Kostka functions 
on the occasion of the conference at Besse-et-St-Anastaise in 2013. 
The author is very grateful for him for stimulating discussions.  

\par\bigskip\noindent
{\bf Contents}
\par\medskip\noindent
1. Introduction  \\
2. Hall-Littlewood functions with multi-parameter  \\
3. Comparison of Hall-Littlewood functions for different $r$ \\
4. Closed formula for Hall-Littlewood functions \\
5. Closed formula for Hall-Littlewood functions -- continued  \\
6. Closed formula for Hall-Littlewood functions -- ``$+$''case  \\
7. A conjecture of Finkelberg-Ionov       

\section{Hall-Littlewood functions with multi-parameter}

\para{2.1.}
First we recall basic properties of Hall-Littlewood functions and Kostka polynomials
in the original setting, following [M]. 
Let $\SP_n$ be the set of partitions $\la = (\la_1, \dots, \la_k)$ with $\la_i \ge 0$ 
such that 
$|\la| = \sum_i \la_i = n$.
For a partition $\la$, the length $l(\la)$ of $\la$ is defined as the number of $\la_i$ 
such that $\la_i \ne 0$.  
\par
Let $\vL = \vL(y) = \bigoplus_{n \ge 0}\vL^n$ be the ring of symmetric functions 
over $\BZ$ with respect to the variables $y = (y_1, y_2, \dots)$, where $\vL^n$ denotes the 
free $\BZ$-module of symmetric functions of degree $n$. 
For each $\la \in \SP_n$, the Schur function $s_{\la}(y) \in \vL$ is defined as follows; 
first choose finitely many variables 
$y_1, \dots, y_m$ such that $m \ge l(\la)$, and define the Schur polynomial 
$s_{\la}(y_1, \dots, y_m) \in \BZ[y_1, \dots, y_m]$ by 
\begin{equation*}
s_{\la}(y_1, \dots, y_m) = \det(y_i^{\la_j + m - j})/\det(y_i^{m-j}).
\end{equation*}
$s_{\la}(y_1, \dots, y_m)$ satisfies the stability property 
\begin{equation*}
s_{\la}(y_1, \dots, y_m, y_{m+1})|_{y_{m+1} = 0} = s_{\la}(y_1, \dots, y_m),
\end{equation*}
and one can define $s_{\la}(y)$ by 
\begin{equation*}
s_{\la}(y) = \lim_{\substack{\longleftarrow \\ m }}s_{\la}(y_1, \dots, y_m).
\end{equation*}
Then 
$\{ s_{\la} \mid \la \in \SP_n \}$ gives a $\BZ$-basis of $\vL^n$. 

\para{2.2}
We fix $m$, and consider a partition $\la = (\la_1, \dots, \la_m) \in \SP_n$ 
such that $l(\la) \le m$. We denote $\la$ as 
$\la = (0^{m_0}, 1^{m_1}, 2^{m_2}, \dots)$, where $m_i = \sharp\{ j \mid \la_j = i\}$
for $i = 0,1, 2, \dots$.   
Let $t$ be an indeterminate. We define 
a polynomial $v_{\la}(t) \in \BZ[t]$ as follows; 
for each integer $k \ge 1$, we define $v_k(t)$ by 
\begin{equation*}
\tag{2.2.1}
v_k(t) = \sum_{w \in S_k}t^{\ell(w)} = \prod_{i=1}^k\frac{1 - t^i}{1 - t},
\end{equation*}
where $\ell(w)$ is the length function of the symmetric group $S_k$ of degree $k$, 
and put $v_k(t) = 1$ for $k = 0$.
Set
\begin{equation*}
\tag{2.2.2}
v_{\la}(t) = \prod_{i \ge 0} v_{m_i}(t). 
\end{equation*}
\par
The symmetric group $S_m$ acts on the set of variables 
$\{y_1, \dots, y_m\}$ as permutations.  
For $l(\la) \le m$, we define the Hall-Littlewood 
polynomial $P_{\la}(y_1, \dots, y_m;t) \in \BZ[y_1, \dots, y_m;t]$ by 

\begin{equation*}
\tag{2.2.3}
P_{\la}(y_1, \dots, y_m;t) = \frac{1}{v_{\la}(t)}
               \sum_{w \in S_m}w\biggl( y^{\la}\prod_{i < j}\frac{y_i - ty_j}{y_i - y_j}\biggr), 
\end{equation*}
where we use the standard notation $y^{\a} = y_1^{\a_1}y_2^{\a_2}\cdots y_m^{\a_m}$ 
for $\a = (\a_1, \dots, \a_m) \in \BZ^m$. 
\par
Let $\vL[t] = \BZ[t]\otimes_{\BZ}\vL$ be the ring of symmetric functions with 
coefficients in $\BZ[t]$.  We have $\vL[t] = \bigoplus_{n \ge 0}\vL^n[t]$, 
where $\vL^n[t] = \BZ[t]\otimes_{\BZ}\vL^n$.
The Hall-Littlewood polynomial has the stability property, and one can define 
the Hall-Littlewood function $P_{\la}(y;t) \in \vL[t]$ by taking $m \mapsto \infty$. 
$\{ P_{\la}(y;t) \mid \la \in \SP_{n,r} \}$ gives a $\BZ[t]$-basis of the free 
$\BZ[t]$-module $\vL^n[t]$. 
Another type of Hall-Littlewood function $Q_{\la}(y;t)$ is defined by 
$Q_{\la}(y;t) = b_{\la}(t)P_{\la}(y;t)$, where 
$b(t) = v_{\la}(t)(1-t)^{l}/v_{m_0}(t) \in \BZ[t]$
with $l = l(\la)$. 
$\{ Q_{\la} \mid \la \in \SP_n\}$ gives a $\BQ(t)$-basis of 
$\vL^n_{\BQ}(t) = \BQ(t)\otimes_{\BZ}\vL^n$.

\para{2.3.}
For $\la, \mu \in \SP_n$, the Kostka polynomials $K_{\la,\mu}(t) \in \BZ[t]$ are 
defined by the formula
\begin{equation*}
\tag{2.3.1}
s_{\la}(y) = \sum_{\mu \in \SP_n}K_{\la,\mu}(t)P_{\mu}(y;t).
\end{equation*}

\par
We define a partial order $\xi \le \eta$, the so-called dominance order, on $\BZ^m$   
by the condition, for 
$\xi = (\xi_1, \dots, \xi_m), \eta = (\eta_1, \dots, \eta_m) \in \BZ^m$,
\begin{equation*}
\tag{2.3.2}
\sum_{i = 1}^k \xi_i \le \sum_{i=1}^k\eta_i \quad \text{ for } k = 1, \dots, m.
\end{equation*}  
\par
For each partition $\la = (\la_1, \dots, \la_m)$, we define an integer $n(\la)$ 
by 
\begin{equation*}
\tag{2.3.3}
n(\la) = \sum_{i=1}^m(i-1)\la_i.  
\end{equation*}
It is known that $K_{\la,\mu}(t) = 0$ unless $\mu \le \la$, and in which case,
$K_{\la,\mu}(t)$ is monic of degree $n(\mu) - n(\la)$. 
\par
For any integer $s \ge 1$, we define a function $q_s(y_1, \dots, y_m ;t)$ by 
\begin{equation*}
\tag{2.3.4}
q_s(y_1, \dots, y_m;t) = (1 - t)\sum_{i = 1}^my_i^s\prod_{j \ne i}\frac{y_i - ty_j}{y_i - y_j},
\end{equation*}
and put $q_s = 1$ for $s = 0$. 
The generating function for $q_s$ is given as follows ([M, III, (2.10)]). 
Let $u$ be another indeterminate.  Then we have 
\begin{equation*}
\tag{2.3.5}
\sum_{s = 0}^{\infty}q_s(y_1, \dots, y_m;t)u^s = 
             \prod_{i= 1}^m\frac{1 - tuy_i}{1  - uy_i}.
\end{equation*}
For a partition $\la = (\la_1, \dots, \la_m) \in \SP_n$, we define a function $q_{\la}$
by 
\begin{equation*}
q_{\la}(y_1, \dots, y_m;t) = \prod_{i = 1}^m q_{\la_i}(y_1, \dots, y_m ;t).
\end{equation*}
By taking $m \mapsto \infty$, $q_{\la}(y_1, \dots, y_m ;t)$ defines $q_{\la}(y;t) \in \vL^n[t]$.
Then $Q_{\la}$ has an expansion by $q_{\mu}$,
$Q_{\la} = q_{\la} + \sum_{\mu >  \la}a_{\la,\mu}(t)q_{\mu}$, 
with $a_{\la,\mu}(t) \in \BZ[t]$. 
Hence $\{ q_{\la} \mid \la \in \SP_n \}$ gives a $\BQ(t)$-basis of 
$\vL^n_{\BQ}(t)$. 

\para{2.4.}
We fix an integer $r \ge 2$, and consider the $r$ types of variables 
$x = (x^{(1)}, \dots, x^{(r)})$, where $x^{(k)}$ stands for the infinitely many 
variables $x^{(k)}_1, x^{(k)}_2, \dots$.  
We consider the ring of symmetric functions 
$\Xi = \Xi(x) = \vL(x^{(1)})\otimes\cdots\otimes \vL(x^{(r)})$, 
symmetric with respect to each variable $x^{(k)}$. 
We have $\Xi = \bigoplus_{n \ge 0}\Xi^n$, where $\Xi^n$ is the free $\BZ$-module 
consisting of homogeneous symmetric functions of degree $n$. 
\par
Let $\SP_{n,r}$ be the set of $r$-tuple of partitions 
$\Bla = (\la^{(1)}, \dots, \la^{(r)})$ such that $\sum_{k=1}^r|\la^{(k)}| = n$. 
For $\Bla \in \SP_{n,r}$, we choose an integer $m$ such that $m \ge l(\la^{(k)})$ 
for any $k$, and consider the finitely many variables 
$\{ x^{(k)}_i \mid 1\le k \le r, 1\le i \le m \}$.
We prepare the index set 
\begin{equation*}
\tag{2.4.1}
\SM = \SM(m) = \{ (k,i) \mid 1 \le k \le r, 1 \le i \le m \},
\end{equation*}
and write $x^{(k)}_i$ as $x_{\nu}$ for $\nu = (k,i) \in \SM$. 
We denote by $x_{\SM}$ the set of variables $\{ x_{\nu} \mid \nu \in \SM\}$.   
\par
 We define a polynomial $s_{\Bla}(x_{\SM})$ by 

\begin{equation*}
s_{\Bla}(x_{\SM}) = \prod_{k=1}^rs_{\la^{(k)}}(x^{(k)}_1, \dots, x^{(k)}_m) \in \BZ[x_{\SM}].
\end{equation*}

$s_{\Bla}(x_{\SM})$ has the stability property with respect to the operation 
$x^{(1)}_{m+1} = \cdots = x^{(r)}_{m+1} = 0$ in $\SM(m+1)$, 
and by taking $m \mapsto \infty$, one can 
define $s_{\Bla}(x) \in \Xi$.  Then  
$\{ s_{\Bla}(x) \mid \Bla \in \SP_{n,r}\}$ gives a basis of $\Xi^n$.  
\par
For a partition $\la \in \SP_n$, the monomial symmetric polynomial  
$m_{\la}(y_1, \dots, y_m) \in \BZ[y_1, \dots, y_m]$ is defined for $m \ge l(\la)$.  
For $\Bla \in \SP_{n,r}$, we define 
$m_{\Bla}(x_{\SM})$ by 
\begin{equation*}
m_{\Bla}(x_{\SM}) = \prod_{k=1}^r m_{\la^{(k)}}(x_1^{(k)}, \dots, x^{(k)}_m) \in \BZ[x_{\SM}].
\end{equation*} 
By taking $m \mapsto \infty$, one can define 
$m_{\Bla}(x) \in \Xi^n$, and $\{ m_{\Bla}(x) \mid \Bla \in \SP_{n,r}\}$ 
gives a basis of $\Xi^n$. 

\para{2.5.}
For any integer $s \ge 1$ we define a function 
$q^{(k)}_{s, \pm}(x;t)$ (for $x = x_{\SM}$) by 
\begin{equation*}
\tag{2.5.1}
q^{(k)}_{s,\pm}(x;t) = \sum_{i = 1}^m(x_i^{(k)})^{s-1}
          \frac{\prod_{j \ge 1}(x^{(k)}_i - tx_j^{(k\mp 1)})}
         {\prod_{j \ne i}(x_i^{(k)} - x_j^{(k)})},
\end{equation*} 
where we regard $k \in \BZ/r\BZ$, 
and put $q^{(k)}_{s, \pm}(x;t) = 1$ for $s = 0$.
\par
Let $u$ be another indeterminate. As in the proof of [S1, Lemma 2.3], 
by using the Lagrange's interpolation formula
\begin{equation*}
\prod_{i=1}^m\frac{1 - tux_i^{(k \mp 1)}}{1 - ux_i^{(k)}}
    = 1 + \sum_{i=1}^m\frac{ux_i^{(k)} - tux_i^{(k \mp 1)}}{1 - ux_i^{(k)}}
               \prod_{j \ne i}\frac{x_i^{(k)} - tx_j^{(k \mp 1)}}{x_i^{(k)} - x_j^{(k)}},
\end{equation*}
one can prove the formula

\begin{equation*}
\tag{2.5.2}
\sum_{s = 0}^{\infty}q^{(k)}_{s, \pm}(x;t)u^s = \prod_{i=1}^m 
         \frac{1 - tux_i^{(k \mp 1)}}{1 - ux_i^{(k)}}.
\end{equation*}
It follows from (2.5.2) that $q_{s,\pm}^{(k)}(x;t) \in \BZ[x_{\SM};t]$, and 
symmetric with respect to the variables $x^{(k)}, x^{(k \mp 1)}$.  Moreover, it satisfies the 
stability property.   
\par
Let $\Xi[t] = \BZ[t]\otimes_{\BZ}\Xi$ be the ring of symmetric functions in $\Xi$ 
with coefficients in $\BZ[t]$.  Put $\Xi^n[t] = \BZ[t]\otimes_{\BZ}\Xi^n$. 
More generally, we consider the multi-parameter case. 
Let $\Bt = (t_1, \dots, t_r)$ be $r$-parameters, and consider $\BZ[\Bt] = \BZ[t_1, \dots, t_r]$. 
Put $\Xi[\Bt] = \BZ[\Bt] \otimes_{\BZ}\Xi$. 
We have $\Xi[\Bt] = \bigoplus_{n \ge 0}\Xi^n[\Bt]$, where 
$\Xi^n[\Bt] = \BZ[\Bt]\otimes_{\BZ} \Xi^n$. 
\par
For $\Bla \in \SP_{n,r}$, we define polynomials
$q^{\pm}_{\Bla}(x_{\SM};\Bt) \in \BZ[x_{\SM};\Bt] = \BZ[x_{\SM}; t_1, \dots, t_r]$ by 

\begin{align*}
\tag{2.5.3}
q^{\pm}_{\Bla}(x_{\SM};\Bt) &= 
   \prod_{k \in \BZ/r\BZ}\prod_{i = 1}^m q^{(k)}_{\la^{(k)}_i, \pm}(x;t_{k-c}), \\
\end{align*}
where $c = 1$ for the ``$+$''-case, and $c = 0$ for the ``$-$''-case. 
Then $q^{\pm}_{\Bla}(x_{\SM};\Bt)$ satisfies the stability condition, and one can define 
$q^{\pm}_{\Bla}(x;\Bt) \in \Xi^n[\Bt]$.  
Note that if $t_1 = \cdots = t_r = t$, $q^{\pm}_{\Bla}(x;\Bt)$ coincides with 
$q^{\pm}_{\Bla}(x;t)$ defined in [S1, 2.4].
\par
Put $\Xi^n_{\BQ}(t) = \BQ(t) \otimes_{\BZ} \Xi^n$, and 
$\Xi^n_{\BQ}(\Bt) = \BQ(\Bt) \otimes_{\BZ} \Xi^n$. 
It is known by [S1, (4.7.2)] that $\{ q^{\pm}_{\Bla}(x;t) \mid \Bla \in \SP_{n,r} \}$ 
gives a $\BQ(t)$-basis of $\Xi^n_{\BQ}(t)$. 
The analogous fact holds also in the multi-parameter case.

\begin{lem}   %%%%   Lemma 2.7
$\{ q^{\pm}_{\Bla}(x;\Bt) \mid \Bla \in \SP_{n,r} \}$ gives a $\BQ(\Bt)$-basis 
of $\Xi^n_{\BQ}(\Bt)$. 
\end{lem}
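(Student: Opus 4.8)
The plan is to reduce the multi-parameter statement to the one-variable result [S1, (4.7.2)] by a specialization argument. First I would record the count: since $\{ m_{\Bmu}(x) \mid \Bmu \in \SP_{n,r} \}$ is a $\BZ$-basis of $\Xi^n$, the space $\Xi^n_{\BQ}(\Bt) = \BQ(\Bt)\otimes_{\BZ}\Xi^n$ is a $\BQ(\Bt)$-vector space whose dimension equals $|\SP_{n,r}|$, which is exactly the number of functions $q^{\pm}_{\Bla}(x;\Bt)$. Hence it suffices to prove that these functions are linearly independent over $\BQ(\Bt)$; linear independence of a family of the right cardinality then forces it to be a basis.

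Next, since $q^{\pm}_{\Bla}(x;\Bt) \in \Xi^n[\Bt] = \BZ[\Bt]\otimes_{\BZ}\Xi^n$, I can expand $q^{\pm}_{\Bla}(x;\Bt) = \sum_{\Bmu \in \SP_{n,r}} c_{\Bla,\Bmu}(\Bt)\, m_{\Bmu}(x)$ with coefficients $c_{\Bla,\Bmu}(\Bt) \in \BZ[\Bt]$. Let $M(\Bt) = (c_{\Bla,\Bmu}(\Bt))$ be the resulting transition matrix, so that $\det M(\Bt) \in \BZ[\Bt]$. Because $\{ m_{\Bmu} \}$ is itself a $\BQ(\Bt)$-basis, linear independence of the $q^{\pm}_{\Bla}(x;\Bt)$ over $\BQ(\Bt)$ is equivalent to $\det M(\Bt) \ne 0$ in $\BQ(\Bt)$, i.e. to $\det M(\Bt)$ being a nonzero element of $\BZ[\Bt]$.

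To see that $\det M(\Bt)$ is nonzero, I would specialize $t_1 = \cdots = t_r = t$. As the entries $c_{\Bla,\Bmu}(\Bt)$ are honest polynomials, the ring homomorphism $\BZ[\Bt] \to \BZ[t]$, $t_i \mapsto t$, commutes with the determinant, so $\det M(\Bt)|_{t_i = t} = \det M'(t)$, where $M'(t)$ expresses the specialized functions in the basis $\{ m_{\Bmu} \}$. By the remark preceding the lemma, $q^{\pm}_{\Bla}(x;\Bt)|_{t_i=t} = q^{\pm}_{\Bla}(x;t)$, the one-variable function of [S1, 2.4], and by [S1, (4.7.2)] the family $\{ q^{\pm}_{\Bla}(x;t) \mid \Bla \in \SP_{n,r} \}$ is a $\BQ(t)$-basis of $\Xi^n_{\BQ}(t)$. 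Thus $M'(t)$ is invertible over $\BQ(t)$, so $\det M'(t)$ is a nonzero element of $\BQ(t)$, hence a nonzero polynomial in $t$. Since a zero polynomial in $\BZ[\Bt]$ would specialize to zero under $t_i \mapsto t$, and $\det M(\Bt)$ does not, it cannot vanish. This gives the required linear independence, and the lemma follows.

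The argument carries essentially no obstacle beyond bookkeeping: the genuine content sits in the one-variable basis result [S1, (4.7.2)], which I am permitted to assume. The only point that needs care is that the expansion coefficients lie in $\BZ[\Bt]$ rather than merely in $\BQ(\Bt)$, so that $t_i \mapsto t$ is a ring homomorphism compatible with $\det$; this integrality is exactly what is guaranteed by $q^{\pm}_{\Bla}(x;\Bt) \in \Xi^n[\Bt]$, established above from the generating-function identity (2.5.2).
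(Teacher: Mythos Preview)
Your proof is correct and follows essentially the same approach as the paper: expand $q^{\pm}_{\Bla}(x;\Bt)$ in a fixed $\BZ$-basis of $\Xi^n$ to get a transition matrix with entries in $\BZ[\Bt]$, specialize $t_1=\cdots=t_r=t$, and invoke the one-variable result [S1, (4.7.2)] to conclude the determinant is nonzero. The only cosmetic difference is that the paper uses the Schur basis $\{s_{\Bmu}\}$ where you use the monomial basis $\{m_{\Bmu}\}$, which is immaterial.
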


\begin{proof}
Since $\{ s_{\Bmu}(x) \mid \Bmu \in \SP_{n,r}\}$ is a $\BZ[\Bt]$-basis of 
$\Xi^n[\Bt]$ and $q_{\Bla}^{\pm}(\Bt) \in \Xi^n[\Bt]$, 
$q^{\pm}_{\Bla}(x;\Bt)$ can be written as a linear combination of
$s_{\Bmu}(x)$.  Let $A(\Bt) = (a_{\Bla,\Bmu}(\Bt))$ be the corresponding matrix
with $a_{\Bla,\Bmu}(\Bt) \in \BZ[\Bt]$.
Let $A(t)$ be the matrix obtained from $A(\Bt)$ by putting $t_1 = \dots = t_r = t$.  
Then $A(t)$ is a non-singular matrix by the above remark.  
Hence $A(\Bt)$ is also non-singular, and 
$q^{\pm}_{\Bla}(x;\Bt)$ gives a basis of $\Xi^n_{\BQ}(\Bt)$. 
\end{proof}

\para{2.7.}
We consider two types 
of (infinitely many) variables $x = (x^{(1)}, \dots, x^{(r)})$ and 
$y = (y^{(1)}, \dots, y^{(r)})$, and put  
\begin{equation*}
\Om(x,y;\Bt) = \prod_{k \in \BZ/r\BZ}\prod_{i,j}
               \frac{1 - t_kx^{(k)}_iy^{(k+1)}_j}{1 - x^{(k)}_iy^{(k)}_j}.
\end{equation*}

The following formula is a multi-parameter version of [S1, (2.5.1)].  
The proof is done by an entirely similar way, and we omit it. 

\begin{prop}   %%%%   Prop. 2.8
Under the notation above, we have

\begin{align*}
\tag{2.8.1}
\Om(x,y;\Bt) &= \sum_{\Bla}q_{\Bla}^+(x;\Bt)m_{\Bla}(y)
            = \sum_{\Bla}m_{\Bla}(x)q^-_{\Bla}(y;\Bt). 
\end{align*}
\end{prop}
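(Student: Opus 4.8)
The plan is to expand the infinite product $\Om(x,y;\Bt)$ one factor at a time by means of the generating function (2.5.2), and to obtain the two stated expressions by grouping the factors of $\Om$ in two different ways: first around the $y$-variables, then around the $x$-variables. Throughout, everything is a formal power series that is homogeneous of bounded degree in each variable, so the rearrangements below are legitimate, and the stability properties of $q^{\pm}_{\Bla}$ and $m_{\Bla}$ recorded in 2.4--2.5 let us pass from the finitely many variables $x_{\SM}$ to $x$ at the end.

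For the first equality I would fix a $y$-variable $y^{(k)}_j$ and collect all the factors of $\Om$ in which it occurs. It appears only in the denominators $1 - x^{(k)}_iy^{(k)}_j$ coming from the $k$-th factor and in the numerators $1 - t_{k-1}x^{(k-1)}_iy^{(k)}_j$ coming from the $(k-1)$-st factor; collecting these and writing $u = y^{(k)}_j$ gives
\[
\prod_i \frac{1 - t_{k-1}x^{(k-1)}_i u}{1 - x^{(k)}_i u},
\]
which by the ``$+$''-case of (2.5.2) with $t = t_{k-1}$ equals $\sum_{s \ge 0} q^{(k)}_{s,+}(x;t_{k-1})(y^{(k)}_j)^s$. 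Hence $\Om(x,y;\Bt) = \prod_{k}\prod_{j}\sum_{s \ge 0} q^{(k)}_{s,+}(x;t_{k-1})(y^{(k)}_j)^s$. For each fixed $k$, I would then expand the product over $j$ and collect terms: since $q^{(k)}_{s,+}$ depends only on $s$ and $q^{(k)}_{0,+}=1$, a sequence of exponents $(s_1,s_2,\dots)$ whose nonzero parts form the partition $\la^{(k)}$ contributes $\prod_i q^{(k)}_{\la^{(k)}_i,+}(x;t_{k-1})$ times the monomial $\prod_j (y^{(k)}_j)^{s_j}$, and summing these monomials over all distinct rearrangements produces exactly $m_{\la^{(k)}}(y^{(k)})$. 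Taking the product over $k \in \BZ/r\BZ$ and recognizing $q^+_{\Bla}(x;\Bt) = \prod_k\prod_i q^{(k)}_{\la^{(k)}_i,+}(x;t_{k-1})$ via (2.5.3) (the ``$+$''-case, where $c=1$) together with $m_{\Bla}(y) = \prod_k m_{\la^{(k)}}(y^{(k)})$ yields $\Om(x,y;\Bt) = \sum_{\Bla} q^+_{\Bla}(x;\Bt)m_{\Bla}(y)$.

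For the second equality I would argue symmetrically, grouping the factors of $\Om$ around the $x$-variables. A fixed $x^{(k)}_i$ occurs only in the denominators $1 - x^{(k)}_iy^{(k)}_j$ and the numerators $1 - t_kx^{(k)}_iy^{(k+1)}_j$ of the $k$-th factor, so with $u = x^{(k)}_i$ the collected factors give $\prod_j \frac{1 - t_k u\,y^{(k+1)}_j}{1 - u\,y^{(k)}_j}$, which by the ``$-$''-case of (2.5.2) applied to the $y$-variables (with $t = t_k$) equals $\sum_{s \ge 0} q^{(k)}_{s,-}(y;t_k)(x^{(k)}_i)^s$. The same collection-into-monomials argument, now with $c=0$ in (2.5.3), produces $\Om(x,y;\Bt) = \sum_{\Bla} m_{\Bla}(x)q^-_{\Bla}(y;\Bt)$.

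The step requiring the most care is the bookkeeping of the cyclic indices $k \in \BZ/r\BZ$ and the parameter shifts: one must keep straight that grouping around $y$ forces the shift $t_{k-1}$ (hence the value $c=1$ in the definition of $q^+_{\Bla}$), while grouping around $x$ forces $t_k$ (hence $c=0$ in $q^-_{\Bla}$), and one must check that each $y^{(k)}_j$, resp. $x^{(k)}_i$, really meets only the two advertised families of factors. The combinatorial passage from a sum over exponent sequences to $m_{\la}$, and the verification of stability under $m\to\infty$, are routine once these index conventions are pinned down, which is precisely why the proof parallels that of [S1, (2.5.1)].
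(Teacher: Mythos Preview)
Your proof is correct and is precisely the argument the paper has in mind: the paper omits the proof entirely, noting only that it is ``done by an entirely similar way'' to [S1, (2.5.1)], and your expansion---grouping the factors of $\Om$ around each $y^{(k)}_j$ (respectively $x^{(k)}_i$), applying (2.5.2), and collecting monomials into $m_{\Bla}$---is exactly that argument spelled out in detail, with the index and parameter bookkeeping handled correctly.
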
  

\remark{2.9.}
In the one-parameter case, another expression of $\Om(x,y;t)$ involving power sum symmetric 
functions $p_{\Bla}(x)$ was proved in [S1, (2.5.2)].  However, we don't have a generalization 
of (2.5.2) in [S1] in the multi-parameter case.

\para{2.10.}
We define a non-degenerate bilinear form  
$\lp \ , \ \rp : \Xi^n_{\BQ}(\Bt) \times \Xi^n_{\BQ}(\Bt) \to \BQ(\Bt)$ by 

\begin{equation*}
\tag{2.10.1}
\lp q^+_{\Bla}(x;\Bt), m_{\Bmu}(x) \rp = \d_{\Bla,\Bmu}
\end{equation*}
for $\Bla, \Bmu \in \SP_{n,r}$. 
By using a similar argument as in [M, I,4], (2.8.1) implies that

\begin{equation*}
\tag{2.10.2}
\lp m_{\Bla}(x), q^-_{\Bmu}(x;\Bt) \rp = \d_{\Bla,\Bmu}.
\end{equation*}

\par
Let $\SA$ be the $\BQ$-subalgebra of $\BQ(\Bt)$ consisting of rational functions 
$f/g$ such that $g(\bold 0) \ne 0$, where $\bold 0 = (0, \dots, 0)$. 
Put $\SA \otimes_{\BZ}\Xi^n = \Xi^n_{\SA}$.
Then $\Xi^n_{\SA}|_{\Bt = \bold 0} = \Xi^n_{\BQ}$. 
By a similar argument as in [M, I,4], one can show 

\begin{equation*}
\tag{2.10.3}
\Om(x,y; \bold 0) = \prod_{k \in \BZ/r\BZ}\prod_{i,j}(1 - x^{(k)}_iy^{(k)}_j)\iv
                  = \sum_{\Bla}s_{\Bla}(x)s_{\Bla}(y). 
\end{equation*}
Hence if we define the symmetric bilinear form $\lp \ , \ \rp_{\,0}$ on $\Xi^n_{\BQ}$ by 
$\lp s_{\Bla}, s_{\Bmu} \rp_{\, 0} = \d_{\Bla, \Bmu}$ for $\Bla, \Bmu \in \SP_{n,r}$, 
the restriction of $\lp \ , \ \rp$ on $\Xi^n_{\SA}$ gives rise to the form 
$\lp \ , \ \rp_{\, 0}$ on $\Xi^n_{\BQ}$ by taking $\Bt \mapsto \bold 0$. 
\para{2.11.}
For $\Bla = (\la^{(1)}, \dots, \la^{(r)}) \in \SP_{n,r}$ with 
$\la^{(k)} = (\la^{(k)}_1, \dots, \la^{(k)}_m)$, we define 
$c(\Bla) \in \BZ_{\ge 0}^{rm}$ by 

\begin{equation*}
c(\Bla) = (\la^{(1)}_1, \dots, \la^{(r)}_1, \la^{(1)}_2, \dots, \la^{(r)}_2, 
           \dots, \la^{(1)}_m, \dots, \la^{(r)}_m ).
\end{equation*}
\par\medskip
\noindent
We define a partial order on $\SP_{n,r}$ by the condition, for $\Bla, \Bmu \in \SP_{n,r}$,  
$\Bla \le \Bmu$ if $c(\Bla) \le c(\Bmu)$ with respect to the dominance order on $\BZ^{rm}$. 
The partial order $\Bla \le \Bmu$ is called the dominance order on $\SP_{n,r}$.  
\par
In the remainder of this section, we fix a total order $\Bla \lve \Bmu$ on $\SP_{n,r}$
which is compatible with the dominance order $\Bla \le \Bmu$. 
\par
By making use of the bilinear form $\lp \ , \ \rp$, we shall construct
Hall-Littlewood functions with multi-parameter $P^{\pm}_{\Bla}(x;\Bt)$.
The following result is an analogue of [S1, Proposition 4.8].

\begin{prop}   %%%%  Prop 2.12 
For each $\Bla \in \SP_{n,r}$, there exists a unique function 
$P^{\pm}_{\Bla}(x;\Bt) \in \Xi^n_{\SA}$
satisfying the following properties. 
\begin{enumerate}
\item 
$P^{\pm}_{\Bla}(x;\Bt)$ can be expressed as 

\begin{equation*}
P^{\pm}_{\Bla}(x;\Bt) = s_{\Bla}(x) + \sum_{\Bmu \lv \Bla}u^{\pm}_{\Bla,\Bmu}(\Bt)s_{\Bmu}(x),
\end{equation*}
where $u^{\pm}_{\Bla,\Bmu}(\Bt) \in \SA$. 
\item
$\lp P^+_{\Bla}, P^-_{\Bmu} \rp = 0$ if $\Bla \ne \Bmu$. 
\item 
$P^{\pm}_{\Bla}(x; \bold 0) = s_{\Bla}(x)$. 
\end{enumerate}
\end{prop}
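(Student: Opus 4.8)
The plan is to reformulate conditions (i) and (ii) as a single triangular factorization of the Gram matrix of $\lp\ ,\ \rp$ in the Schur basis, and then to solve that factorization over the local ring $\SA$ by exploiting the specialization (2.10.3).

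First I would fix the total order $\lve$ and form the $N\times N$ matrix $G=(g_{\Bla,\Bmu})$ with $g_{\Bla,\Bmu}=\lp s_\Bla, s_\Bmu\rp$, its rows and columns indexed by $\SP_{n,r}$ listed in increasing order for $\lve$ (here $N=\sharp\SP_{n,r}$). By the discussion in 2.10 the form restricts to a map $\Xi^n_\SA\times\Xi^n_\SA\to\SA$, so $g_{\Bla,\Bmu}\in\SA$; moreover (2.10.3) shows that setting $\Bt=\bold 0$ gives $g_{\Bla,\Bmu}|_{\Bt=\bold 0}=\d_{\Bla,\Bmu}$, that is, $G(\bold 0)$ is the identity matrix. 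A function of the form (i) is $P^\pm_\Bla=s_\Bla+\sum_{\Bnu\lv\Bla}u^\pm_{\Bla,\Bnu}s_\Bnu$; collecting the coefficients into lower unitriangular matrices $U^+,U^-$ over $\SA$, bilinearity gives $\lp P^+_\Bla, P^-_\Bmu\rp=(U^+\,G\,(U^-)^T)_{\Bla,\Bmu}$. Hence (i)--(ii) together are equivalent to requiring that $U^+G(U^-)^T=D$ be diagonal with $U^\pm$ lower unitriangular, and this in turn is exactly the $LDU$ factorization $G=LDU$ with $L=(U^+)^{-1}$ lower unitriangular, $U=((U^-)^{-1})^T$ upper unitriangular, and $D$ diagonal. (Since the form $\lp\ ,\ \rp$ need not be symmetric, $G$ is a general matrix and the two families $P^+,P^-$ genuinely differ; the $LDU$ factorization, rather than a symmetric one, is what is adapted to this situation.)

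It then remains to produce this factorization over $\SA$ and to check uniqueness. The key point is that every leading principal minor $\Delta_k$ of $G$ (taken with respect to the chosen total order) lies in $\SA$ and, because $G(\bold 0)=I$, satisfies $\Delta_k(\bold 0)=\det(I)=1$. Since an element of $\SA$ is a unit exactly when its value at $\bold 0$ is nonzero, each $\Delta_k$ is invertible in $\SA$. With all leading principal minors invertible, the standard Gaussian-elimination construction of the $LDU$ factorization goes through verbatim over the commutative ring $\SA$ and yields a unique triple $(L,D,U)$ with $L,U$ unitriangular and $D$ diagonal over $\SA$; the diagonal entries of $D$ are the ratios $\Delta_k/\Delta_{k-1}$ of consecutive minors, hence are units. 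Reading off $U^+=L^{-1}$ and $U^-=(U^{-1})^T$, both lower unitriangular over $\SA$, defines the desired $P^\pm_\Bla\in\Xi^n_\SA$ satisfying (i) and (ii), with the nonzero diagonal values $\lp P^+_\Bla, P^-_\Bla\rp=D_{\Bla,\Bla}$.

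For uniqueness and for (iii) I would argue as follows. Any pair of families satisfying (i)--(ii) produces, via the equivalence above, an $LDU$ factorization of $G$; since that factorization is unique once the leading minors are invertible, the matrices $U^+,U^-$, and hence $P^+_\Bla$ and $P^-_\Bla$, are already determined by (i)--(ii) alone. Finally, specializing $G=LDU$ at $\Bt=\bold 0$ gives $I=L(\bold 0)D(\bold 0)U(\bold 0)$, and the uniqueness of the $LDU$ factorization of the identity forces $L(\bold 0)=D(\bold 0)=U(\bold 0)=I$; therefore $U^\pm(\bold 0)=I$ and $P^\pm_\Bla(x;\bold 0)=s_\Bla(x)$, which is (iii). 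The only genuine obstacle is the passage from $\BQ(\Bt)$ to $\SA$: over the fraction field a non-degenerate bilinear form need not admit an $LDU$ factorization without row interchanges, and even when it does the coefficients $u^\pm_{\Bla,\Bmu}$ need not be regular at $\bold 0$. It is precisely the normalization $G(\bold 0)=I$ coming from (2.10.3) that removes this difficulty, by forcing the pivots $\Delta_k$ to be units of $\SA$; this, rather than any computation, is the heart of the argument, and it is the multi-parameter analogue of the reasoning in [S1, Proposition 4.8].
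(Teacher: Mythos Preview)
Your argument is correct and is essentially the same as the paper's: the paper carries out an explicit Gram--Schmidt induction on the total order $\lve$, defining $d^{\pm}_{\Bla,\Bla'}=-\lp s_{\Bla},P^{\mp}_{\Bla'}\rp\,\lp P^{+}_{\Bla'},P^{-}_{\Bla'}\rp^{-1}$ and using $G(\bold 0)=I$ to see that each pivot $\lp P^{+}_{\Bla'},P^{-}_{\Bla'}\rp$ is a unit of $\SA$, which is exactly the row-by-row execution of the $LDU$ factorization you invoke. Your matrix-factorization packaging makes the uniqueness and the specialization (iii) slightly more transparent, but the underlying mechanism---units of $\SA$ coming from (2.10.3)---is identical.
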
 

\begin{proof}
We prove the proposition following the discussion in [S1, Remark 4.9].
We construct $P^{\pm}_{\Bla}(x;\Bt)$ satisfying the properties (i), (ii), (iii) 
by induction on the total order $\lve$ on $\SP_{n,r}$. 
Let $\Bla_0 = (-; \dots;-; (1^n))$.  $\Bla_0$ is the minimum element in $\SP_{n,r}$ 
with respect to 
$\le$, and so the minimum element with respect to $\lve$.  
By (i), $P^{\pm}_{\Bla_0}(x;\Bt)$ must coincide with $s_{\Bla_0}(x)$, which clearly
satisfies (iii). 
Take $\Bla \in \SP_{n,r}$, and assume,  
for any $\Bla',\Bla'' \lv \Bla$, that $P^{+}_{\Bla'}, P^-_{\Bla''}$ 
satisfying (i), (iii)  and 
$(\text{ii})'$ : $\lp P^+_{\Bla'}, P^-_{\Bla''} \rp = 0$ 
for $\Bla''\ne \Bla'$, was constructed.
Note that the condition (i) for $P^{\pm}_{\Bla}$ is equivalent to the condition 
\begin{equation*}
\tag{2.12.1}
P^{\pm}_{\Bla} = s_{\Bla} + \sum_{\Bla' \lv \Bla}d^{\pm}_{\Bla,\Bla'}P^{\pm}_{\Bla'}
\end{equation*}
with $d^{\pm}_{\Bla,\Bla'} \in \SA$.   
By taking the inner product with $P^-_{\Bmu}$ ($\Bmu \lv \Bla$) in (2.12.1), 
we  have a relation 

\begin{equation*}
\lp P^+_{\Bla}, P^-_{\Bmu} \rp = 
          \lp s_{\Bla}, P^-_{\Bmu} \rp \ + \  
             d^+_{\Bla,\Bmu}\lp  P^+_{\Bmu}, P^-_{\Bmu} \rp.
\end{equation*}
\par\medskip
\noindent
By (iii) and by 2.10, 
$\lp P^+_{\Bmu}, P^-_{\Bmu}\rp|_{\Bt = \bold 0} = \lp s_{\Bmu}, s_{\Bmu}\rp_{\, 0} = 1$. 
In particular, $\lp P^+_{\Bmu}, P^-_{\Bmu}\rp \ne 0$.  Hence
if we define $P^+_{\Bla}$ as in (2.12.1) with 
$d^+_{\Bla,\Bla'} = 
       - \lp s_{\Bla}, P^-_{\Bla'}\rp \, \lp P^+_{\Bla'}, P^-_{\Bla'} \rp\iv \in \SA$,  
we have $\lp P^+_{\Bla}, P^-_{\Bmu} \rp = 0$ for any $\Bmu \lv \Bla$. 
Since $\lp s_{\Bla}, P^-_{\Bla'}\rp |_{\Bt = \bold 0} = 
         \lp s_{\Bla}, s_{\Bla'} \rp_{\, 0} = 0$, 
we have $d^+_{\Bla, \Bla'}(\bold 0) = 0$. 
It follows that $P^+_{\Bla}(x;\bold 0) = s_{\Bla}(x)$. 
A similar argument shows, if we define $P^-_{\Bla}$ as in (2.12.1) with 
$d^-_{\Bla, \Bla'} = 
      - \lp P^+_{\Bla'}, s_{\Bla} \rp \, \lp P^+_{\Bla'}, P^-_{\Bla'} \rp\iv \in \SA$,  
that $P^-_{\Bla}$ satisfies the required condition.   Thus one can construct 
$P^{\pm}_{\Bla}$ satisfying (i), (ii), (iii).   
The uniqueness is clear from the construction.
\end{proof}

\para{2.13.}
The discussion in the proof of Proposition 2.12 shows that 
$b_{\Bla}(\Bt)  = \lp P^+_{\Bla}, P^-_{\Bla}\rp \iv \in \BQ(\Bt) - \{ 0 \}$. 
We define $Q^{\pm}_{\Bla}(x;\Bt) \in \Xi^n_{\SA}$ by 

\begin{equation*}
\tag{2.13.1}
Q^{\pm}_{\Bla}(x;\Bt) = b_{\Bla}(\Bt)P^{\pm}_{\Bla}(x;\Bt).
\end{equation*} 
Then we have 
\begin{equation*}
\tag{2.13.2}
\lp P^+_{\Bla}, Q^-_{\Bmu} \rp =  \lp Q^+_{\Bla}, P^-_{\Bmu}\rp = \d_{\Bla, \Bmu}. 
\end{equation*}
The sets $\{ P^{\pm}_{\Bla} \mid \Bla \in \SP_{n,r} \}$, 
$\{ Q^{\pm}_{\Bla} \mid \Bla \in \SP_{n,r}\}$ give $\BQ(\Bt)$-bases of $\Xi^n_{\BQ}(\Bt)$. 
$P^{\pm}_{\Bla}, Q^{\pm}_{\Bla}$ are called Hall-Littlewood functions with 
multi-parameter. 
Note that the formula (2.13.2) can be interpreted by using $\Om(x,y;\Bt)$ as follows;

\begin{align*}
\tag{2.13.3}
\Om(x,y;\Bt) &= \sum_{\Bla}b_{\Bla}(\Bt)P^+_{\Bla}(x;\Bt)P^-_{\Bla}(y;\Bt), \\
\tag{2.13.4}
\Om(x,y;\Bt) &=  \sum_{\Bla}P^+_{\Bla}(x;\Bt)Q^-_{\Bla}(y;\Bt) 
                = \sum_{\Bla}Q^+_{\Bla}(x;\Bt)P^-_{\Bla}(y;\Bt). 
\end{align*}

The following result gives a characterization of $P^{\pm}_{\Bla}$ and $Q^{\pm}_{\Bla}$.

\begin{thm}   %%%%  Theorem 2.14
Let $\ve \in \{ +,-\}$.  For each $\Bla \in \SP_{n,r}$, there exists a unique function 
$P^{\ve}_{\Bla}(x;\Bt)$ satisfying the following properties. 
\begin{enumerate}
\item 
$P^{\ve}_{\Bla}(x;\Bt)$ can be expressed as
\begin{equation*}
P^{\ve}_{\Bla}(x;\Bt) = \sum_{\Bmu \gve \Bla}c_{\Bla,\Bmu}(\Bt)q^{\ve}_{\Bmu}(x;\Bt),
\end{equation*}
where $c_{\Bla,\Bmu}(\Bt) \in \BQ(\Bt)$ with $c_{\Bla,\Bla}(\Bt) \ne 0$. 
\item
$P^{\ve}_{\Bla}(x;\Bt)$ can be expressed as 
\begin{equation*}
P^{\ve}_{\Bla}(x;\Bt) = \sum_{\Bmu \lve \Bla}u_{\Bla,\Bmu}(\Bt)s_{\Bmu}(x),
\end{equation*}
where $u_{\Bla,\Bmu}(\Bt) \in \BQ(\Bt)$ with $u_{\Bla,\Bla}(\Bt) = 1$. 
\end{enumerate}
A similar property holds also for $Q^{\ve}_{\Bla}(x;\Bt)$ by replacing the condition 
for $c_{\Bla,\Bmu}(\Bt), u_{\Bla,\Bmu}(\Bt)$ 
by $c_{\Bla,\Bla}(\Bt) = 1$ and $u_{\Bla,\Bla}(\Bt) \ne 0$. 
\end{thm}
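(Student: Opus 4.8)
The plan is to obtain property (ii) for free from the construction in Proposition 2.12, to derive property (i) from the duality pairings (2.10.1)--(2.10.2) combined with the orthogonality in Proposition 2.12(ii), and finally to deduce uniqueness by a triangularity argument carried out in the $P^{\ve}$-basis.

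Property (ii) should require no new work: Proposition 2.12(i) already gives $P^{\ve}_{\Bla} = s_{\Bla} + \sum_{\Bmu \lv \Bla}u^{\ve}_{\Bla,\Bmu}(\Bt)s_{\Bmu}$, which is precisely (ii) with $u_{\Bla,\Bla}(\Bt) = 1$. Before turning to (i) I would first record the unitriangularity of Schur functions in the monomial basis, $s_{\Bla}(x) = \sum_{\Bmu \le \Bla}M_{\Bla,\Bmu}\,m_{\Bmu}(x)$ with $M_{\Bla,\Bla} = 1$. Since $s_{\Bla} = \prod_k s_{\la^{(k)}}$ and $m_{\Bla} = \prod_k m_{\la^{(k)}}$, this reduces to the classical factorwise statement $s_{\la^{(k)}} = \sum_{\mu^{(k)} \le \la^{(k)}}(\ast)\,m_{\mu^{(k)}}$ together with the elementary observation that $\mu^{(k)} \le \la^{(k)}$ for every $k$ forces $c(\Bmu) \le c(\Bla)$; the latter is a short computation comparing partial sums of $c(\Bla)$ and $c(\Bmu)$, showing that factorwise dominance is compatible with the interleaved dominance order of 2.11.

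For property (i) I would treat $\ve = +$ first. Writing $P^{+}_{\Bla} = \sum_{\Bmu}c_{\Bla,\Bmu}(\Bt)\,q^{+}_{\Bmu}$ and pairing against $m_{\Bmu}$, the duality (2.10.1) gives $c_{\Bla,\Bmu}(\Bt) = \lp P^{+}_{\Bla}, m_{\Bmu}\rp$. Composing Proposition 2.12(i) with the unitriangularity above, each $P^{-}_{\Bnu}$ is unitriangular in the monomial basis, $P^{-}_{\Bnu} = \sum_{\Bxi \le \Bnu}(\ast)\,m_{\Bxi}$, and inverting this unitriangular system expresses $m_{\Bmu} = \sum_{\Bnu \le \Bmu}(\ast)\,P^{-}_{\Bnu}$. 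Substituting and invoking the orthogonality of Proposition 2.12(ii) leaves only the term $\Bnu = \Bla$, whence $c_{\Bla,\Bmu}(\Bt) = 0$ unless $\Bla \le \Bmu$, and $c_{\Bla,\Bla}(\Bt) = \lp P^{+}_{\Bla}, P^{-}_{\Bla}\rp = b_{\Bla}(\Bt)\iv \neq 0$ by 2.13. As $\Bla \le \Bmu$ implies $\Bla \lve \Bmu$, this is exactly (i). The case $\ve = -$ should run identically, using (2.10.2) in place of (2.10.1), pairing $m_{\Bmu}$ against $P^{-}_{\Bla}$ and expanding $m_{\Bmu}$ in the $P^{+}$-basis.

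For uniqueness, suppose $f$ satisfies (i) and (ii). Since each $P^{\ve}_{\Bmu}$ is itself triangular in the $q^{\ve}$-basis with nonzero diagonal, the families $\{q^{\ve}_{\Bmu} \mid \Bmu \gve \Bla\}$ and $\{P^{\ve}_{\Bmu} \mid \Bmu \gve \Bla\}$ span the same subspace, so (i) places $f$ in the span of $\{P^{\ve}_{\Bmu} \mid \Bmu \gve \Bla\}$; dually, the unitriangularity in the $s$-basis lets (ii) place $f$ in $P^{\ve}_{\Bla}$ plus the span of $\{P^{\ve}_{\Bmu} \mid \Bmu \lv \Bla\}$. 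Comparing $P^{\ve}$-coordinates, the support of $f$ lies in $\{\Bmu \gve \Bla\} \cap \{\Bmu \lve \Bla\} = \{\Bla\}$ in the total order, with $P^{\ve}_{\Bla}$-coefficient $1$, forcing $f = P^{\ve}_{\Bla}$. The statement for $Q^{\ve}_{\Bla} = b_{\Bla}(\Bt)P^{\ve}_{\Bla}$ then follows by rescaling, the diagonal coefficients becoming $c_{\Bla,\Bla}(\Bt) = 1$ and $u_{\Bla,\Bla}(\Bt) = b_{\Bla}(\Bt) \neq 0$. I expect the main obstacle to be property (i): one must keep the two pairings (2.10.1) and (2.10.2) correctly matched to the ``$+$'' and ``$-$'' cases against the single orthogonality relation of Proposition 2.12(ii), and must confirm that the factorwise dominance triangularity of $s$ in $m$ indeed survives passage to the interleaved dominance order on $\SP_{n,r}$.
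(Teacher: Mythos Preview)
Your proposal is correct and follows essentially the same route as the paper: the paper phrases the key step via transition matrices and the two expansions of $\Om(x,y;\Bt)$ (showing ${}^tB_-A_+$ is diagonal), while you phrase the equivalent computation entrywise through the bilinear form, but both rest on the same three ingredients --- the unitriangularity of $P^{\ve}$ in $s$ from Proposition~2.12, the unitriangularity of $s$ in $m$, and the duality/orthogonality relations (2.10.1)--(2.10.2) and Proposition~2.12(ii). One small notational slip: your inversions ``$m_{\Bmu} = \sum_{\Bnu \le \Bmu}(\ast)P^{\mp}_{\Bnu}$'' should use the total order $\lve$ rather than the dominance order $\le$, since Proposition~2.12(i) is stated for $\lve$; this does not affect the argument.
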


\begin{proof}
For bases $u = \{ u_{\Bla} \}, v = \{ v_{\Bla} \}$ of $\Xi^n_{\BQ}(\Bt)$, we denote by 
$M = M(u,v)$ the transition matrix $(m_{\Bla,\Bmu})$ of two bases, where 
$u_{\Bla} = \sum_{\Bmu \in \SP_{n,r}}m_{\Bla,\Bmu}v_{\Bmu}$.  
Consider the bases $P^{\pm} = \{ P^{\pm}_{\Bla}\}, q^{\pm} = \{ q^{\pm}_{\Bla}\}, 
s = \{ s_{\Bla}\}, m = \{ m_{\Bla} \}$ of $\Xi^n_{\BQ}(\Bt)$, and put 
\begin{equation*}
A_{\pm} = M(q^{\pm}, P^{\pm}), \quad B_{\pm} = M(m, P^{\pm}).
\end{equation*}
We want to show that $A_{\pm}$ is upper triangular.  
By Proposition 2.12, $M(P^{\pm}, s)$ is lower unitriangular. On the other hand, 
since the total order $\lve$ is compatible with the dominance order $\le$ on $\SP_{n,r}$, 
$M(s,m)$ is lower unitriangular (the verification is reduced to the case where $r = 1$, in which
case, it is well-known).   
Thus $B_{\pm}\iv = M(P^{\pm}, s)M(s,m)$ is lower unitriangular, and $B_{\pm}$ is 
also lower unitriangular. 
Put $D_{\pm} = {}^tB_{\mp}A_{\pm}$.  
If we put $A_+ = (A^+_{\Bla, \Bmu}), B_- = (B^-_{\Bla, \Bmu})$, we have, 
by (2.8.1) and (2.13.3),

\begin{align*}
\sum_{\Bnu}q^+_{\Bnu}(x;\Bt)m_{\Bnu}(y) 
              &= \sum_{\Bmu, \Bmu', \Bnu}A^+_{\Bnu, \Bmu}B^-_{\Bnu,\Bmu'}
                           P^+_{\Bmu}(x;\Bt)P^-_{\Bmu'}(y;\Bt)  \\
              &= \sum_{\Bla}b_{\Bla}(\Bt)P^+_{\Bla}(x;\Bt)P^-_{\Bla}(y;\Bt).
\end{align*}
Since $P^+_{\Bmu}(x;\Bt)P^-_{\Bmu'}(y;\Bt)$ are linearly independent, this implies that 
$D_+ = {}^tB_-A_+$ is a diagonal matrix with $\Bla\Bla$-entry $b_{\Bla}(\Bt)$.  
Hence $A_+$ is upper triangular with $A^+_{\Bla\Bla} = b_{\Bla}(\Bt)$. 
A similar argument, by using the formula for $\sum m_{\Bnu}(x)q^-_{\Bnu}(y;\Bt)$ in (2.8.1), 
shows that $A_-$ is upper triangular with $A^-_{\Bla\Bla} = b_{\Bla}(\Bt)$. 
Thus $P^{\pm}_{\Bla}(x;\Bt)$ satisfies the conditions (i) and (ii). 
\par
Next we show the uniqueness of $P^{\pm}_{\Bla}$. 
Take $\ve \in \{ +,-\}$, and assume that $R$ satisfies the condition (i) and (ii)
for $\ve$. By (i) and (ii) for $P^{\ve}_{\Bla}$, one can write as 
\begin{align*}
s_{\Bmu}(x) &= P^{\ve}_{\Bmu}(x;\Bt) + \sum_{\Bmu' \lv \Bmu}
       u'_{\Bmu,\Bmu'}(\Bt)P^{\ve}_{\Bmu'}(x;\Bt), \\
q^{\ve}_{\Bmu}(x;\Bt) &= \sum_{\Bmu' \gve \Bmu}
       c'_{\Bmu,\Bmu'}(\Bt)P^{\ve}_{\Bmu'}(x;\Bt).
\end{align*}
It follows that 

\begin{align*}
R(x;\Bt) &= P^{\ve}_{\Bla}(x;\Bt) + \sum_{\Bmu'' \lv \Bla}u''_{\Bla,\Bmu''}(\Bt)P^{\ve}_{\Bmu''}(x;\Bt), \\
R(x;\Bt) &= \sum_{\Bmu'' \gve \Bla}c''_{\Bla,\Bmu''}(\Bt)P^{\ve}_{\Bmu''}(x;\Bt). 
\end{align*}
Hence  $R = P^{\ve}_{\Bla}$, and the uniqueness follows. 
This proves the theorem for $P^{\ve}_{\Bla}(x;\Bt)$. 
\par
The above discussion shows that $c_{\Bla,\Bla}(\Bt)$ (for $P^{\ve}_{\Bla}$) 
coincides with $b_{\Bla}(\Bt)\iv$. Thus by multiplying $b_{\Bla}(\Bt)$ on both sides of 
(i) and (ii), we obtain the corresponding formulas for $Q^{\ve}_{\Bla}(x;\Bt)$.   
The theorem is proved. 
\end{proof}

\para{2.15.}
Since $\{ P^{\ve}_{\Bla}(x;\Bt) \mid \Bla \in \SP_{n,r}\}$ and 
$\{ s_{\Bla}(x) \mid \Bla \in \SP_{n,r}\}$ are bases of 
$\Xi^n_{\BQ}(\Bt)$, there exist unique  functions
$K^{\pm}_{\Bla,\Bmu}(\Bt) \in \BQ(\Bt)$ satisfying the properties

\begin{equation*}
\tag{2.15.1}
s_{\Bla}(x) = \sum_{\Bmu \in \SP_{n,r}}K^{\pm}_{\Bla,\Bmu}(\Bt)P^{\pm}_{\Bmu}(x;\Bt).
\end{equation*}
$K^{\pm}_{\Bla,\Bmu}(\Bt) \in \BQ(\Bt) = \BQ(t_1, \dots, t_r)$ are called 
multi-variable Kostka functions.
By definition, $K^{\pm}_{\Bla,\Bmu} = 0$ unless $\Bla \gve \Bmu$. 
\par
Put $\Bt_1 = (t, \dots, t)$, and let $\SA_1$ be the $\BQ$-subalgebra of $\BQ(\Bt)$ 
consisting of rational functions $f/g$ such that $g(\Bt_1) \ne 0$. 
Put $\Xi^n_{\SA_1} = \SA_1\otimes_{\BZ}\Xi^n$.  We have 
$\Xi^n_{\SA_1}|_{\Bt = \Bt_1} = \Xi^n_{\BQ}(t)$. 
We have
\begin{equation*}
\Om(x,y; \Bt)|_{\Bt = \Bt_1} = \Om(x,y; t) 
     = \prod_{k \in \BZ/r\BZ}\prod_{i,j}
         \frac{1 - tx^{(k)}_iy^{(k+1)}_j}{1 - x^{(k)}_iy^{(k)}_j}.
\end{equation*}
Thus if we define a bilinear form $\lp \ , \ \rp_{\, 1}$ on $\Xi^n_{\BQ}(t)$ by 
\begin{equation*}
\lp q^+_{\Bla}(x;t), m_{\Bmu}(x) \rp_{\, 1} = \d_{\Bla,\Bmu},
\end{equation*}
[S1, Proposition 2.5] implies that the restriction of $\lp \ , \ \rp$ on $\Xi^n_{\SA_1}$ 
induces the form $\lp \ , \ \rp_{\, 1}$ on $\Xi^n_{\BQ}(t)$ by putting $\Bt = \Bt_1$.  
By comparing [S1, Proposition 4.8] with Proposition 2.12, we see that 
\begin{equation*}
\tag{2.15.2}
P^{\pm}_{\Bla}(x;\Bt_1) = P^{\pm}_{\Bla}(x;t), \quad
Q^{\pm}_{\Bla}(x;\Bt_1) = Q^{\pm}_{\Bla}(x;t), 
\end{equation*}
\par\medskip
\noindent
where $P^{\pm}_{\Bla}(x;t), Q^{\pm}_{\Bla}(x;t)$ are Hall-Littlewood functions 
defined in [S1, Theorem 4.4]. 
In particular, we have

\begin{equation*}
\tag{2.15.3}
K^{\pm}_{\Bla,\Bmu}(t, \dots, t) = K^{\pm}_{\Bla,\Bmu}(t),
\end{equation*}
where $K^{\pm}_{\Bla,\Bmu}(t)$ is the Kostka function (with one variable) defined 
in [S1, 5.2]. 

\remark{2.16.}
In the one-variable case, a simple algorithm of computing Kostka functions 
$K^{\pm}_{\Bla,\Bmu}(t)$ was given in [S1, Theorem 5.4] in connection with the representation 
theory of the complex reflection group $S_n\ltimes (\BZ/r\BZ)^n$. This formula is based 
on the formula (2.5.2) in [S1]. 
Since we don't have an analogous formula for the multi-parameter case, we don't know 
whether or not those Kostka functions $K^{\pm}_{\Bla,\Bmu}(\Bt)$ have a relationship with 
complex reflection groups as above.  
%%%%
%%%%
%%%%
\par\bigskip\medskip
\section{Comparison of Hall-Littlewood functions for different $r$}

\para{3.1.}
Let $\SP^a_{n,r}$ be the set of $\Bla = (\la^{(1)}, \dots, \la^{(r)}) \in \SP_{n,r}$
such that $\la^{(k)} = \emptyset$ for $k = 1, \dots, a$. 
We identify $\SP^a_{n,r}$ with $\SP_{n, r-a}$ by 
$\Bla \lra \Bla' = (\la^{(a+1)}, \dots, \la^{(r)})$.
We consider the variables $x' = (x^{(a+1)}, \dots, x^{(r)})$ and 
$\Bt' = (t'_{a+1}, \dots, t'_r)$.  Assume that $\Bla \in \SP^a_{n,r}$.  
One can consider Hall-Littlewood 
functions $P^{\pm}_{\Bla'}(x';\Bt')$ and $Q^{\pm}_{\Bla'}(x';\Bt')$ 
associated to those data.  In this section, we discuss the relationship 
between $P^{\pm}_{\Bla}(x;\Bt)$ and 
$P^{\pm}_{\Bla'}(x';\Bt')$, and also between $Q^{\pm}_{\Bla}(x;\Bt)$ and 
$Q^{\pm}_{\Bla'}(x';\Bt')$. 
\par  
First consider the case where $a = 1$, and put $x' = (x^{(2)}, \dots, x^{(r)})$.
We denote by ${q'}^{(k)}_{s,\pm}(x';t)$ the function of $x'$ corresponding to the 
function $q^{(k)}_{s,\pm}(x;t)$ of $x$ (note that $k \in \BZ/(r-1)\BZ$ for 
${q'}^{(k)}_{s,\pm}$).   
We have a lemma. 

\begin{lem}  %%%%  Lemma 3.2.

\begin{enumerate}
\item
${q'}^{(k)}_{s,+}(x';t_{k-1}) = q^{(k)}_{s, +}(x;t_{k-1})$ for $k = 3, \dots, r$, and  
\begin{equation*}
\tag{3.2.1}
{q'}^{(2)}_{s,+}(x'; t_1t_r) = \sum_{\substack{s',s'' \\ s' + s'' = s}}
       (t_1)^{s''}q^{(2)}_{s',+}(x;t_1)q^{(1)}_{s'',+}(x; t_r).
\end{equation*}

\item
${q'}^{(k)}_{s,-}(x';t_k) = q^{(k)}_{s,-}(x;t_k)$ for $k = 2, \dots, r-1$, and 
\begin{equation*}
\tag{3.2.2}
{q'}^{(r)}_{s,-}(x'; t_1t_r) = \sum_{\substack{s',s'' \\ s' + s'' = s}}
       (t_r)^{s''}q^{(r)}_{s',-}(x;t_r)q^{(1)}_{s'',-}(x; t_1).
\end{equation*}
\end{enumerate}
\end{lem}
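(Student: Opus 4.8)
The plan is to reduce both parts to the generating-function identity (2.5.2), since an equality of generating series in a formal variable $u$ is equivalent, coefficient by coefficient, to the asserted identities. The only genuinely delicate point is the bookkeeping of the cyclic index: in the primed system the variables are $x^{(2)}, \dots, x^{(r)}$ and the cyclic index runs over $\BZ/(r-1)\BZ$ realized on the labels $\{2, \dots, r\}$, so the predecessor (in the ``$+$''-case) of the label $2$ is $r$, not $1$, while the predecessor of $k$ for $k = 3, \dots, r$ is the ordinary $k-1$. Applying (2.5.2) in the primed system gives, for the ``$+$''-case,
\begin{equation*}
\sum_{s \ge 0}{q'}^{(k)}_{s,+}(x';t)u^s = \prod_{i=1}^m\frac{1 - tux_i^{(k-1)}}{1 - ux_i^{(k)}} \quad (k = 3, \dots, r), \qquad
\sum_{s \ge 0}{q'}^{(2)}_{s,+}(x';t)u^s = \prod_{i=1}^m\frac{1 - tux_i^{(r)}}{1 - ux_i^{(2)}}.
\end{equation*}

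For $k = 3, \dots, r$ the label $k-1$ lies in $\{2, \dots, r\}$ and coincides with the predecessor in the unprimed $\BZ/r\BZ$-setting; since $q^{(k)}_{s,+}(x;t)$ involves only $x^{(k)}$ and $x^{(k-1)}$ (no $x^{(1)}$), the two generating series agree as series in $u$ with identical coefficients. Comparing coefficients of $u^s$ after putting $t = t_{k-1}$ yields the first assertion of (i). The corresponding part of (ii) is identical after replacing each predecessor $k-1$ by the successor $k+1$ (the ``$\mp$'' in (2.5.1) now reading ``$+$''): for $k = 2, \dots, r-1$ one has $k+1 \in \{3, \dots, r\}$, which again agrees in both the primed and unprimed settings.

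The substantive step is the wrap-around identity (3.2.1). Using that in $\BZ/r\BZ$ the predecessor of $2$ is $1$ and the predecessor of $1$ is $r$, set
\begin{equation*}
F(u) = \sum_{s' \ge 0}q^{(2)}_{s',+}(x;t_1)u^{s'} = \prod_{i=1}^m\frac{1 - t_1ux_i^{(1)}}{1 - ux_i^{(2)}}, \qquad
G(u) = \sum_{s'' \ge 0}q^{(1)}_{s'',+}(x;t_r)u^{s''} = \prod_{i=1}^m\frac{1 - t_rux_i^{(r)}}{1 - ux_i^{(1)}}.
\end{equation*}
The extra factor $(t_1)^{s''}$ on the right of (3.2.1) amounts to replacing $u$ by $t_1u$ in $G$, so the generating series of the right-hand side is $F(u)G(t_1u)$, and a direct computation gives
\begin{equation*}
F(u)G(t_1u) = \prod_{i=1}^m\frac{(1 - t_1ux_i^{(1)})(1 - t_1t_rux_i^{(r)})}{(1 - ux_i^{(2)})(1 - t_1ux_i^{(1)})}
            = \prod_{i=1}^m\frac{1 - t_1t_rux_i^{(r)}}{1 - ux_i^{(2)}},
\end{equation*}
the factors $1 - t_1ux_i^{(1)}$ cancelling. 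This is precisely the generating series of ${q'}^{(2)}_{s,+}(x';t_1t_r)$ displayed above with $t = t_1t_r$, so comparing coefficients of $u^s$ proves (3.2.1). Formula (3.2.2) follows by the same computation with predecessors replaced by successors: there $F(u) = \sum_{s'}q^{(r)}_{s',-}(x;t_r)u^{s'} = \prod_i(1 - t_rux_i^{(1)})/(1 - ux_i^{(r)})$ and $G(u) = \sum_{s''}q^{(1)}_{s'',-}(x;t_1)u^{s''} = \prod_i(1 - t_1ux_i^{(2)})/(1 - ux_i^{(1)})$, and $F(u)G(t_ru)$ telescopes to $\prod_i(1 - t_1t_rux_i^{(2)})/(1 - ux_i^{(r)})$, which is the generating series of ${q'}^{(r)}_{s,-}(x';t_1t_r)$.

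I do not expect any genuine obstacle beyond the indexing: the only care needed is to observe that passing from $r$ to $r-1$ types alters the meaning of ``predecessor'' (resp.\ ``successor'') at exactly one boundary label, and that it is precisely this boundary that merges the two adjacent unprimed factors $q^{(2)},q^{(1)}$ (resp.\ $q^{(r)},q^{(1)}$) into a single primed factor through the telescoping cancellation of the $x^{(1)}$-factors.
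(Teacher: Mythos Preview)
Your proof is correct and follows essentially the same route as the paper: both arguments invoke the generating-function identity (2.5.2), observe that for the non-boundary labels the primed and unprimed generating series literally coincide, and for the boundary label multiply the two unprimed generating series (with the substitution $u \mapsto t_1u$, resp.\ $u \mapsto t_ru$) so that the $x^{(1)}$-factors cancel. Your write-up is somewhat more explicit about the cyclic bookkeeping, but there is no methodological difference.
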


\begin{proof}
We prove (i).  The first statement is clear from the definition.  Recall, 
by (2.5.2), that 
\begin{equation*}
\prod_{i = 1}^m\frac{1 - tux_i^{(k-1)}}{1 - ux_i^{(k)}} 
= \sum_{s \ge 0}q^{(k)}_{s,+}(x;t)u^s.
\end{equation*}
Since 
\begin{equation*}
\prod_{i = 1}^m\frac{1 - t_1ux_i^{(1)}}{1 - ux_i^{(2)}}
      \prod_{i=1}^m\frac{1 - t_rt_1ux_i^{(r)}}{1 - t_1ux_i^{(1)}}
         = \prod_{i=1}^m\frac{1 - t_1t_rux_i^{(r)}}{1 - ux_i^{(2)}},
\end{equation*}
we have 
\begin{equation*}
\biggl(\sum_{s' \ge 0}q^{(2)}_{s',+}(x;t_1)u^{s'}\biggr)
   \biggl(\sum_{s'' \ge 0}q^{(1)}_{s'',+}(x; t_r)(t_1u)^{s''}\biggr)
    = \sum_{s \ge 0}{q'}^{(2)}_{s,+}(x';t_1t_r)u^s.
\end{equation*}
Thus (i) holds.  The proof for (ii) is similar. 
\end{proof}

\para{3.3.}
By the map $\Bla' = (\la^{(2)}, \dots, \la^{(r)}) \mapsto 
\Bla = (-, \la^{(2)}, \dots, \la^{(r)})$,
we can identify 
$\SP_{n,r-1}$ with the subset $\SP^1_{n,r}$ of $\SP_{n,r}$.
Since the dominance order on $\SP_{n,r-1}$ is compatible with the dominance order of 
$\SP_{n,r}$, one can choose a total order on $\SP_{n,r}$ compatible with the total 
order on $\SP_{n,r-1}$, namely, which satisfies the property 
if $\Bla \in \SP^1_{n,r}$ and $\Bmu \lve \Bla$, then $\Bmu \in \SP^1_{n,r}$.  
More generally, by considering the sequence 
$\SP_{n, 1} \subset \SP_{n,2}\subset \cdots \subset \SP_{n,r}$,  
we can choose a total order on $\SP_{n,r}$
so that it is compatible with the total order on each subset $\SP_{n,k}$.   
\par
Assume that $\Bla \in \SP_{n,r}^1$. Then $\Bmu \in \SP_{n,r}^1$ for any $\Bmu \lve \Bla$,  
and $s_{\Bmu}(x) = s_{\Bmu'}(x')$ is a function with respect 
to the variable $x' = (x^{(2)}, \dots, x^{(r)})$. 
Since $Q^{\pm}_{\Bla}$ is a linear combination of $s_{\Bmu}$ with $\Bmu \in \SP_{n,r}^1$,
$Q^{\pm}_{\Bla}$ is a function with respect to the variable $x'$. We show  the following.

\begin{prop}  %%%%  Prop. 3.4
Assume that $\Bla \in \SP_{n,r}^1$.  Let ${Q'}^{\pm}_{\Bla'}$ be the 
function defined with respect to $x' = (x^{(2)}, \dots, x^{(r)})$, and 
$\Bla' \in \SP_{n,r-1}$, the element corresponding to $\Bla$.  Then 
\begin{align*}
Q^{\pm}_{\Bla}(x;t_1, \dots, t_r) = {Q'}^{\pm}_{\Bla'}(x'; t_2, \dots, t_{r-1}, t_1t_r). \\
\end{align*}
\end{prop}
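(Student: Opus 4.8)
The plan is to pin down ${Q'}^{\pm}_{\Bla'}(x';t_2,\dots,t_{r-1},t_1t_r)$ by the uniqueness in Theorem 2.14. Regarding this function as an element $R$ of $\Xi^n_{\BQ}(\Bt)$ (it is symmetric in $x'$, hence a fortiori in $x$, with coefficients in $\BQ(t_2,\dots,t_{r-1},t_1t_r)\subseteq\BQ(\Bt)$), I would verify that $R$ satisfies the two conditions of Theorem 2.14 characterizing $Q^{\pm}_{\Bla}(x;\Bt)$; uniqueness then forces $R=Q^{\pm}_{\Bla}$. I treat the ``$-$'' case, the ``$+$'' case being parallel. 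Throughout I use the identification $\SP_{n,r-1}\cong\SP^1_{n,r}$, $\Bmu'\lra\Bmu=(-,\mu^{(2)},\dots,\mu^{(r)})$, together with the fact (3.3) that the chosen total orders are compatible, so that $\Bmu'\lve\Bla'$ in $\SP_{n,r-1}$ is equivalent to $\Bmu\lve\Bla$ in $\SP_{n,r}$ (which in particular forces $\Bmu\in\SP^1_{n,r}$).

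Condition (ii), the expansion in Schur functions, is immediate: writing ${Q'}^-_{\Bla'}=\sum_{\Bmu'\lve\Bla'}u'_{\Bla',\Bmu'}s_{\Bmu'}(x')$ with $u'_{\Bla',\Bla'}\ne 0$ as provided by Theorem 2.14 in the $(r-1)$-setting, and using $s_{\Bmu'}(x')=s_{\Bmu}(x)$, one gets $R=\sum_{\Bmu\lve\Bla}u_{\Bla,\Bmu}(\Bt)s_{\Bmu}(x)$ with $u_{\Bla,\Bla}\ne 0$. The real work is in condition (i).

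For condition (i) I start from ${Q'}^-_{\Bla'}=\sum_{\Bmu'\gve\Bla'}c'_{\Bla',\Bmu'}{q'}^-_{\Bmu'}(x';\Bt')$ with $c'_{\Bla',\Bla'}=1$, and re-expand each ${q'}^-_{\Bmu'}$ into the $q^-_{\Bnu}(x;\Bt)$ (which form a $\BQ(\Bt)$-basis by Lemma 2.6) using Lemma 3.2(ii): the factors for $k=2,\dots,r-1$ are literally the corresponding $q^{(k)}$, while $\prod_i{q'}^{(r)}_{\mu^{(r)}_i,-}(x';t_1t_r)$ expands by (3.2.2) into a $\BZ[\Bt]$-combination of $\prod_iq^{(r)}_{s'_i,-}(x;t_r)\prod_iq^{(1)}_{s''_i,-}(x;t_1)$ over all splittings $s'_i+s''_i=\mu^{(r)}_i$, with weight $t_r^{\sum_is''_i}$. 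Collecting the symmetric factors, each term is $q^-_{\Bnu}(x;\Bt)$ with $\Bnu=(\nu^{(1)},\mu^{(2)},\dots,\mu^{(r-1)},\nu^{(r)})$, where $\nu^{(1)},\nu^{(r)}$ are the partitions obtained by sorting $(s''_i)_i$ and $(s'_i)_i$. That the coefficient of $q^-_{\Bla}$ is $1$ is clean: $\Bnu=\Bla$ forces $\nu^{(1)}=\emptyset$, whence every $s''_i=0$, $\nu^{(r)}=\mu^{(r)}$, and therefore $\Bmu'=\Bla'$ with only the trivial splitting contributing, of weight $t_r^0=1$.

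The remaining and main point is that every $\Bnu$ above satisfies $\Bnu\gve\Bla$. I would deduce this from the dominance inequality $\Bnu\ge\Bmu$ in $\SP_{n,r}$ (2.11): combined with $\Bmu\gve\Bla$ (from $\Bmu'\gve\Bla'$ by the same compatibility) and the fact that $\lve$ refines $\le$, it gives $\Bnu\gve\Bmu\gve\Bla$. Since $c(\Bnu)$ and $c(\Bmu)$ differ only in the entries indexed by $(1,i)$ and $(r,i)$, all partial-sum comparisons reduce, at the threshold closing the $l$-th block, to $\sum_{i=1}^l\nu^{(1)}_i+\sum_{i=1}^l\nu^{(r)}_i\ge\sum_{i=1}^l\mu^{(r)}_i$, and at interior thresholds to this plus the nonnegative quantity $\nu^{(1)}_l$. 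The displayed inequality is the elementary majorization fact that, for $a_i,b_i\ge 0$, the sum of the $l$ largest $a_i$ plus the sum of the $l$ largest $b_i$ is at least the sum of the $l$ largest $a_i+b_i$ (here $a_i=s'_i$, $b_i=s''_i$, $a_i+b_i=\mu^{(r)}_i$); this is where I expect the genuine bookkeeping, precisely because the sorting prevents a term-by-term comparison. Granting $\Bnu\ge\Bmu$, conditions (i) and (ii) both hold and uniqueness gives $R=Q^-_{\Bla}$. The ``$+$'' case is identical with Lemma 3.2(i) in place of (ii): now $\mu^{(2)}$ is split into $\nu^{(1)},\nu^{(2)}$ via (3.2.1) (weight $t_1^{\sum_is''_i}$), $c(\Bnu)$ and $c(\Bmu)$ differ only at $(1,i),(2,i)$, and the same majorization inequality (with $r$ replaced by $2$) yields $\Bnu\ge\Bmu$ and hence the claim.
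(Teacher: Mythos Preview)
Your proof is correct and follows essentially the same route as the paper: verify the two conditions of Theorem~2.14 for ${Q'}^{\pm}_{\Bla'}$ viewed inside $\Xi^n_{\BQ}(\Bt)$, by re-expanding each ${q'}^{\pm}_{\Bmu'}$ through Lemma~3.2 and checking that only $q^{\pm}_{\Bnu}$ with $\Bnu \gve \Bla$ occur, with coefficient $1$ at $\Bnu=\Bla$. The one place where the paper's bookkeeping is a bit slicker than the majorization step you flag is the verification of $\Bnu \ge \Bmu$: the paper keeps $\Bnu$ as the unsorted $r$-composition (with $\nu^{(1)}_i+\nu^{(2)}_i=\mu^{(2)}_i$ in the ``$+$'' case, resp.\ $\nu^{(1)}_i+\nu^{(r)}_i=\mu^{(r)}_i$ in the ``$-$'' case), for which $c(\Bnu)\ge c(\Bmu)$ is immediate term by term, and then uses the standard fact that the associated $r$-partition $\wt\Bnu$ satisfies $\wt\Bnu\ge\Bnu$.
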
  

\begin{proof}
Since ${Q'}^{\pm}_{\Bla'}$ is written as a linear combination of $s_{\Bmu'}(x')$ 
with $\Bmu' \lve \Bla'$, by our choice of the total order $\lve$ on $\SP_{n,r}$,  
 it is written as a linear combination of $s_{\Bmu}(x)$ with $\Bmu \lve \Bla$. 
Thus it is enough to show  that ${Q'}^{\pm}_{\Bla'}$ can be written as 
a linear combination of $q^{\pm}_{\Bmu}$ with $\Bmu \gve \Bla$ such that the coefficient of 
$q^{\pm}_{\Bla}$ is equal to 1.  We can write 
\begin{equation*}
{Q'}^{\pm}_{\Bla'}(x';\Bt') = {q'}^{\pm}_{\Bla'}(x';\Bt') + 
      \sum_{\substack{\Bmu' \in\SP_{n,r-1} \\ \Bmu' \gv \Bla'}}
                 c_{\Bla',\Bmu'}(\Bt'){q'}^{\pm}_{\Bmu'}(x';\Bt'),
\end{equation*} 
where $\Bt' = (t_2, \dots, t_{r-1}, t_1t_r)$. 
Here for $\Bmu' = (\mu^{(2)}, \dots, \mu^{(r)})$,
\begin{equation*}
{q'}^+_{\Bmu'}(x';\Bt') = \prod_{i = 1}^m{q'}^{(2)}_{\mu^{(2)}_i,+}(x';t_1t_r)
              \prod_{k = 3}^r\prod_{i = 1}^m q^{(k)}_{\mu^{(k)}_i,+}(x; t_{k-1}).
\end{equation*} 
${q'}^{(2)}_{\mu^{(2)}_i,+}(x';t_1t_r)$ can be written as a linear combination of 
$q^{(2)}_{s',+}(x;t_1)q^{(1)}_{s'',+}(x;t_r)$ by (3.2.1), where $s', s''\ge 0$ are such that 
$\mu^{(2)}_i = s' + s''$.   Hence ${q'}^+_{\Bmu'}$ can be written as a linear combination of 
various $q^+_{\Bnu}$, where $\Bnu = (\nu^{(1)}, \dots, \nu^{(r)})$ are $r$-compositions such that
$\nu^{(1)}_i + \nu^{(2)}_i = \mu^{(2)}_i$ for each $i$ and that $\nu^{(k)} = \mu^{(k)}$ for 
$k \ge 3$.   Then clearly $\Bnu \ge \Bmu$ for $\Bmu = (-,\mu^{(2)}, \dots, \mu^{(r)})$. 
If we denote by $\wt\Bnu$ the $r$-partition obtained 
from $\Bnu$ by rearranging the order, then we have $\wt \Bnu \ge \Bnu$.
This is true also for ${q'}^+_{\Bla'}$.    
It follows that ${Q'}^+_{\Bla'}$ is a linear combination of various $q^+_{\Bmu}$ for 
$\Bmu \in \SP_{n,r}$ such that $\Bmu \gve \Bla$.  The term $q^+_{\Bla}$ comes only from 
${q'}^+_{\Bla'}$, and it is easily checked that the coefficient of $q^+_{\Bla}$ is equal to 1. 
This proves the proposition in the ``$+$''case. The proof for the ``$-$''case is 
done similarly by using (3.2.2).
\end{proof}

As a corollary, we have the following result, which describes the relationship
of Hall-Littlewood functions and Kostka functions for different $r$. 

%%%%
%%%%
\begin{thm}   %%%%   Theorem  3.5
\begin{enumerate}
\item
Assume that $\Bla \in \SP^a_{n,r}$ for $1 \le a < r$.  
Let ${Q'}^{\pm}_{\Bla'}, {P'}^{\pm}_{\Bla'}$ be the functions 
defined with respect to 
$x' = (x^{(a+1)}, \dots, x^{(r)})$, and $\Bla' \in \SP_{n, r-a}$ 
the element corresponding to $\Bla$. 
Then
\begin{align*}
Q^{\pm}_{\Bla}(x; t_1, \dots, t_r) &= 
   {Q'}^{\pm}_{\Bla'}(x', t_{a+1}, t_{a+2},\dots, t_{r-1}, t_at_{a-1}\cdots t_1t_r),  \\
P^{\pm}_{\Bla}(x; t_1, \dots, t_r) &= 
   {P'}^{\pm}_{\Bla'}(x', t_{a+1}, t_{a+2},\dots, t_{r-1}, t_at_{a-1}\cdots t_1t_r).
\end{align*}   
\item 
Let $\Bla, \Bmu \in \SP_{n,r}$ be such that $\Bmu \lve \Bla$.  Assume that 
$\Bla \in \SP_{n,r}^{a}$.  Then $\Bmu \in \SP_{n,r}^{a}$, 
and we have
\begin{equation*}
K^{\pm}_{\Bla,\Bmu}(t_1, \dots, t_r) = 
     {K'}^{\pm}_{\Bla',\Bmu'}(t_{a+1}, \dots, t_{r-1}, t_at_{a-1}\cdots t_1t_r),
\end{equation*}
where ${K'}^{\pm}_{\Bla',\Bmu'}$ is the Kostka function associated to 
$\Bla', \Bmu' \in \SP_{n, r-a}$. 
\item 
Assume that $a = r-1$. For $\Bla = (-,\dots, -,\la^{(r)}) \in \SP_{n,r}^{r-1}$, 
by setting $t_1 = \cdots = t_r = t$, we have
\begin{align*}
Q^{\pm}_{\Bla}(x; t) &= Q_{\la^{(r)}}(x^{(r)}, t^r),  \\ 
P^{\pm}_{\Bla}(x; t) &= P_{\la^{(r)}}(x^{(r)}, t^r),  \\ 
\end{align*}
where the left hand side is the one variable Hall-Littlewood functions associated to 
the $r$-partition $\Bla$ $($see $(2.15.2))$, 
and the right hand side is the classical Hall-Littlewood functions 
associated to the partition $\la^{(r)}$. 
\item
Under the same assumption as in $({\rm iii})$,  take $\Bmu \in \SP_{n,r}$
such that $\Bmu \lve \Bla$.  Then $\Bmu = (-, \dots, -, \mu^{(r)}) \in \SP_{n,r}^{r-1}$, 
and we have
\begin{equation*}
K^{\pm}_{\Bla,\Bmu}(t) = K_{\la^{(r)},\mu^{(r)}}(t^r),
\end{equation*}
where the left hand side is the one-variable Kostka function  
associated to $r$-partitions $($see $(2.15.3))$, and the right hand side is 
the classical Kostka polynomial associated to partitions. 
\end{enumerate}
\end{thm}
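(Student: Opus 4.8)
The plan is to derive the theorem from Proposition 3.4 by iterating it $a$ times, with (iii) and (iv) emerging as the extreme case $a = r-1$ specialized at $t_1 = \cdots = t_r = t$; all the essential content is already in Proposition 3.4, so what remains is an induction together with bookkeeping of the merged parameter. For part (i), the case $a = 1$ is exactly Proposition 3.4 for the $Q$-functions. To pass to the $P$-functions when $a = 1$, I would expand both sides of $Q^{\pm}_{\Bla}(x;\Bt) = {Q'}^{\pm}_{\Bla'}(x'; t_2, \dots, t_{r-1}, t_1 t_r)$ in the Schur basis, using that $s_{\Bmu}(x) = s_{\Bmu'}(x')$ whenever $\Bmu \in \SP^1_{n,r}$. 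By the $Q$-version of Theorem 2.14(ii) the coefficient of $s_{\Bla}$ on the left equals $b_{\Bla}(\Bt)$ while that of $s_{\Bla'}$ on the right equals ${b'}_{\Bla'}(t_2, \dots, t_{r-1}, t_1 t_r)$; these two scalars therefore coincide, and dividing the $Q$-identity by the common scalar and invoking (2.13.1) gives the $P$-identity for $a = 1$.

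For general $a$ I would induct on $a$. If $\Bla \in \SP^a_{n,r}$ with $a \ge 2$, then in particular $\Bla \in \SP^1_{n,r}$, so the case $a = 1$ reduces the problem to ${P'}^{\pm}_{\Bla'}, {Q'}^{\pm}_{\Bla'}$ on $x' = (x^{(2)}, \dots, x^{(r)})$ with parameter vector $(t_2, \dots, t_{r-1}, t_1 t_r)$, and moreover $\Bla' \in \SP^{a-1}_{n,r-1}$. Applying the inductive hypothesis over $\SP_{n,r-1}$ to these data, the $(a-1)$-fold reduction yields the variables $x^{(a+1)}, \dots, x^{(r)}$ and the last parameter $t_a t_{a-1} \cdots t_2 \cdot (t_1 t_r) = t_a t_{a-1} \cdots t_1 t_r$, which is precisely the asserted parameter, while the other entries $t_{a+1}, \dots, t_{r-1}$ are carried along unchanged. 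Since the total order was fixed in 3.3 to be compatible with the chain $\SP_{n,1} \subset \cdots \subset \SP_{n,r}$, the hypothesis of Proposition 3.4 remains valid at each stage of the iteration.

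For part (ii), the same compatibility shows that $\SP^a_{n,r}$ is an initial segment for $\lve$, so $\Bmu \lve \Bla \in \SP^a_{n,r}$ forces $\Bmu \in \SP^a_{n,r}$; in particular the claim $\Bmu \in \SP^a_{n,r}$ is automatic. Because $K^{\pm}_{\Bla,\Bmu}(\Bt) = 0$ unless $\Bmu \lve \Bla$, the defining relation (2.15.1) becomes a sum over $\Bmu \in \SP^a_{n,r}$ only. I would then substitute $s_{\Bla}(x) = s_{\Bla'}(x')$ and, by part (i), $P^{\pm}_{\Bmu}(x;\Bt) = {P'}^{\pm}_{\Bmu'}(x';\Bt')$ with $\Bt' = (t_{a+1}, \dots, t_{r-1}, t_a \cdots t_1 t_r)$, and compare with the defining expansion of $s_{\Bla'}(x')$ in the basis $\{ {P'}^{\pm}_{\Bmu'} \}$. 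Uniqueness of the coefficients in this basis then gives $K^{\pm}_{\Bla,\Bmu}(\Bt) = {K'}^{\pm}_{\Bla',\Bmu'}(\Bt')$.

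Finally, parts (iii) and (iv) follow by taking $a = r-1$ in (i) and (ii) and setting $t_1 = \cdots = t_r = t$. Then the reduced datum lives in $\SP_{n,1} = \SP_n$: the variables collapse to the single family $x^{(r)}$, the partitions $\Bla', \Bmu'$ become $\la^{(r)}, \mu^{(r)}$, and the parameter vector degenerates to the single entry $t_{r-1} t_{r-2} \cdots t_1 t_r$, which equals $t^r$ after the specialization. Since for $r' = 1$ the two types coincide with the classical objects, ${P'}^{\pm}_{\Bla'}, {Q'}^{\pm}_{\Bla'}$ become the ordinary Hall--Littlewood functions $P_{\la^{(r)}}, Q_{\la^{(r)}}$ and ${K'}^{\pm}_{\Bla',\Bmu'}$ the ordinary Kostka polynomial; combining this with (2.15.2) and (2.15.3), which identify the specialized left-hand sides with the one-variable functions, yields (iii) and (iv). The only steps requiring genuine care are confirming that $\SP^a_{n,r}$ is an initial segment, so that the expansion (2.15.1) stays inside the subset, and tracking the iterated parameter-merging so that the last entry emerges as $t_a t_{a-1} \cdots t_1 t_r$; both are prone to indexing errors but present no conceptual difficulty, the real content being Proposition 3.4 itself.
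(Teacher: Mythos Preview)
Your proposal is correct and follows essentially the same approach as the paper's proof. The paper derives the first formula of (i) from Proposition 3.4 (the iteration you spell out is left implicit there), then passes to the $P$-identity by comparing leading Schur coefficients via Theorem 2.14 exactly as you do, obtains (ii) immediately from the $P$-formula, and treats (iii), (iv) as the special case $a=r-1$; your write-up simply makes the induction on $a$ and the parameter bookkeeping explicit.
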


\begin{proof}
The first formula of (i) follows from Proposition 3.4. 
In this formula, $Q^{\pm}_{\Bla}(x;\Bt)$ has an expansion in terms of Schur
functions
\begin{equation*}
\tag{3.5.1}
Q^{\pm}_{\Bla}(x;\Bt) = \sum_{\Bmu \lve \Bla}u_{\Bla,\Bmu}(\Bt)s_{\Bmu}(x).
\end{equation*}
On the other hand, if $\Bmu \lve \Bla$ and $\Bla \in \SP^a_{n,r}$, 
then $\Bmu \in \SP^a_{n,r}$, and $s_{\Bmu}(x) = s_{\Bmu'}(x')$. 
Thus (3.5.1) also gives an expansion of ${Q'}^{\pm}_{\Bla'}(x';\Bt')$ in terms of 
Schur functions for $\Bmu' \in \SP_{n, r-a}$,  
\begin{equation*}
\tag{3.5.2}
{Q'}^{\pm}_{\Bla'}(x';\Bt') = \sum_{\Bmu' \lve \Bla'}u'_{\Bla',\Bmu'}(\Bt')s_{\Bmu'}(x')
\end{equation*} 
with $u_{\Bla,\Bla}(\Bt) = u'_{\Bla',\Bla'}(\Bt')$, where 
$\Bt' = (t_{a+1}, t_{a+2}, \dots, t_{r-1}, t_at_{a-1}\cdots t_1t_r)$.
By Theorem 2.14, we have $P^{\pm}_{\Bla}(x;\Bt) = u_{\Bla,\Bla}(\Bt)\iv Q^{\pm}_{\Bla}(x;\Bt)$, 
${P'}^{\pm}_{\Bla'}(x';\Bt') = {u'}_{\Bla',\Bla'}(\Bt')\iv {Q'}^{\pm}_{\Bla'}(x';\Bt')$. 
Thus we obtain the second formula of (i). 
Now (ii) is immediate from the second formula of (i). 
(iii) and (iv) are the special case of (i) and (ii).  
\end{proof}

\remark{3.6.}  In the case where $r = 2$, the formula (iv) was first 
proved by Achar-Henderson in [AH, Corollary 5.3] by a geometric method. 
After that a combinatorial proof of (iv) and the related formula (iii) 
for Hall-Littlewood functions (for $r = 2$) were given 
in [LS, Proposition 1.11 and Corollary 1.12].
This argument also works for the general $r$.   
In those discussions, the proof proceeds under the one-variable setting, 
namely under the setting where $t_1 = t_2 = \cdots = t_r = t$.  
However, in order to describe the relationship among Kostka functions and 
Hall-Littlewood functions as in the setting of (i) and (ii), one needs to introduce 
multi-variable Kostka functions and Hall-Littlewood functions. 
Note that the proof of (iii) and (iv) here is much simpler than 
the discussion in [LS]. 
%%%
%%%
%%%
\par\medskip
\section{ Closed formula for Hall-Littlewood functions}

\para{4.1.}
In this section, we define a function $R^{\pm}_{\Bla}(x;\Bt)$, as 
an analogue of the function $R_{\la}(x;t)$ in [M, III, 1], and show 
in this and next section that $Q^{\pm}_{\Bla}(x;\Bt)$ has an explicit 
description in terms of $R^{\pm}_{\Bla}(x;\Bt)$ under a mild restriction. 
\par  
Let $\SM$ be as in (2.4.1). 
We define a total order on $\SM$ by 
\begin{equation*}
  (1,1) < (2,1) < \cdots < (r,1) < (1,2) < (2,2) < \cdots < (r,2) < \cdots. 
\end{equation*}
We fix $\Bla = (\la^{(1)}, \dots, \la^{(r)}) \in \SP_{n,r}$ with 
$\la^{(k)} = (\la^{(k)}_1, \dots, \la^{(k)}_m)$ for a common $m \ge 1$. 
Write $\la^{(k)}_i = \la_{\nu}$ and $x^{(k)}_i = x_{\nu}$ if $\nu = (k,i) \in \SM$. 
Let $\nu_0 = (k_0,i_0) \in \SM$ be the largest element such that 
$\la_{\nu_0} \ne 0$. We put $b(\nu) = k$ if $\nu = (k,i)$. 
We define a function $I^{\pm}_{\nu}(x;\Bt)$ for $\nu = (k,i)  \in \SM$ by 
\begin{equation*}
\tag{4.1.1}
I^{\pm}_{\nu}(x;\Bt) = \begin{cases}
             \displaystyle\prod_{(k,i) <  (k \mp 1, j) }
                      (x^{(k)}_i - t_{k-c}x^{(k\mp 1)}_j)
                       &\quad\text{ if $(k,i) \le \nu_0$ and $\la^{(k)}_i \ne 0$, } \\  \\
             \hspace{6mm}\displaystyle\prod_{i < j}
                      (x^{(k)}_i - t_{k-c}x^{(k\mp 1)}_j)
                       &\quad\text{ if $(k,i) \le \nu_0$ and $\la^{(k)}_i =  0$, } \\  \\
             \hspace{6mm}\displaystyle\prod_{\substack{ i < j}}
                      (x^{(k)}_i - t_kx^{(k)}_j)
                        &\quad\text{ if $(k,i) > \nu_0$, }
                      \end{cases}
\end{equation*}
where $c$ is as in (2.5.3). 
We regard $k \in \BZ/r\BZ$.
\par
Let $S_m^r = S_m \times \cdots \times S_m$ ($r$-factors) 
be the permutation group of the variables 
$x = (x^{(1)}, \dots, x^{(r)})$. 
We define a function $R_{\Bla}^{\pm}(x;\Bt)$ by 
\begin{equation*}
\tag{4.1.2}
R^{\pm}_{\Bla}(x;\Bt) = \sum_{w \in S_m^r}w\left(
               \frac{\displaystyle\prod_{1 \le k \le r}(x^{(k)})^{\la^{(k)} - \ve_{\pm}^{(k)}}
                        \prod_{\nu \in \SM }I^{\pm}_{\nu}(x;\Bt)}
                       {\displaystyle\prod_{1 \le k \le r}\prod_{i < j}x_i^{(k)} - x_j^{(k)}}\right),     
\end{equation*}
where $\ve^{(k)}_{\pm} = (\ve^{(k)}_{1,\pm} \dots, \ve^{(k)}_{m, \pm}) \in \BZ^m$ 
is given by  
\begin{align*}
\ve^{(k)}_{i,+} &= \begin{cases}
                 1  &\quad\text{ if  $\la^{(k)}_i \ne 0$ and $k = 1$, }\\
                 0  &\quad\text{ otherwise,}
              \end{cases}  \\
\ve^{(k)}_{i,-} &= \begin{cases}
                 1  &\quad\text{ if  $\la^{(k)}_i \ne 0$ and $k \ne r$, }\\
                 0  &\quad\text{ otherwise. } 
                   \end{cases}
\end{align*}
It follows from the definition that $R^{\pm}_{\Bla}(x;\Bt) \in \BZ[x_{\SM};\Bt]$ and that
\begin{equation*}
\tag{4.1.3}
R^{\pm}_{\Bla}(x;\bold 0) = s_{\Bla}(x).
\end{equation*}

\par
For $\Bla \in \SP_{n,r}$, define a subgroup $S'_{\Bla}$ of $S_m^r$ by 
$S'_{\Bla} = S_{\la'_1} \times \cdots \times S_{\la'_r}$,
where $\la'_k = \sharp\{ 1 \le i \le m \mid (k,i) > \nu_0 \}$.
We regard $S_{\la'_k}$ as the permutation group of the  set 
$\{ x^{(k)}_i \mid (k,i) > \nu_0 \}$ fixing any variable $x^{(k)}_i$ for $(k,i) \le \nu_0$.
We define a polynomial $v'_{\Bla}(\Bt) \in \BZ[\Bt]$ by 
\begin{equation*}
\tag{4.1.4}
v'_{\Bla}(\Bt) = \prod_{k=1}^rv_{\la'_k}(t_k),
\end{equation*}
where $v_i(t)$ is given as in (2.2.1). 

\para{4.2.}
We consider the special case where $\Bla = ((s); -; \cdots;-)$ with $s \ge 1$. 
Then $S'_{\Bla} = S_{m-1} \times S^{r-1}_m$, where 
$S_{m-1}$ is the stabilizer of $x^{(1)}_1$. 
Hence $v'_{\Bla}(\Bt) = v_{m -1}(t_1)\prod_{k \ge 2}v_m(t_k)$. 
We have
\begin{equation*}
\begin{split}
\prod_k&(x^{(k)})^{\la^{(k)} - \ve^{(k)}_{\pm}}\prod_{\nu \in \SM} I_{\nu}^{\pm}(x;\Bt)  \\ 
          &= (x^{(1)}_1)^{s-1}\prod_{j \ge 1}(x^{(1)}_1 - t_{1-c}x^{(1\mp 1)}_j)
                \prod_{2 \le i < j}(x^{(1)}_i - t_1x^{(1)}_j)
                   \prod_{k \ge 2, i < j}(x^{(k)}_i - t_kx^{(k)}_j).
\end{split}
\end{equation*}
$S'_{\Bla}$ stabilizes the factor
\begin{equation*}
            (x^{(1)}_1)^{s-1}\prod_{j \ge 1}(x^{(1)}_1 - t_{1-c}x^{(1 \mp 1)}_j)/
                \prod_{1 < j}(x_1^{(1)} - x_j^{(1)}) 
\end{equation*}
and by [M, p.207, (1.4)]
\begin{equation*}
\sum_{w' \in S'_{\Bla}}w'\biggl(\prod_{2 \le i < j}
             \frac{x^{(1)}_i - t_1x^{(1)}_j}{x^{(1)}_i - x^{(1)}_j}
                        \prod_{k \ge 2, i < j}
             \frac{x^{(k)}_i - t_kx^{(k)}_j}{x^{(k)}_i - x^{(k)}_j}\biggr)
        = v_{m-1}(t_1)\prod_{k \ge 2}v_m(t_k) = v'_{\Bla}(\Bt).
\end{equation*}
It follows that
\begin{align*}
R_{\Bla}^{\pm}(x;\Bt) &= 
      v'_{\Bla}(\Bt)\sum_{w \in S_m/S_{m-1}}w\biggl(
              (x_1^{(1)})^{s-1}(x_1^{(1)} - t_{1-c}x_1^{(1 \mp 1)})
       \prod_{1 < j}\frac{x_1^{(1)} - t_{1-c}x_j^{(1 \mp 1)}}{x_1^{(1)} - x_j^{(1)}}\biggr)  \\
             &= v'_{\Bla}(\Bt)\sum_{i = 1}^m
               (x_i^{(1)})^{s-1}(x_i^{(1)} - t_{1-c}x_i^{(1 \mp 1)})
       \prod_{j \ne i}\frac{x_i^{(1)} - t_{1-c}x_j^{(1 \mp 1)}}{x_i^{(1)} - x_j^{(1)}}  \\
            &= v'_{\Bla}(\Bt)q_{s,\pm}^{(1)}(x;\Bt). 
\end{align*}
Hence we have 
\par\medskip
\noindent 
(4.2.1) \ Assume that $\Bla = ((s); -; \cdots; -)$.  Then 
$R^{\pm}_{\Bla}(x;\Bt) = v'_{\Bla}(\Bt)q^{\pm}_{\Bla}(x;\Bt)$.

\par\medskip
The above computation can be generalized as follows. 
For $\Bla \in \SP_{n,r}$, 
$S'_{\Bla}$ stabilizes the expression 
\begin{equation*}
\prod_{1 \le k \le r}(x^{(k)})^{\la^{(k)} - \ve^{(k)}_{\pm}}
      \prod_{\nu \le \nu_0 }I^{\pm}_{\nu}(x;\Bt).
\end{equation*}
Moreover, by [M, p.207, (1.4)],  
\begin{equation*}
\sum_{w' \in S'_{\Bla}}w'\biggl( \prod_{(k,i) > \nu_0}\prod_{\substack{i < j}}
           \frac{x^{(k)}_i - t_kx^{(k)}_j}{x^{(k)}_i - x^{(k)}_j}\biggr) = v'_{\Bla}(\Bt). 
\end{equation*}
Thus we have an expression for $R^{\pm}_{\Bla}(x;\Bt)$,
\begin{equation*}
\tag{4.2.2}
\begin{split}
   R^{\pm}_{\Bla}(x;\Bt)   
     = v'_{\Bla}(\Bt)\sum_{w \in S^r_m/S'_{\Bla}}
    w\left(\frac{\displaystyle\prod_{1 \le k \le r}(x^{(k)})^{\la^{(k)} - \ve_{\pm}^{(k)}}
                \prod_{(k,i) \le \nu_0}\prod_{(*) }
                          x^{(k)}_i - t_{k-c}x^{(k\mp 1)}_j}
                 {\displaystyle\prod_{(k,i) \le \nu_0}\prod_{\substack{i < j }}
                          x^{(k)}_i - x^{(k)}_j}  \right),      
\end{split}
\end{equation*}
where (*) is given by the first and the second condition in (4.1.1). 
It follows from (4.2.2) that
\par\medskip\noindent
(4.2.3)  $R^{\pm}_{\Bla}(x;\Bt) \in \BZ[x_{\SM};\Bt]$ is divisible by $v'_{\Bla}(\Bt)$. 
$v'_{\Bla}(\Bt)\iv R^{\pm}_{\Bla}(x;\Bt) \in \BZ[x_{\SM};\Bt]$ 
has the stability for the increase of variables 
$\{x^{(k)}_1, \dots, x^{(k)}_m\}$  to $\{ x_1^{(k)}, \dots, x^{(k)}_{m+1}\}$. In particular, 
by taking $m \mapsto \infty$, we obtain $v'_{\Bla}(\Bt)\iv R^{\pm}_{\Bla} \in \Xi^n[\Bt]$.  
\par\medskip
We define a polynomial $\wt P^{\pm}_{\Bla}(x;\Bt) \in \BZ[x_{\SM};\Bt]$ by 
\begin{equation*}
\tag{4.2.4}
\wt P^{\pm}_{\Bla}(x;\Bt) = v'_{\Bla}(\Bt)\iv R^{\pm}_{\Bla}(x;\Bt). 
\end{equation*}
(4.2.4) is an analogue of [M, III, (2.1)] in the classical case. But note 
that $v'_{\Bla}(\Bt)$ is different from $v_{\la}(t)$ there. 
Let $p$ be the largest number such that $\la^{(k)}_p \ne 0$ for 
$k = 1, \dots, r-1$, and put 
$j_0 = \max\{\ell(\la^{(r)}) -p, 0\}$. 
We define a polynomial $\wt Q^{\pm}_{\Bla}(x;\Bt) \in \BZ[x_{\SM};\Bt]$ by 
\begin{equation*}
\tag{4.2.5}
\wt Q^{\pm}_{\Bla}(x;\Bt) = 
                (1 - t_0)^{j_0}v'_{\Bla}(\Bt)\iv R^{\pm}_{\Bla}(x;\Bt)
                          =  (1 - t_0)^{j_0}\wt P^{\pm}_{\Bla}(x;\Bt),
\end{equation*} 
where we put $t_0 = t_1\cdots t_r$. 
By taking $m \to \infty$, $\wt P^{\pm}_{\Bla}(x;\Bt), \wt Q^{\pm}_{\Bla}(x;\Bt)$ 
determine symmetric functions $\wt P^{\pm}_{\Bla}, \wt Q^{\pm}_{\Bla} \in \Xi^n[\Bt]$.  
\par\medskip
Next result describes the expansion of $R^{\pm}_{\Bla}(x;\Bt)$
in terms of Schur functions.  Note that in this formula, the total 
order $\lve$ is replaced by the partial order $\le$.  

\begin{prop} %%%  Prop. 4.3
For a given $\Bla \in \SP_{n,r}$, fix a set $\SM$, and 
consider $R^{\pm}_{\Bla}(x;\Bt) \in \BZ[x_{\SM};\Bt]$. 
Then there exist polynomials $u^{\pm}_{\Bla,\Bmu}(\Bt) \in \BZ[\Bt]$ such that
\begin{equation*}
R^{\pm}_{\Bla}(x;\Bt) = \sum_{\Bmu \le \Bla}u^{\pm}_{\Bla,\Bmu}(\Bt)s_{\Bmu}(x). 
\end{equation*}
Moreover, $u^{\pm}_{\Bla\Bmu}(\bold 0) = 0$ if $\Bmu \ne \Bla$, and  
$u^{\pm}_{\Bla\Bla}(\bold 0) = 1$. 
\end{prop}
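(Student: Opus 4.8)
The plan is to convert the symmetrization in (4.1.2) into an alternating sum and then read off the Schur coefficients by straightening, in direct imitation of the classical computation in [M, III]. Write $N = \prod_{1\le k\le r}(x^{(k)})^{\la^{(k)}-\ve^{(k)}_{\pm}}\prod_{\nu\in\SM}I^{\pm}_{\nu}(x;\Bt)$ for the numerator and $D = \prod_{1\le k\le r}\prod_{i<j}(x^{(k)}_i-x^{(k)}_j)$ for the denominator. Since $w(D)=\operatorname{sgn}(w)\,D$ for $w=(w_1,\dots,w_r)\in S^r_m$ with $\operatorname{sgn}(w)=\prod_k\operatorname{sgn}(w_k)$, the sum $\sum_w w(N/D)$ becomes
\[
R^{\pm}_{\Bla}(x;\Bt)=\frac{1}{D}\sum_{w\in S^r_m}\operatorname{sgn}(w)\,w(N).
\]
This expression is manifestly symmetric in each $x^{(k)}$ and lies in $\BZ[x_{\SM};\Bt]$, so it does expand in the Schur basis; the real work is to control which $s_{\Bmu}$ occur and to see the coefficients are in $\BZ[\Bt]$.

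Next I would expand $N$ as a $\BZ[\Bt]$-combination of monomials $x^{\gamma}=\prod_k(x^{(k)})^{\gamma^{(k)}}$, each $I^{\pm}_{\nu}$ being a product of binomials $x^{(k)}_i-t\,x^{(k')}_j$. Applying the alternating sum colour by colour and using the standard identity
\[
\frac{1}{a_{\delta}(x^{(k)})}\sum_{w_k\in S_m}\operatorname{sgn}(w_k)\,w_k\big((x^{(k)})^{\gamma^{(k)}}\big)=\pm\,s_{\nu^{(k)}}(x^{(k)}),
\]
where $\nu^{(k)}$ is obtained by subtracting $\delta=(m-1,\dots,0)$ from the decreasing rearrangement of $\gamma^{(k)}$ (and the left side is $0$ unless that rearrangement is strictly decreasing), I send each monomial of $N$ to $\pm s_{\Bnu}$ with a coefficient in $\BZ[\Bt]$. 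Taking the product over $k$ yields $R^{\pm}_{\Bla}=\sum_{\Bmu}u^{\pm}_{\Bla,\Bmu}(\Bt)s_{\Bmu}$ with $u^{\pm}_{\Bla,\Bmu}(\Bt)\in\BZ[\Bt]$.

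For the dominance bound I would first pin down the leading monomial. By (4.1.3), setting $\Bt=\bold 0$ annihilates every $t$-term, so $N|_{\Bt=\bold 0}=x^{\beta_0}$ is a single monomial and $R^{\pm}_{\Bla}(x;\bold 0)=s_{\Bla}$; comparing with the straightening forces the decreasing rearrangement of $\beta_0^{(k)}$ to be $\la^{(k)}+\delta$ for each $k$. Every other monomial $x^{\gamma}$ of $N$ arises from $x^{\beta_0}$ by replacing, in one or more binomials $x^{(k)}_i-t\,x^{(k')}_j$, the factor $x^{(k)}_i$ by $t\,x^{(k')}_j$. The three cases of (4.1.1) are arranged precisely so that in every such binomial the index $(k',j)$ is strictly larger than $(k,i)$ in the total order on $\SM$ of 4.1, which is the same order underlying $c(\cdot)$ in 2.11. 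Hence each replacement moves one box to a strictly later coordinate, giving $c(\gamma)\le c(\beta_0)$ in the dominance order on $\BZ^{rm}$. Granting that the colour-wise straightened index $\Bmu$ then satisfies $\Bmu\le\Bla$, the support statement follows. The two assertions at $\Bt=\bold 0$ are then immediate: substituting $\Bt=\bold 0$ into $R^{\pm}_{\Bla}=\sum_{\Bmu}u^{\pm}_{\Bla,\Bmu}(\Bt)s_{\Bmu}$, invoking (4.1.3), and using the linear independence of the $s_{\Bmu}$ gives $u^{\pm}_{\Bla,\Bmu}(\bold 0)=\d_{\Bla,\Bmu}$.

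I expect the main obstacle to be exactly the implication $c(\gamma)\le c(\beta_0)\Rightarrow\Bmu\le\Bla$, that is, passing from an inequality between exponent vectors in the interleaved order to an inequality between the colour-wise straightened partitions. Naive per-colour sorting does not preserve the interleaved dominance order, so this step cannot be purely formal; it must use the non-negativity of the exponents occurring in $N$ together with the staircase structure forced by the factors $I^{\pm}_{\nu}$, in the manner of the alternant estimates of [M, I]. This is the delicate point where the multi-variable bookkeeping is essential, and it is what the analysis continued in the next section is designed to handle.
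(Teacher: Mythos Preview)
Your setup is the same as the paper's: antisymmetrize the numerator, expand $\prod_\nu I^{\pm}_\nu$ into monomials, and straighten colour by colour. The observation that in every binomial the second index lies strictly later in the total order on $\SM$ is correct, and it does give $c(\gamma)\le c(\beta_0)=c(\Bla)+\delta'$ (with $\delta'$ the row-constant staircase). The statements at $\Bt=\bold 0$ follow from (4.1.3) exactly as you say. So the only issue is the dominance bound.

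That issue, however, is a genuine gap, and it is not deferred to the next section: in the paper the inequality $\Bmu\le\Bla$ is proved \emph{inside} the proof of this proposition. Your proposed route through $c(\gamma)\le c(\beta_0)$ cannot be completed formally for precisely the reason you suspect: per-colour sorting only \emph{increases} interleaved partial sums (for each colour $k$ and each $a$ one has $\sum_{i\le a}\mathrm{sort}(\gamma^{(k)})_i\ge\sum_{i\le a}\gamma^{(k)}_i$, hence $c(\mathrm{sort}(\gamma))\ge c(\gamma)$), so from $c(\gamma)\le c(\beta_0)$ one cannot deduce $c(\mathrm{sort}(\gamma))\le c(\beta_0)$. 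The paper avoids this trap by never comparing $\gamma$ to $\beta_0$. Instead it encodes each monomial of $\prod_\nu I^{\pm}_\nu$ by a $0$--$1$ matrix $(r_{\nu,\nu'})$ subject to an explicit complementarity condition (their (4.3.2)), writes $\beta^{(k)}_i=\la^{(k)}_i-\ve^{(k)}_{i,\pm}+\sum_{\nu'}r_{\nu,\nu'}$, applies the sorting permutation $w=(w_1,\dots,w_r)$, and then proves directly the inequality
\[
\sum_{k=1}^r\sum_{i=1}^a\mu^{(k)}_i+\sum_{k=1}^s\mu^{(k)}_{a+1}\ \le\ \sum_{k=1}^r\sum_{i=1}^a\la^{(k)}_{w_k(i)}+\sum_{k=1}^s\la^{(k)}_{w_k(a+1)},
\]
which together with $w(\Bla)\le\Bla$ gives $\Bmu\le\Bla$. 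The proof of this inequality is a concrete counting argument: one bounds $\sum_{\nu\in\SB,\nu'\in\SM}s_{\nu,\nu'}$ (with $s_{\nu,\nu'}=r_{w(\nu),w(\nu')}$ and $\SB$ the relevant initial block) by splitting it into pieces $A^{\pm}_k,B^{\pm}_k$ according to whether $\nu'$ falls inside or outside a companion block $\SB'_{\pm}$, and uses the complementarity (4.3.2) together with the shape of the conditions in (4.1.1) to get $A^{\pm}_k+B^{\pm}_k\le\sum_{i}(m-i+\ve^{(k)}_{w_k(i),\pm})$. That is where the case distinction in the definition of $I^{\pm}_\nu$, and the presence of the correction $\ve^{(k)}_{\pm}$, actually do their work. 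You will need to carry out this counting (or an equivalent one); the next section treats the expansion of $\wt Q^{\pm}_{\Bla}$ in the $q^{\pm}_{\Bmu}$ and does not return to this point.
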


\begin{proof}
The product $\prod_{\nu \in \SM }I^{\pm}_{\nu}(x;\Bt)$ can be written as a sum of monomials
\begin{equation*}
\tag{4.3.1}
          \prod_{\nu,\nu'}(x_{\nu})^{r_{\nu\nu'}}(-t_{\nu'}x_{\nu'})^{r_{\nu'\nu}},
\end{equation*}
where 
\begin{equation*}
t_{\nu'} = \begin{cases}
            t_{b(\nu') -1 + c}  &\quad\text{ if } \nu \le \nu_0, \\
            t_{b(\nu')}          &\quad\text{ if } \nu > \nu_0.
           \end{cases}
\end{equation*}
(See 4.1 for the definition of $b(\nu)$.)
Moreover, $(r_{\nu,\nu'})$ is an integral matrix indexed by $\SM$ consisting of 0 and 1 satisfying 
the condition

\begin{equation*}
\tag{4.3.2}
r_{\nu,\nu'} + r_{\nu',\nu} = \begin{cases}
                   1  &\quad\text{ if } \nu \le \nu_0, \nu = (k,i), \nu' = (k\mp 1, j), 
                                       \la_{\nu} \ne 0, \nu < \nu',  \\ 
                   1  &\quad\text{ if } \nu \le \nu_0, \nu = (k,i), \nu' = (k \mp 1, j), 
                                       \la_{\nu} = 0, i < j, \\
                   1  &\quad\text{ if } \nu > \nu_0, \nu = (k,i), \nu' = (k,j),  i < j, \\
                   0    &\quad\text{ otherwise,}
                              \end{cases}
\end{equation*}
and the matrices $(r_{\nu,\nu'})$ satisfying the above condition are in 1-1 correspondence 
with the above monomials.
For a given matrix $(r_{\nu,\nu'})$ as above, we put
\begin{equation*}
\b^{(k)}_i = \la^{(k)}_i - \ve^{(k)}_{i,\pm} + \sum_{\nu' \in \SM}r_{\nu,\nu'}
\end{equation*}
for $\nu = (k,i)$.  Let $\Bbe = (\b^{(1)}, \dots, \b^{(r)})$ be the $r$-composition. 
Then $\Bbe$ produces the ``Schur function'' $a_{\Bbe}/a_{\Bdel}$, where 
$a_{\Bbe} = \sum_{w \in S_m^r}\ve(w)w(x^{\Bbe})$, and $\Bdel = (\d^{(1)}, \dots, \d^{(r)})$ 
with $\d^{(k)} = (m-1, \dots, 1,0)$.  By (4.1.2), $R^{\pm}_{\Bla}$
is a sum of $\prod_{\nu'}(-t_{\nu'})^{r_{\nu',\nu}}a_{\Bbe}/a_{\Bdel}$ 
obtained from the matrix $(r_{\nu,\nu'})$. 
If the entries of the composition $\b^{(k)}$ are 
not all distinct for some $k$, then $a_{\Bbe}/a_{\Bdel} = 0$. 
So we may assume that all the entries of $\b^{(k)}$ are distinct for any $k$. 
By rearranging its entries in the decreasing order, one can write it as
\begin{equation*}
\b^{(k)}_{w_k(i)} = \mu^{(k)}_i + (m - i)
\end{equation*}
for some $w_k \in S_m$.  Then $\Bmu = (\mu^{(k)}_i) \in \SP_{n,r}$, and 
$a_{\Bbe}/a_{\Bdel}$ coincides with $\ve(w)s_{\Bmu}(x)$ for 
$w = (w_1, \dots, w_r) \in S^r_m$.
We shall show that 
\begin{equation*}
\tag{4.3.3}
\Bmu \le \Bla.
\end{equation*} 
 Define the matrix $(s_{\nu, \nu'})$ by $s_{\nu,\nu'} = r_{w(\nu), w(\nu')}$, 
where $w(\nu) = (k,w_k(i))$ for $\nu = (k,i) \in \SM$. 
One can write as
\begin{equation*}
\tag{4.3.4}
\mu^{(k)}_i + (m - i) = \la^{(k)}_{w_k(i)} - \ve^{(k)}_{w_k(i),\pm}
                + \sum_{\nu' \in \SM}s_{\nu,\nu'},
\end{equation*}
where $\nu = (k,i)$. 
We want to show that
\begin{equation*}
\tag{4.3.5}
\sum_{k=1}^r\sum_{i = 1}^{a}\mu^{(k)}_i + \sum_{k = 1}^s\mu^{(k)}_{a+1}
          \le \sum_{k=1}^r\sum_{i=1}^a\la^{(k)}_{w_k(i)} + \sum_{k=1}^s\la^{(k)}_{w_k(a+1)}
\end{equation*}
for $0 \le a \le m -1$ and $1 \le s \le r$. 
Note that (4.3.5) implies (4.3.3) since $w(\Bla) \le \Bla$ for any $w \in S^r_m$. 
By (4.3.4), we have

\begin{align*}
\sum_{k= 1}^r\sum_{i = 1}^a\mu^{(k)}_i &+ \sum_{k = 1}^s\mu^{(k)}_{a+1} \\
   &= \sum_{k=1}^r\sum_{i = 1}^a(\la^{(k)}_{w_k(i)} - \ve^{(k)}_{w_k(i),\pm})
               + \sum_{k = 1}^s(\la^{(k)}_{w_k(a+1)} - \ve^{(k)}_{w_k(a+1),\pm})  \\
   & - \sum_{k=1}^r\sum_{i = 1}^a(m - i) - \sum_{k = 1}^s(m - (a+1)) \\
   & + \sum_{\nu \in \SB, \nu' \in \SM}s_{\nu,\nu'},
\end{align*}

where 
\begin{equation*}
\SB = \{ (k,i) \in \SM \mid 1 \le k \le r, 1 \le i \le a\} 
           \cup \{ (k, a+1) \mid 1 \le k \le s\}.  
\end{equation*}

Hence in order to show (4.3.5), it is enough to see that
\begin{align*}
\tag{4.3.6}
\sum_{\nu \in \SB, \nu' \in \SM}s_{\nu, \nu'}
              &\le \sum_{k = 1}^r\sum_{i = 1}^a(m - i + \ve^{(k)}_{w_k(i),\pm}) 
               + \sum_{k = 1}^s(m - (a+1) + \ve^{(k)}_{w_k(a + 1),\pm}) \\
              &= \sum_{k=1}^s\sum_{i=1}^{a+1}(m - i + \ve^{(k)}_{w_k(i),\pm}) 
               + \sum_{k = s+1}^r\sum_{i = 1}^a(m - i + \ve^{(k)}_{w_k(i),\pm}).
\end{align*}
\par
Let 
$w(\SB) = \{ (k, w_k(i)) \mid (k,i) \in \SB\}$. 
Put $w'_+ = (w_2, w_3, \dots, w_r, w_1)$ and   
$w'_- = (w_r, w_1, \dots, w_{r-1})$.  Define $\SB'_{\pm} \subset \SM$ by 
$w(\SB'_{\pm}) = w'_{\pm}(\SB)$. We have

\begin{equation*}
\sum_{\nu \in \SB, \nu' \in \SM}s_{\nu,\nu'}
      = \sum_{\nu \in \SB, \nu' \in \SB'_{\pm}}s_{\nu,\nu'}
           + \sum_{\nu \in \SB, \nu' \in \SM - \SB'_{\pm}}s_{\nu,\nu'}.
\end{equation*}
For $k = 1, \dots, r$, let $X^{\pm}_k = X_k' \cup X_k''$ with
\begin{align*}
X'_k &= \{ (\nu,\nu') \in \SM^2 \mid w(\nu) \le \nu_0, b(\nu) = k, b(\nu') = k \mp 1 \}, \\
X''_k &= \{(\nu,\nu') \in \SM^2 \mid w(\nu) > \nu_0, w(\nu') > \nu_0, b(\nu) = b(\nu') = k\}, 
\end{align*}
and put
\begin{align*}
A^{\pm}_k &= \sum_{\substack{(\nu, \nu') \in X^{\pm}_k \\ \nu \in \SB, \nu' \in \SB_{\pm}'}}
              s_{\nu,\nu'},   \\
B^{\pm}_k &= \sum_{\substack{(\nu,\nu') \in X^{\pm}_k \\ 
                  \nu \in \SB, \nu' \in \SM - \SB_{\pm}'}}s_{\nu,\nu'}.
\end{align*}
By (4.3.2) we have $\sum_{k=1}^r(A_k^{\pm} + B_k^{\pm}) = 
      \sum_{\nu \in \SB, \nu' \in \SM }s_{\nu,\nu'}$. 
By our choice of $\SB$ and $\SB'_{\pm}$, the $k$-th row of $w(\SB)$ consists of 
$\{ (k, w_k(i)) \mid (k,i) \in \SB\}$, and $(k\mp 1)$-row of $w(\SB'_{\pm})$ consists of 
$\{(k\mp 1, w_k(i)) \mid (k,i) \in \SB \}$.   
Assume that $k > s$.  
It is easy to see that

\begin{align*}
\tag{4.3.7}
A^{\pm}_k &\le  a(a+1)/2 - \sharp\{ 1 \le i \le a  \mid \ve^{(k)}_{w_k(i),\pm} = 0 \}, \\
B^{\pm}_k &\le a(m - a).
\end{align*}  
Hence 
\begin{align*}
\tag{4.3.8}
A^{\pm}_k + B^{\pm}_k &\le ma - a(a+1)/2 + \sum_{i = 1}^a \ve^{(k)}_{w_k(i),\pm} \\
              &= \sum_{i=1}^a(m - i + \ve^{(k)}_{w_k(i),\pm}).
\end{align*}
A similar formula as (4.3.8) holds for the case where $k \le s$ by replacing 
$a$ by $a+1$. 
By summing up those formulas, we have
\begin{equation*}
\sum_{k=1}^r(A_k^{\pm} + B_k^{\pm}) \le 
              \sum_{k=1}^s\sum_{i=1}^{a+1}(m - i + \ve^{(k)}_{w_k(i),\pm}) 
               + \sum_{k = s+1}^r\sum_{i = 1}^a(m - i + \ve^{(k)}_{w_k(i),\pm}).
\end{equation*}
Thus (4.3.6) holds, and so (4.3.3) follows.
This proves the first assertion of the proposition.  The second assertion 
follows from (4.1.3).   
\end{proof}

\para{4.4.}
We shall determine the polynomial $u^{\pm}_{\Bla,\Bla}(\Bt)$. 
In the proof of Proposition 4.3, 
the equality $\Bmu = \Bla$ holds if and only if  
$w(\Bla) = \Bla$ and the equality holds for each $k$ in the formulas (4.3.7), 
namely, $s_{\nu,\nu'} = 1$ for any $s_{\nu,\nu'}$ appearing in the expression of 
$A^{\pm}_k, B^{\pm}_k$. It follows that 
\begin{equation*}
u^{\pm}_{\Bla,\Bla}(\Bt) = \sum_{w \in S_{\Bla}}\ve(w)\prod_{\nu'} 
            (-t_{\nu'})^{s_{w\iv(\nu'),w\iv(\nu)}}.
\end{equation*} 
Let $S'_{\Bla}$ be the subgroup of $S^r_m$ as given in 4.1.   
In order to obtain the equality for $B_k^{\pm}$ in (4.3.7) for any $a$, 
we must have $w \in S_{\Bla}'$.
In that case, $s_{w\iv(\nu'),w\iv(\nu)} = 1$ only when the pair $(\nu, \nu')$ 
satisfies the condition that  
\begin{align*}
  \nu > \nu_0, \ \nu = (k,i), \ \nu' = (k, j), \ i < j, \ w\iv(i) > w\iv(j). 
\end{align*}
Since $t_{\nu'} = t_{b(\nu)}$ in this case, we have
\begin{equation*}
\tag{4.4.1}
u^{\pm}_{\Bla,\Bla}(\Bt) = \prod_{k=1}^r\sum_{w_k \in S_{\la'_k}}t_k^{\ell(w_k)} 
                         = v'_{\Bla}(\Bt),
\end{equation*}           
where $\ell(w_k)$ is the length function of $S_{\la'_k}$. 
\par 
By taking Proposition 4.3 and 
(4.4.1) into account, we have an expression 
\begin{equation*}
\tag{4.4.2}
\wt P^{\pm}_{\Bla}(x;\Bt) = s_{\Bla}(x) + 
         \sum_{\Bmu <  \Bla}w^{\pm}_{\Bla,\Bmu}(\Bt)s_{\Bmu}(x)
\end{equation*} 
with $w^{\pm}_{\Bla,\Bmu}(\Bt) \in \BZ[\Bt]$.

%%%
%%%
%%%
\par\medskip
\section{Closed formula for Hall-Littlewood functions -- continued}

\para{5.1.}
In this section, we discuss the expansion of $\wt Q^{\pm}_{\Bla}$ in terms of 
$q^{\pm}_{\Bmu}$. 
\par
For given $1 \le a \le r$ and $m \ge 1$, we define a subset $\SM^{(a)}$ of $\SM$ by 
removing $(1,1), \dots, (a-1,1)$ from $\SM$.   
Note that if $a = 1$, $\SM^{(1)}$ coincides with the original $\SM$.
The total order on $\SM^{(a)}$ is 
inherited from $\SM$. 
We consider  an $r$-partition $\Bla = (\la^{(1)}, \dots, \la^{(r)})$, 
where $\la^{(k)} = (\la^{(k)}_2, \la^{(k)}_3, \dots, \la^{(k)}_m)$ for $k < a$ 
and $\la^{(k)} = (\la^{(k)}_1, \dots, \la^{(k)}_m)$ for $k \ge a$.  Thus we have 
a bijective correspondence $\la^{(k)}_i \lra (k,i) \in \SM^{(a)}$.
In this case, we say that $\Bla$ is compatible with $\SM^{(a)}$. 
For such $\Bla$, one can construct a polynomial 
$R^{\pm}_{\Bla}(x;\Bt)$ by extending the previous definition, 
where $x = \{ x^{(k)}_i \mid (k,i) \in \SM^{(a)} \}$.  
(Here we consider $S_{\Bm} = S_{m -1}^{a-1} \times S_m^{r-a+1}$ as 
the permutation group of the variables 
$\{ x_{\nu} \mid \nu \in \SM^{(a)}\}$ for $\Bm = (m-1, \dots, m-1, m, \dots, m)$.)
\par
In the following discussion, by fixing $a$, we write $\SM^{(a)}$ as $\SM$. 
We discuss separately  the ``$+$''case and the ``$-$''case.  First consider the
``$-$''case. Then $\wt Q^-_{\Bla}$ is defined by using the formula (4.2.5).
Let $b \le r$ be the smallest integer such that 
$ b \ge a$ and that $\la^{(b)} \ne \emptyset$, 
hence $\la^{(b)}_1 \ne 0$. If such $b$ does not exist, i.e., $\la^{(a')} = \emptyset$ 
for any $r \ge a' \ge a$, we put $b = r$. 
\par
Let $\Bmu = (\mu^{(1)}, \dots, \mu^{(r)})$ be the $r$-partition compatible with 
$\SM^{(b+1)}$ 
obtained from $\Bla$ by removing $\{ \la^{(a')}_1 \mid a \le a' \le b\}$.  Thus we have 
$\mu^{(k)}= (\la^{(k)}_2, \la^{(k)}_3, \dots, \la^{(k)}_m)$ for $1 \le k \le b$,
and $\Bmu^{(k)} = \Bla^{(k)}$ otherwise. 
Put $\SM' = \SM ^{(b+1)}$, and consider the polynomial $\wt Q^-_{\Bmu}(x';\Bt)$ for 
$x' = (x_{\nu})_{\nu \in \SM'}$.
(Note that if $b = r$, $\SM'$ coincides with $\SM^{(1)}$, but by replacing $m$ by $m-1$.)  
\par
$\Bm'$ is defined for $\SM '$ similarly to $\Bm$ and we have 
$S_{\Bm'} =  S_{m-1}^{b} \times S_m^{r-b}$.  Hence 
$S_{\Bm}/S_{\Bm'} \simeq [1,m]^{b-a+1}$, and we express the elements in 
$S_{\Bm}/S_{\Bm'}$ as $\Bi = (i_a, i_{a+1}, \dots, i_b) \in [1,m]^{b-a+1}$.
For 
$\Bi \in S_{\Bm}/S_{\Bm'}$, 
we denote by 
$\wt Q^{[\Bi],-}_{\Bmu}$ the polynomial obtained from $\wt Q^-_{\Bmu}$ 
by replacing the variables 
$x^{(k)}_2, \dots, x^{(k)}_m$ by 
$x^{(k)}_1, \dots, \wh x^{(k)}_{i_k}, \dots, x^{(k)}_m$ ($\wh x^{(k)}_{i_k}$ is removed) 
for $a \le k \le b$,
and leaving variables in other rows unchanged.  
We have the following lemma.
%%%%
%%%%
\begin{lem} %%%% Lemma 5.2
Under the notation above, we have
\begin{equation*}
\wt Q^-_{\Bla} = \sum_{\Bi \in S_{\Bm}/S_{\Bm'}}(x_{i_b}^{(b)})^{\la^{(b)}_1 - \ve^{(b)}_{1,-}}
              g^{(a)}_{\Bi,-}\wt Q^{[\Bi],-}_{\Bmu}, 
\end{equation*}
where $g^{(a)}_{\Bi,-}$ for $\Bi = (i_a, \dots, i_b) \in [1,m]^{b-a+1}$ 
 is given by 
\begin{equation*}
\tag{5.2.1}
g^{(a)}_{\Bi,-}(x;\Bt) = \frac{\displaystyle\biggl(\prod_{\substack{a \le k < b }}
      \prod_{\substack{j \ge 1 \\  j \ne i_{k+1}}}
(x^{(k)}_{i_k} - t_kx^{(k+1)}_j)\biggr) 
    \prod_{\substack{j \ge 1}}
    (x_{i_b}^{(b)} - t_bx^{(b + 1)}_j)}
     {\displaystyle\prod_{\substack{a \le k \le b }}
      \prod_{\substack{ j \ne i_k}}(x^{(k)}_{i_k} - x^{(k)}_j)} 
\end{equation*}
if $b < r$, 
\begin{equation*}
\tag{5.2.2}
g^{(a)}_{\Bi,-}(x;\Bt) = \frac{\displaystyle\biggl(\prod_{\substack{a \le k < r}}
      \prod_{\substack{j \ge 1 \\  j \ne i_{k+1}}}
(x^{(k)}_{i_k} - t_kx^{(k+1)}_j)\biggr) 
    \prod_{\substack{ j \ge 2}}
    (x_{i_r}^{(r)} - t_rx^{(1)}_j)} 
     {\displaystyle\prod_{\substack{a \le k \le r }}
    \prod_{\substack{ j \ne i_k}}(x^{(k)}_{i_k} - x^{(k)}_j)} 
\end{equation*}
if $b = r$ and $a > 1$,  
\begin{equation*}
\tag{5.2.3}
g^{(1)}_{\Bi,-}(x;\Bt) = \frac{\displaystyle (1 - t_0) 
          \biggl(\prod_{\substack{1 \le k < r}}
      \prod_{\substack{j \ge 1 \\  j \ne i_{k+1}}}
(x^{(k)}_{i_k} - t_kx^{(k+1)}_j)\biggr) 
    \prod_{\substack{ j \ge 2}}
    (x_{i_r}^{(r)} - t_rx^{(1)}_j)} 
     {\displaystyle\prod_{\substack{1 \le k \le r }}
    \prod_{\substack{ j \ne i_k}}(x^{(k)}_{i_k} - x^{(k)}_j)} 
\end{equation*}
if $b = r$ and $a = 1$.
\end{lem}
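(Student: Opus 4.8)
The plan is to read the identity directly off the symmetrized definition (4.1.2) of $R^-_{\Bla}$, combined with the normalization $\wt Q^-_{\Bla}=(1-t_0)^{j_0}v'_{\Bla}(\Bt)\iv R^-_{\Bla}$ of (4.2.5), by peeling off the variables $x^{(a)}_1,\dots,x^{(b)}_1$ occupying the first positions $(k,1)$, $a\le k\le b$. First I would check that the summand of $R^-_{\Bla}$ splits as a pointwise product
\begin{equation*}
\frac{\prod_{1\le k\le r}(x^{(k)})^{\la^{(k)}-\ve^{(k)}_-}\prod_{\nu\in\SM}I^-_{\nu}(x;\Bt)}
{\prod_{1\le k\le r}\prod_{i<j}(x^{(k)}_i-x^{(k)}_j)}
   = \Bigl(\prod_{k=a}^{b}\Phi_k\Bigr)\,F_{\Bmu},
\end{equation*}
where $F_{\Bmu}$ is the summand for $\Bmu$ on $\SM'$ and $\Phi_k$ gathers precisely the factors containing $x^{(k)}_1$: the monomial $(x^{(k)}_1)^{\la^{(k)}_1-\ve^{(k)}_{1,-}}$, the cross-row factor $I^-_{(k,1)}$, and the Vandermonde factors $\prod_{j>1}(x^{(k)}_1-x^{(k)}_j)$. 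The bookkeeping rests on (4.1.1): since $\la^{(k)}=\emptyset$ for $a\le k<b$, the factor $I^-_{(k,1)}$ is the product $\prod_{j\ge2}(x^{(k)}_1-t_kx^{(k+1)}_j)$, which omits the next peeled variable $x^{(k+1)}_1$; and no $x^{(k)}_1$ occurs in any other $I^-_{\nu}$, as one checks from the index ranges there.

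The crux is that $\prod_{k=a}^{b}\Phi_k$ is invariant under $S_{\Bm'}$: each product occurring in $\Phi_k$ ranges over a complete row of $\SM'$ (all of the adjacent row $k+1$, and all of the non-peeled part of row $k$), which $S_{\Bm'}$ merely permutes among itself. Consequently the inner symmetrization factors,
\begin{equation*}
\sum_{w'\in S_{\Bm'}}w'\Bigl(\prod_{k=a}^{b}\Phi_k\cdot F_{\Bmu}\Bigr)
   = \Bigl(\prod_{k=a}^{b}\Phi_k\Bigr)R^-_{\Bmu}.
\end{equation*}
Writing $\sum_{w\in S_{\Bm}}=\sum_{\Bi\in S_{\Bm}/S_{\Bm'}}\s_{\Bi}\sum_{w'\in S_{\Bm'}}$ and applying the coset representative $\s_{\Bi}$ (which sends $x^{(k)}_1\mapsto x^{(k)}_{i_k}$ and relabels the remaining indices of the peeled rows) carries $R^-_{\Bmu}$ to $R^{[\Bi],-}_{\Bmu}$ and $\prod_k\Phi_k$ to $(x^{(b)}_{i_b})^{\la^{(b)}_1-\ve^{(b)}_{1,-}}g^{(a)}_{\Bi,-}$; this already yields the asserted identity at the level of $R^-$.

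It then remains to pass from $R^-$ to $\wt Q^-$ through (4.2.5), which multiplies the $\Bi$-th term by $(1-t_0)^{j_0-j_0'}v'_{\Bmu}(\Bt)v'_{\Bla}(\Bt)\iv$, where $j_0'$ is the exponent attached to $\Bmu$. The three forms (5.2.1)--(5.2.3) of $g$ are governed by the last cross-row factor $I^-_{(b,1)}$: for $b<r$ it is the full product $\prod_{j\ge1}(x^{(b)}_1-t_bx^{(b+1)}_j)$ over the complete row $b+1$; for $b=r$ the cyclic identification $k+1\equiv1$ sends it to row $1$, whose $\SM'$-indices begin at $2$, whence $\prod_{j\ge2}(x^{(r)}_{i_r}-t_rx^{(1)}_j)$; and when moreover $a=1$ one has $\Bla=(-,\dots,-,\la^{(r)})$, so the whole cycle is peeled and $m$ drops by one in row $1$ as well. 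The hard part will be this final normalization count: I expect to verify that $(1-t_0)^{j_0-j_0'}v'_{\Bmu}/v'_{\Bla}$ equals $1$ in the cases $b<r$ and $b=r,\ a>1$, while in the closed-cycle case $a=1,\ b=r$ it equals $1-t_0$, arising from $j_0-j_0'=1$. Pinning down $j_0$ and $v'$ in the $\SM^{(a)}$-setting, and matching the index ranges produced by the wrap-around in $\s_{\Bi}$, is the delicate step.
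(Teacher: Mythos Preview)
Your proposal is correct and takes essentially the same approach as the paper's proof. The paper also isolates the factor $(x^{(b)}_1)^{\la^{(b)}_1-\ve^{(b)}_{1,-}}g^{(a)}_{\Bi_0,-}$ (your $\prod_{k=a}^b\Phi_k$), observes its $S_{\Bm'}$-stability, decomposes $S_{\Bm}$ over $S_{\Bm'}$, and then tracks the $(1-t_0)^{j_0}$ normalization, noting---exactly as you do---that $j_0$ is unchanged in cases (5.2.1) and (5.2.2) while it drops by one in case (5.2.3); the only cosmetic difference is that the paper works directly with the reduced sum (4.2.2), invoking $S'_{\Bla}=S'_{\Bmu}$ so that $v'_{\Bla}=v'_{\Bmu}$ cancels automatically, whereas you carry $v'_{\Bla},v'_{\Bmu}$ explicitly.
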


\begin{proof}
Here $S_{\Bm'}$ is the stabilizer of the variables $x^{(k)}_1$ for $a \le k \le b$ 
in $S_{\Bm}$. Note that $S'_{\Bla} = S'_{\Bmu} \subset S_{\Bm'}$. First assume that $b < r$.  
Let $\Bi_0 = (1, \dots, 1) \in [1,m]^{b-a+1}$.  
Since $(x^{(b)}_1)^{\la^{(b)}_1 - \ve^{(b)}_{1,-}}g^{(a)}_{\Bi_0,-}$ is stable by $S_{\Bm'}$ 
we have 
\begin{align*}
(1 - t_0)^{j_0}\sum_{w \in S_{\Bm'}/S'_{\Bmu}}&w\biggl\{
     \prod_{1 \le k \le r}(x^{(k)})^{\la^{(k)} - \ve^{(k)}}
            \prod_{\nu \le \nu_0} I^-_{\nu}(x;\Bt)\big/
          \prod_{(k,i) \le \nu_0}\prod_{i < j}(x^{(k)}_i - x^{(k)}_j)
             \biggr\}  \\
      &= (x^{(b)}_1)^{\la^{(b)}_1 - \ve^{(b)}_{1,-}}g^{(a)}_{\Bi_0,-}\wt Q^-_{\Bmu}(x';\Bt).
\end{align*}
(Here $j_0$ used in the definition of $\wt Q^-_{\Bla}$ 
coincides with $j_0$ used for $\wt Q^-_{\Bmu}$.) 
Hence
\begin{equation*}
\tag{5.2.4}
\wt Q^-_{\Bla}(x;\Bt) = \sum_{w \in S_{\Bm}/S_{\Bm'}}w\left\{ 
      (x^{b)}_1)^{\la^{(b)}_1 - \ve^{(b)}_{1,-}}g^{(a)}_{\Bi_0}\wt Q^-_{\Bmu}(x';\Bt)\right\}. 
\end{equation*}
The lemma follows from this. 
Next assume that $b = r$. 
Since $\prod_{j \ge 2}(x^{(r)}_1 - t_rx^{(1)}_j)$ is stable by the action of 
$S_{m-1} \times S_{m-1}$, 
$(x^{(r)}_1)^{\la^{(r)}_1 - \ve^{(r)}_{1,-}}g^{(a)}_{\Bi_0,-}$ is again stable by 
$S_{\Bm'}$. 
Then a similar argument works.
Note that in the case of (5.2.2), $j_0$ is common for $\wt Q^-_{\Bla}$ and for
$\wt Q^-_{\Bmu}$, but in the case of (5.2.3), 
the discrepancy for $j_0$ between $\wt Q^-_{\Bla}$ 
and $\wt Q^-_{\Bmu}$ occurs.   
The lemma is proved.  
\end{proof}

\remark{5.3.} \
A similar formula was proved in Lemma 3.3 in [S1]. 
But in our definition of $R^{\pm}_{\Bla}(x;\Bt)$, we can not
take $b = a$ if $\la^{(a)}_1 = 0$ for $a < r$ since the product 
$\prod_{j \ge 2}(x_1^{(a)} - t_ax_j^{(a+1)})$ is not stable 
by the action of $S_{m-1} \times S_m$. 

\para{5.4.}
We consider the ``$+$''case. Let $\SM = \SM^{(a)}$ be as in 5.1.  
In  this case, we assume that $\la_2^{(1)} \ne 0$.
Then $\wt Q^+_{\Bla}$ is defined as in (4.2.5). 
Put $\SM' = \SM^{(a+1)}$, and consider $\Bm'$ with respect to $\SM '$.
We have $S_{\Bm'} = S_{m-1}^a \times S_m^{r-a}$, and $S_{\Bm}/S_{\Bm'} \simeq [1,m]$.  
Put, for $i = 1, \dots, m$,
\begin{align*}
g^{(a)}_{i,+}(\Bt) = \prod_{\substack{ (a-1, j) \in \SM }}
                   (x_i^{(a)} - t_{a-1}x_j^{(a-1)})
             \big / \prod_{\substack{ j \ne i}}(x^{(a)}_i - x^{(a)}_j).
\end{align*}
Here the condition $(a-1, j) \in \SM$ is given by $j \ge 2$ if $a > 1$ and
$j \ge 1$ if $a = 1$.  Then the product $\prod_{j \ge 2}(x^{(a)}_1 - t_{a-1}x^{(a-1)}_j)$ 
(resp. $\prod_{j \ge 1}(x^{(1)}_1 - t_{r}x^{(r)}_j)$) for $a > 1$ (resp. $a = 1$)
is stable by the action of $S_{\Bm'}$. (Note that the condition $j \ge 1$ in the $a = 1$ case
comes from the assumption that $\la^{(1)} \ne \emptyset$.  If $\la^{(1)} = \emptyset$, 
we need the condition $j \ge 2$, in which case, the product is not stable by 
$S_{\Bm'}$.)   
\par
Now $\Bmu$ is defined as in 5.1, and $\wt Q^{[i],+}_{\Bmu}$ can be defined for 
$i = 1, \dots, m$ (apply 5.1 to the case where $b = a$).  
The following formula can be proved in a similar way as in Lemma 5.2
(Note, by the condition $\la^{(1)}_2 \ne 0$, that $j_0$ is common for $\Bla$ and $\Bmu$.)

\begin{lem}  %%%%  Lemma 5.5
Assume that $\la^{(1)}_2 \ne 0$. 
Then we have
\begin{equation*}
\wt Q^+_{\Bla} = 
  \sum_{i = 1}^m(x_i^{(a)})^{\la_1^{(a)}-\ve^{(a)}_{1,+}}g_{i,+}^{(a)}\wt Q^{[i],+}_{\Bmu}.
\end{equation*}
\end{lem}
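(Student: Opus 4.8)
The plan is to run the argument of Lemma 5.2 in the ``$+$'' situation, exploiting that here one always peels off a \emph{single} column (the case $b = a$), so that no multi-row product intervenes; throughout I write $\SM = \SM^{(a)}$ and $\SM' = \SM^{(a+1)}$. First I would fix the coset decomposition afforded by $S_{\Bm}/S_{\Bm'} \simeq [1,m]$, choosing representatives $w_i$ with $w_i(x_1^{(a)}) = x_i^{(a)}$. Since $\la^{(1)}_2 \ne 0$ forces $\nu_0 \ge (1,2) > (a,1)$, deleting the part $\la^{(a)}_1$ does not move $\nu_0$; hence $S'_{\Bla} = S'_{\Bmu} \subset S_{\Bm'}$ and $v'_{\Bla}(\Bt) = v'_{\Bmu}(\Bt)$, and (as recorded in 5.4) the same hypothesis forces $j_0$ to coincide for $\Bla$ and $\Bmu$, so the prefactors $(1 - t_0)^{j_0}$ of (4.2.5) agree on both sides.

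The crux is to isolate the dependence on $x_1^{(a)}$ in the summand of (4.1.2). This variable occurs there only through the monomial $(x_1^{(a)})^{\la^{(a)}_1 - \ve^{(a)}_{1,+}}$, through the factor $I^+_{(a,1)} = \prod_{j \ge 2}(x_1^{(a)} - t_{a-1}x_j^{(a-1)})$ (the two relevant clauses of (4.1.1) coincide here, as $(a,1) < (a-1,j) \Leftrightarrow 1 < j$), and through the row-$a$ denominator $\prod_{j \ne 1}(x_1^{(a)} - x_j^{(a)})$. The essential observation is that $x_1^{(a)}$ never occupies a ``lower'' slot $x_j^{(k-1)}$ in any other $I^+_\nu$: that would require $\nu = (a+1, i)$ with the index $1$ lying in the range $j > i$, which is impossible. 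Therefore the whole $x_1^{(a)}$-contribution collapses to $(x_1^{(a)})^{\la^{(a)}_1 - \ve^{(a)}_{1,+}} g^{(a)}_{1,+}$, and dividing the summand by it returns precisely the summand defining $R^+_{\Bmu}$ on $\SM'$, because for every $\nu \ne (a,1)$ the factor $I^+_\nu$ is unchanged under $\Bla \mapsto \Bmu$ (in particular $I^+_{(a+1,i)}$ involves only columns $\ge 2$ of row $a$, which are exactly the row-$a$ variables surviving in $\SM'$).

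With this factorization in hand, I would sum exactly as in the derivation of (5.2.4). The inner summation over $S_{\Bm'}$ rebuilds $\wt Q^+_{\Bmu}(x';\Bt)$, and the stability of $(x_1^{(a)})^{\la^{(a)}_1 - \ve^{(a)}_{1,+}} g^{(a)}_{1,+}$ under $S_{\Bm'}$ (which fixes $x_1^{(a)}$ and is symmetric in the remaining row-$(a-1)$ and row-$a$ variables) lets me pull it out, the common factors $(1 - t_0)^{j_0}$ and $v'_{\Bla}(\Bt)\iv$ cancelling. Applying the representatives $w_i$ then sends $x_1^{(a)} \mapsto x_i^{(a)}$, turns $g^{(a)}_{1,+}$ into $g^{(a)}_{i,+}$, and turns $\wt Q^+_{\Bmu}$ into $\wt Q^{[i],+}_{\Bmu}$ by the definition of the latter, yielding the asserted identity. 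I expect the only genuinely delicate point to be the invariance of $j_0$; the factorization itself, while demanding care with the total order on $\SM$, is forced once one notes that $x_1^{(a)}$ fills no lower-variable slot.
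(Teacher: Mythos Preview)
Your approach is correct and mirrors the paper's own argument (which simply refers back to the proof of Lemma~5.2): isolate the $x^{(a)}_1$-dependence as $(x^{(a)}_1)^{\la^{(a)}_1-\ve^{(a)}_{1,+}}g^{(a)}_{1,+}$, observe its $S_{\Bm'}$-stability, sum over $S_{\Bm'}$ to rebuild $\wt Q^+_{\Bmu}$, then pass to cosets. One small slip: your displayed formula $I^+_{(a,1)}=\prod_{j\ge 2}(x^{(a)}_1-t_{a-1}x^{(a-1)}_j)$ and the equivalence $(a,1)<(a-1,j)\Leftrightarrow j>1$ are valid only for $a>1$; when $a=1$ one has $(1,1)<(r,j)$ for all $j\ge 1$, so the product starts at $j=1$, which is exactly how the paper distinguishes the two cases in defining $g^{(a)}_{i,+}$. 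Since you then invoke $g^{(a)}_{1,+}$ as defined in 5.4, the argument still goes through; just correct the range in that sentence for $a=1$.
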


The following lemmas will be used in later discussions.
%%%%
%%%%
\begin{lem}  %%%%  Lemma 5.6
Assume that $a < b$. Consider 
the variables $x^{(k)}$ for $a \le k \le b$, and denote the
action of $S_m$ on the variable $x^{(k)}$ by $S_m^{(k)}$. 
Then we have
\begin{equation*}
\tag{5.6.1}
\sum_{w \in S^{(a)}_m \times \cdots \times S_m^{(b-1)}}
          w\biggl(\prod_{a \le k < b}\prod_{2 \le j \le m}
         \frac{x^{(k)}_1 - t_kx^{(k+1)}_j}{x_1^{(k)} - x^{(k)}_j} \biggr) 
            = |S_{m-1}|^{b-a}.
\end{equation*}
\end{lem}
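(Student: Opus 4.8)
The plan is to prove (5.6.1) by induction on $b-a$, collapsing the chain of factors one link at a time, starting from the left end. Write the summand as $\prod_{a \le k < b} F_k$ with
\[
F_k = \prod_{2 \le j \le m} \frac{x_1^{(k)} - t_k x_j^{(k+1)}}{x_1^{(k)} - x_j^{(k)}}.
\]
The crucial structural point is that, among $F_a, \dots, F_{b-1}$, the variables $x^{(a)}$ occur only in $F_a$: the only other factor involving them would be $F_{a-1}$, which lies outside the range. Hence, for arbitrary fixed $w_{a+1}, \dots, w_{b-1}$, only $F_a$ is affected by $w_a \in S_m^{(a)}$, and the sum over the whole group $S_m^{(a)} \times \cdots \times S_m^{(b-1)}$ factors through the inner sum $\sum_{w_a} w(F_a)$.

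First I would evaluate this inner sum. Since $w_a$ permutes only the $x_i^{(a)}$, applying it sends $x_1^{(a)}$ to $x_{w_a(1)}^{(a)}$ and turns the denominator into $\prod_{p \ne w_a(1)}(x_{w_a(1)}^{(a)} - x_p^{(a)})$, while the numerator depends on $w_a$ only through $i := w_a(1)$, the variables $x^{(a+1)}$ being untouched. Collecting the $(m-1)! = |S_{m-1}|$ permutations with a common value of $w_a(1)$, I get
\[
\sum_{w_a \in S_m^{(a)}} w(F_a) = |S_{m-1}| \sum_{i=1}^m \frac{\prod_{2 \le j \le m}(x_i^{(a)} - t_a x_j^{(a+1)})}{\prod_{p \ne i}(x_i^{(a)} - x_p^{(a)})}.
\]
The numerator is a monic polynomial of degree $m-1$ in $x_i^{(a)}$, so extracting the coefficient of the top power via the Lagrange interpolation identity already used for (2.5.2) shows that the sum over $i$ equals $1$. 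Therefore $\sum_{w_a} w(F_a) = |S_{m-1}|$, a constant independent of every remaining variable.

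Because this scalar does not depend on $w_{a+1}, \dots, w_{b-1}$, it pulls out of the outer sum, leaving the sum over $S_m^{(a+1)} \times \cdots \times S_m^{(b-1)}$ of $\prod_{a+1 \le k < b} F_k$. This has exactly the shape of (5.6.1) with $a$ replaced by $a+1$, so the induction hypothesis evaluates it as $|S_{m-1}|^{b-a-1}$, and multiplying by the factor just extracted gives $|S_{m-1}|^{b-a}$ in total. The base case $b = a+1$ is the single evaluation above, in which $x^{(b)} = x^{(a+1)}$ is genuinely free, not permuted by the group.

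The one point requiring care---and the crux of the argument---is the decoupling. Although $x^{(a+1)}$ is shared between $F_a$ and $F_{a+1}$, summing over $w_a$ first is legitimate precisely because $w_a$ fixes $x^{(a+1)}$; and whatever the later permutations do to $x^{(a+1)}$, the numerator in the displayed inner sum merely has its $m-1$ nodes relabeled, so it remains monic of degree $m-1$ and the $i$-sum stays equal to $1$. Thus the scalar $|S_{m-1}|$ emerges irrespective of $w_{a+1}, \dots, w_{b-1}$, which is exactly what lets the links of the chain be removed one at a time and makes the induction go through.
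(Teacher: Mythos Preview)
Your proof is correct, and it takes a genuinely different route from the paper's.

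The paper does not induct on $b-a$ at all. Instead it multiplies numerator and denominator of each summand by $\prod_{2\le i<j}(x_i^{(k)}-x_j^{(k)})$ so that the common denominator becomes the full Vandermonde $\prod_{a\le k<b}\prod_{i<j}(x_i^{(k)}-x_j^{(k)})$; the sum over $S_m^{b-a}$ then becomes an alternating sum $\sum_w \varepsilon(w)\,w(\cdots)$ in the numerator. Since that numerator is alternating in each block of variables it is divisible by the Vandermonde, and a degree count in $x_1^{(k)}$ shows that only the leading monomial $(x_1^{(k)})^{m-1}$ of $\prod_{j\ge 2}(x_1^{(k)}-t_kx_j^{(k+1)})$ survives after division. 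The resulting expression factors over $k$, and a short single-variable computation gives $|S_{m-1}|$ for each factor.

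Your argument replaces this global antisymmetrisation by a local Lagrange-interpolation step: peel off the leftmost link $F_a$, observe that only $w_a$ touches $x^{(a)}$, and show that for any fixed $w_{a+1}$ the partial sum $\sum_{w_a} w(F_a)$ collapses to the constant $|S_{m-1}|$ because the numerator is monic of degree $m-1$ in the single variable $x_{w_a(1)}^{(a)}$. This is more elementary and explains transparently why the answer is the power $|S_{m-1}|^{b-a}$, one factor per link removed. The paper's method, by contrast, handles all $k$ simultaneously and is closer in spirit to Schur-function manipulations, which fits the surrounding arguments (e.g.\ Lemma~5.7) where degree comparison after antisymmetrisation is used repeatedly.
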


\begin{proof}
If we write $S_m^{(a)} \times \cdots \times S_m^{(b-1)}$ as $S_m^{b-a}$,  
the left hand side of (5.6.1) is equal to
\begin{align*}
        \sum_{w \in S_m^{b-a}}
           &w\biggl(\frac{\prod_{a \le k < b}
           \prod_{j \ge 2}(x^{(k)}_1 - t_kx^{(k+1)}_j)\prod_{2 \le i < j}(x^{(k)}_i - x^{(k)}_j)}
              {\prod_{a \le k < b}\prod_{1 \le i < j \le m}(x^{(k)}_i - x^{(k)}_j)}\biggr) \\  \\
          &= \frac {\sum_{w \in S_m^{b-a}}\ve(w)w\biggl(
          \prod_{a \le k < b}\prod_{j \ge 2}(x^{(k)}_1 - t_kx^{(k+1)}_j)
              \prod_{2 \le i < j}(x^{(k)}_i - x^{(k)}_j)\biggr)}
                  {\prod_{a \le k < b}\prod_{i < j}(x^{(k)}_i - x^{(k)}_j)}. 
\end{align*}
But the sum in the numerator is an alternating polynomial with respect to the 
variables $x^{(k)}$ for $a \le k < b$, 
and so is divisible by $\prod_{a \le k < b}\prod_{i < j}(x^{(k)}_i - x^{(k)}_j)$.  
Hence by comparing the degrees as polynomials with respect to the variables 
$x_1^{(k)}$ for a fixed $k$, 
the last formula is equal to 
\begin{align*}
        \frac{1}{\prod_{a \le k < b}\prod_{i < j}(x^{(k)}_i - x^{(k)}_j)}
                \sum_{w \in S_m^{b-a}}\ve(w)w\biggl(\prod_{a \le k < b}(x^{(k)}_1)^{m-1}
                   \prod_{2 \le i < j}(x^{(k)}_i - x^{(k)}_j)\biggr),
\end{align*}
where the numerator coincides with 
\begin{equation*}
\prod_{a \le k < b}\sum_{w \in S_m^{(k)}}\ve(w)w\biggl(
                   (x^{(k)}_1)^{m-1}
                   \prod_{2 \le i < j}(x^{(k)}_i - x^{(k)}_j)\biggr).
\end{equation*}
If we put $x^{(k)}_i = y_i, S_m^{(k)} = S_m$,  we have 
\begin{align*}
\sum_{w \in S_m}\ve(w)w&\biggl(y_1^{m-1}\prod_{2 \le i < j \le m}(y_i - y_j)\biggr) \\
    &= \sum_{w \in S_m}\ve(w)w\sum_{w' \in S_{m-1}}\ve(w')w'(y_1^{m-1}y_2^{m-2}\cdots y_{m-1}) \\
    &= |S_{m-1}|\sum_{w \in S_m}\ve(w)w(y_1^{m-1}y_2^{m-2}\cdots y_{m-1}) \\
    &= |S_{m-1}|\prod_{i< j}(y_i - y_j).
\end{align*}
The lemma is proved.  
\end{proof}

\begin{lem}   %%%%   Lemma 5.7
Consider three types of variables $x_i, y_i, z_i$ for 
$i = 1, \dots, m$. 
Then the following identity holds.
\begin{equation*}
\tag{5.7.1}
\sum_{w \in S_m}w\biggl(\frac{\prod_{j \ge 2}(x_1 - t_1y_j)\prod_{j \ge 2}(z_1 - t_3x_j)}
                        {\prod_{j \ge 2}(x_1 - x_j)}\biggr)
        = |S_{m -1}|\prod_{j \ge 2}(z_1 - t_1t_3y_j), 
\end{equation*} 
where $S_m$ acts on the variables $x_1, \dots, x_m$ as permutations. 
\end{lem}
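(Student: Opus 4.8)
The plan is to first collapse the symmetrization over $S_m$ to a sum over the single index $w(1)$, and then to evaluate the resulting rational sum by Lagrange's interpolation formula, exactly the tool already invoked for (2.5.2).

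First I would track how $w \in S_m$ acts on the three factors of the summand, keeping in mind that $S_m$ permutes only the $x_i$. Applying $w$ replaces $x_1$ by $x_{w(1)}$ in $\prod_{j \ge 2}(x_1 - t_1 y_j)$ (the $y_j$ are untouched, so this product stays indexed by the same set), turns $\prod_{j \ge 2}(z_1 - t_3 x_j)$ into $\prod_{k \ne w(1)}(z_1 - t_3 x_k)$, and turns $\prod_{j \ge 2}(x_1 - x_j)$ into $\prod_{k \ne w(1)}(x_{w(1)} - x_k)$. Hence the summand depends on $w$ only through $i := w(1)$, and since there are $(m-1)! = |S_{m-1}|$ permutations with a prescribed value of $w(1)$, the left-hand side of (5.7.1) equals
\[
|S_{m-1}| \sum_{i=1}^m \frac{\prod_{j \ge 2}(x_i - t_1 y_j)\,\prod_{k \ne i}(z_1 - t_3 x_k)}{\prod_{k \ne i}(x_i - x_k)}.
\]
It then suffices to show that this inner sum equals $\prod_{j \ge 2}(z_1 - t_1 t_3 y_j)$.

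For the inner sum I would apply Lagrange interpolation to the polynomial $P(X) = \prod_{j \ge 2}(X - t_1 y_j)$, which has degree $m-1 < m$, with nodes $x_1, \dots, x_m$, and then specialize the identity $P(X) = \sum_{i} P(x_i) \prod_{k \ne i}\frac{X - x_k}{x_i - x_k}$ at $X = z_1/t_3$. Writing $z_1/t_3 - x_k = (z_1 - t_3 x_k)/t_3$ introduces a factor $t_3^{-(m-1)}$ into $\prod_{k \ne i}(z_1/t_3 - x_k)$, and the same factor $t_3^{-(m-1)}$ appears in $P(z_1/t_3) = t_3^{-(m-1)}\prod_{j \ge 2}(z_1 - t_1 t_3 y_j)$; cancelling these equal powers turns the interpolation identity precisely into the desired equality for the inner sum. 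Combined with the first step, this yields (5.7.1).

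The computation is short once the interpolation node $X = z_1/t_3$ is chosen, so the only delicate point will be the bookkeeping in the first step: one must check carefully that each of the three factors reindexes under $w$ so as to depend solely on $w(1)$ — in particular that the non-permuted $y_j$ leave the numerator product over $j \ge 2$ intact, while the genuinely permuted factors collapse to products over $k \ne w(1)$. The matching of the two powers of $t_3$ at the specialized point is what cleanly absorbs the separate scalars $t_1, t_3$ into the single product $t_1 t_3$ on the right-hand side; since the end result is a polynomial, the apparent poles at coinciding $x_i$ cancel and the identity holds as stated.
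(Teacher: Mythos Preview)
Your proof is correct and takes a genuinely different route from the paper. The paper rewrites the left-hand side via the antisymmetrization operator $\SA_x$, expands both numerator products $\prod_{j\ge 2}(x_1-t_1y_j)$ and $\prod_{j\ge 2}(z_1-t_3x_j)$ as sums over subsets $I,J\subset[2,m]$, and then tracks which monomials $x_J x_1^{m-1-k}\prod_{2\le i<j}(x_i-x_j)$ survive antisymmetrization; a combinatorial count forces $|J|=|I|$ and identifies the surviving contribution term by term. Your argument instead collapses the $S_m$-sum to a sum over $i=w(1)$ and then recognizes that sum as the Lagrange interpolation identity for the degree-$(m-1)$ polynomial $P(X)=\prod_{j\ge 2}(X-t_1y_j)$ specialized at $X=z_1/t_3$, with the stray powers of $t_3^{m-1}$ cancelling on both sides. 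This is shorter and more conceptual, and it ties in neatly with the interpolation argument already used for (2.5.2); the paper's expansion, on the other hand, makes the monomial structure explicit and is closer in spirit to the degree-tracking arguments used elsewhere in Section~5. Both approaches are self-contained; yours avoids any bookkeeping beyond checking that the summand depends only on $w(1)$.
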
 

\begin{proof}
We define an operator $\SA_x$ on the variables $x_1, \dots, x_m$ by 
\begin{equation*}
\SA_x = \prod_{i < j}(x_i - x_j)\iv \sum_{w \in S_m}\ve(w)w.
\end{equation*} 
Then the left hand side of (5.7.1) can be written as 
\begin{equation*}
\SA_x\biggl(\prod_{j \ge 2}(x_1 - t_1y_j)\prod_{j \ge 2}(z_1 - t_3x_j)
            \prod_{2 \le i < j}(x_i - x_j)\biggr).
\end{equation*}
We can write
\begin{align*}
\prod_{j \ge 2}(x_1 - t_1y_j) &= \sum_{k = 0}^{m -1}(-t_1)^kx_1^{m - k-1}\sum_{\substack{
                   I \subset [2, m] \\ |I| = k}}y_I, \\
\prod_{j \ge 2}(z_1 - t_3x_j) &= \sum_{\ell = 0}^{m -1}(-t_3)^{\ell}z_1^{m -\ell -1}
                     \sum_{\substack{  J \subset [2, m] \\  |J| = \ell}}x_J,  \\
\prod_{j \ge 2}(z_1 - t_1t_3y_j) &= \sum_{k = 0}^{m -1}(-t_1t_3)^kz_1^{m -k-1}
                      \sum_{\substack{J \subset [2, m] \\ |J| = k}}y_J,
\end{align*}
where $y_I = y_{i_1}\cdots y_{i_k}$ for $I = \{ i_1, \dots, i_k\}$, and similarly 
for $x_J$. 

Put $F = \prod_{j \ge 2}(x_1 - t_1y_j)\prod_{j \ge 2}(z_1 - t_3x_j)$.  Then 
\begin{align*}
F = \sum_{k = 0}^{m -1}\sum_{\ell = 0}^{m -1}
         \sum_{\substack{  I \subset [2, m] \\  |I| = k}}
         \sum_{\substack{  J \subset [2, m] \\  |J| = \ell}}
       (-t_1)^kx_1^{m -1-k}y_Ix_J(-t_3)^{\ell}z_1^{m -1-\ell}.
\end{align*}
We compute, for a fixed $k, \ell$, 
\begin{align*}
\sum_{\substack{ J \subset [2,m] \\ |J| = \ell}}
      &x_Jx_1^{m -1-k}\prod_{2 \le i < j}(x_i - x_j) \\
    &= \sum_{\substack{ J \subset [2,m ] \\ |J| = \ell}}x_Jx_1^{m -1-k}
          \sum_{\s \in S_{m -1}}\ve(\s)x_{\s(2)}^{m -2}x_{\s(3)}^{m -3}\cdots x_{\s(m -1)},
\end{align*} 
where $S_{m-1}$ is the stabilizer of $x_1$ in $S_m$. 
We apply the operator $\SA_x $ 
for each monomial $X = x_Jx_1^{m -1-k}x^{m -2}_{\s(2)}\cdots x_{\s(m -1)}$ 
determined by the choice of $J$ and $\s$. 
If $k > \ell$, then $\SA_x(X) = 0$ by the degree reason. 
So assume that $k \le \ell$.   
We write 
\begin{equation*}
x_1^{m -1-k}x_{\s(2)}^{m -2}x_{\s(3)}^{m -3}\cdots x_{\s(m -1)}
   = x_{\s(2)}^{m -2}\cdots x_{\s(k+1)}^{m - k - 1}x_1^{m - k -1}
          x_{\s(k + 2)}^{m -k-2}\cdots x_{\s(m -1)}.
\end{equation*}
Note that if $a_1, \dots, a_m$ are not all distinct in the monomial
$X = x_1^{a_1}\cdots x_m^{a_m}$, then again $\SA_x(X) = 0$. 
It follows that, if $\SA_x(X) \ne 0$, then $X$ must have the form
\begin{equation*}
X = x_{\s(2)}^{m -1}\cdots x_{\s(k+1)}^{m - k}x_1^{m - k -1}
          x^{m -k-2}_{\s(k+2)}\cdots x_{\s(m -2)}, 
\end{equation*}
namely, $\ell = k$ and $J = \{ \s(2), \dots, \s(k+1)\}$.
Write $X$ as $x^{m -1}_{\t(1)}x^{m -2}_{\t(2)}\cdots x_{\t(m -1)}$
for $\t \in S_m$.  Then we have
\begin{equation*}
   \SA_x(X) = \ve(\tau) = (-1)^{k}\ve(\s).
\end{equation*} 
For a fixed $J$, the number of such $\s$ is equal to $k! \times (m -1-k)!$. 
The number of the choices of $J$ is $\binom{m -1}{k}$.  Hence
\begin{equation*}
   \SA_x\biggl(
     \sum_{\substack{ J \subset [2,m] \\ |J| = k}}
              (t_1t_3)^{k}x_Jx_1^{m -1-k}\prod_{2 \le i < j}(x_i - x_j)\biggr)
      = (m -1)!(-t_1t_3)^{k}.
\end{equation*}
It follows that 
\begin{align*}
|S_{m-1}|\iv\SA_x\biggl(F \prod_{2 \le i < j}(x_i - x_j)\biggr)
     &= \sum_{k = 0}^{m -1}\sum_{\substack{  J \subset [2, m] \\  |J| = k}}
           (-t_1t_3)^kz_1^{m -1-k}y_J  \\
     &= \prod_{j \ge 2}(z_1 - t_1t_3y_j).
\end{align*}
This proves (5.7.1).  The lemma is proved.
\end{proof}

\para{5.8.}
Recall the definition of $q^{(k)}_{s,\pm}(x;t)$ in (2.5.1).  
In the ``$-$''case with $k = r$, we define a function $\wt q^{(r)}_{s,-}(x;t)$ by 
\begin{equation*}
\tag{5.8.1}
\wt q^{(r)}_{s,-}(x;t) = \sum_{1 \le i \le m}(x_i^{(r)})^s
   \frac{\prod_{j \ge 2}(x_i^{(r)} - tx_j^{(1)})}{\prod_{j \ne i}(x_i^{(r)} - x_j^{(r)})}
\end{equation*}
for $s \ge 0$.  If $s \ge 1$, we have 
$q^{(r)}_{s,-}(x;t)|_{x^{(1)}_1 = 0} = \wt q^{(r)}_{s,-}(x;t)$.  
We note that $\wt q^{(r)}_{s,-} = 1$ if $s = 0$.  In fact,    
\begin{align*}
\sum_{1 \le i \le m}
   \frac{\prod_{j \ge 2}(x^{(r)}_i - tx^{(1)}_j)}{\prod_{j \ne i}(x^{(r)}_i - x^{(r)}_j)}  
      = |S_{m-1}|\iv \sum_{w \in S^{(r)}_m}w\biggl(
        \frac{\prod_{j \ge 2}(x^{(r)}_1 - tx^{(1)}_j)}
          {\prod_{j \ge 2}(x^{(r)}_1 - x^{(r)}_j)}\biggr) 
      = 1.
\end{align*}
The last equality follows from Lemma 5.6 by applying it to the case 
where $b-a = 1$.  
Thus $q^{(r)}_{s,-}(x;t)|_{x^{(1)}_1 = 0} = \wt q^{(r)}_{s,-}(x;t)$, and 
$\wt q^{(r)}_{s,-} \in \BZ[x;t]$ for $s \ge 0$. 
By using the generating function of $q^{(r)}_{s,-}(x; t_r)$ in (2.5.2), we have 
\begin{equation*}
\tag{5.8.2}
\sum_{s \ge 0}\wt q^{(r)}_{s,-}(x;t)u^s = \prod_{i = 2}^m (1 - tux_i^{(1)})
             \bigg /\prod_{i=1}^m (1 - ux^{(r)}_i). 
\end{equation*}
\par
In the ``$+$''case with $k > 1$, we define $\wt q^{(k)}_{s,+}(x;t)$ by 
\begin{equation*}
\tag{5.8.3}
\wt q^{(k)}_{s,+}(x;t) = \sum_{1 \le i \le m}(x_i^{(k)})^s
   \frac{\prod_{j \ge 2}(x_i^{(k)} - tx_j^{(k-1)})}{\prod_{j \ne i}(x_i^{(k)} - x_j^{(k)})}
\end{equation*}
for $s \ge 0$.
As in the ``$-$''case, we see that 
$q^{(k)}_{s,+}(x;t)|_{x^{(k-1)}_1 = 0} = \wt q^{(k)}_{s,+}(x;t)$, and that 
the generating function of $\wt q^{(k)}_{s,+}$ is given by 
\begin{equation*}
\tag{5.8.4}
\sum_{s \ge 0}\wt q^{(k)}_{s,+}(x;t)u^s = \prod_{i = 2}^m (1 - tux_i^{(k-1)})
             \bigg /\prod_{i=1}^m (1 - ux^{(k)}_i). 
\end{equation*}

In the ``$-$''case, we need the following.

\begin{prop}   %%%%   Prop 5.9
Assume that $a \le b \le r$, and put $d = b-a +1$.  Set 
\begin{equation*}
G^{a,b}_s(x;\Bt) = \sum_{\Bi \in [1,m]^d}(x^{(b)}_{i_b})^{s-\ve}g^{(a)}_{\Bi,-}(x;\Bt),
\end{equation*}
where $\ve = 1$ if $b < r$ and $\ve = 0$ if $b = r$. 
Furthermore assume that $s \ge 1$ if $b < r$. 
Then we have
\begin{equation*}
G^{a,b}_s(x;\Bt) = \begin{cases}
                    q^{(b)}_{s,-}(x;t_b) &\quad\text{ if } b < r, \\ 
                    \wt q^{(r)}_{s,-}(x;t_r)  &\quad\text{ if $b = r$ and $a > 1$},  \\
                    q_s(x^{(r)}; t_0) &\quad\text{ if $b = r$ and $a = 1$},
                   \end{cases}
\end{equation*}
where $t_0 = t_1\cdots t_r$, and  $q_s(x^{(r)};t_0)$ is 
the original $q$-function given in (2.3.4) with respect to the variables $x^{(r)}$. 
\end{prop}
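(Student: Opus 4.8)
The plan is to reduce the size $d=b-a+1$ of the problem one layer at a time, peeling off the lowest row $x^{(a)}$ from the sum. Two identities drive the argument. The first is the elementary leading-coefficient identity: for a polynomial $P$ of degree $\le m-1$ in one variable, $\sum_{i=1}^m P(x^{(a)}_i)\big/\prod_{j\ne i}(x^{(a)}_i-x^{(a)}_j)$ equals the coefficient of the top power, hence equals $1$ when $P$ is monic of degree $m-1$; this is exactly the content of Lemma 5.6 in the case $b-a=1$ (as already used in 5.8). The second is Lemma 5.7, which collapses a summed variable sitting between two factors and replaces the two parameters by their product.

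First I would treat the cases $b<r$ and $b=r,\ a>1$ together. Summing $G^{a,b}_s$ over $i_a$, the only factors carrying $i_a$ are the bond $\prod_{j\ne i_{a+1}}(x^{(a)}_{i_a}-t_ax^{(a+1)}_j)$ in the numerator of $g^{(a)}_{\Bi,-}$, which is monic of degree $m-1$ in $x^{(a)}_{i_a}$, and the Vandermonde factor $\prod_{j\ne i_a}(x^{(a)}_{i_a}-x^{(a)}_j)$ in the denominator. By the leading-coefficient identity the $i_a$-sum is $1$, and the remaining expression is precisely $G^{a+1,b}_s$; when $b=r$ the wrap factor $\prod_{j\ge2}(x^{(r)}_{i_r}-t_rx^{(1)}_j)$ involves only $x^{(r)}$ and $x^{(1)}$ and is untouched as long as $a>1$. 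Iterating gives $G^{a,b}_s=G^{b,b}_s$, and reading off $g^{(b)}_{(i_b),-}$ together with the weight $(x^{(b)}_{i_b})^{s-\ve}$ identifies $G^{b,b}_s$ with $q^{(b)}_{s,-}(x;t_b)$ via (2.5.1) when $b<r$, and with $\wt q^{(r)}_{s,-}(x;t_r)$ via (5.8.1) when $b=r$.

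The case $b=r,\ a=1$ is the crux, and here the cyclic wrap-around forbids the naive collapse at $i_1$: the variables $x^{(1)}$ occur simultaneously in the first bond and in the wrap factor. Instead I would sum over the interior indices $i_2,\dots,i_{r-1}$ in turn. Summing over $i_{k+1}$ fuses the bond from row $k$ to row $k+1$ with the bond from row $k+1$ to row $k+2$; this is exactly the situation of Lemma 5.7, with $x^{(k+1)}$ the summed variable, so the two parameters $t_k,t_{k+1}$ are replaced by $t_kt_{k+1}$ and one row disappears. After the $r-2$ merges a single bond of parameter $t_1\cdots t_{r-1}$ between $x^{(1)}$ and $x^{(r)}$ survives, alongside the wrap factor of parameter $t_r$. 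A final application of Lemma 5.7, now summing over $i_1$, closes the loop and yields $\prod_j(x^{(r)}_{i_r}-t_0x^{(r)}_j)$ with $t_0=t_1\cdots t_r$; multiplying by the surviving prefactor $(1-t_0)$ and the weight $(x^{(r)}_{i_r})^s$ and comparing with (2.3.4) produces $q_s(x^{(r)};t_0)$.

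I expect the last case to be the main obstacle. The delicate points are to check that each merge really matches Lemma 5.7 --- in particular that the excluded index in the ``upper'' factor at each step coincides with the variable being summed --- to follow the multiplicative build-up of the parameters to $t_0$ across the whole cycle, and to verify that the $(1-t_0)$ prefactor and the power $(x^{(r)}_{i_r})^s$ end up in exactly the positions demanded by the classical $q_s$ of (2.3.4). The bookkeeping between the ranges of the form $\prod_{j\ge2}$, which record the peeled variable $x^{(1)}_1$, and the ranges $\prod_{j\ne i}$ required by Lemma 5.7 is where the argument must be carried out most carefully.
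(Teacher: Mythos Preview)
Your proposal is correct and follows essentially the same route as the paper. Both arguments collapse the index sum row by row: for $b<r$ and for $b=r,\,a>1$ the paper invokes Lemma~5.6 once for all inner rows, while you apply its one-step version repeatedly, and both land on $G^{b,b}_s$; for the cyclic case $a=1,\,b=r$ both reduce by iterated applications of Lemma~5.7, the only difference being that the paper peels off row~1 first (fusing $t_1$ with $t_r$ at the outset) whereas you eliminate the interior rows $2,\dots,r-1$ first and close the cycle over $i_1$ at the end. One point worth making explicit in your write-up: for general $\Bi$ the wrap factor in (5.2.3) should read $\prod_{j\ne i_1}(x^{(r)}_{i_r}-t_rx^{(1)}_j)$ after the coset action, which is exactly what your final application of Lemma~5.7 requires.
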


\begin{proof}
First assume that $b < r$. 
We denote by $S_m^{(k)}$ the action of $S_m$ on the variables $x^{(k)}$. 
\begin{align*}
&G^{a,b}_s(x;\Bt)  \\ 
&= \sum_{\Bi \in [1,m]^d}(x^{(b)}_{i_b})^{s-1}
    \frac{\prod_{a \le k < b}\prod_{\substack{j \ge 1 \\  j \ne i_{k+1}}}
(x^{(k)}_{i_k} - t_kx^{(k+1)}_j)\cdot 
    \prod_{j \ge 1}
    (x_{i_b}^{(b)} - t_bx^{(b + 1)}_j)}
     {\prod_{a \le k \le b}\prod_{j \ne i_k}(x^{(k)}_{i_k} - x^{(k)}_j)}  \\ \\
    &= |S_{m-1}|^{-d}\!\!\!\!\!\sum_{w \in S_m^{(a)} \times \cdots \times S_m^{(b)}}w\biggl(
    \frac{\prod_{a \le k < b}\prod_{\substack{j \ge 2}}
(x^{(k)}_1 - t_kx^{(k+1)}_j) \cdot  
    (x^{(b)}_1)^{s-1}\prod_{j \ge 1}
    (x_1^{(b)} - t_bx^{(b + 1)}_j)}
     {\prod_{a \le k \le b}\prod_{j \ge 2}(x^{(k)}_1 - x^{(k)}_j)}\biggr)  \\  \\
   &= |S_{m-1}|^{-d}\sum_{w' \in S_m^{(b)}}w'\biggl(   
            A \frac{(x_1^{(b)})^{s-1}\prod_{j \ge 1}(x_1^{(b)} - t_bx_j^{(b+1)})}
                 {\prod_{j \ge 2}(x_1^{(b)} - x_j^{(b)})}\biggr),   
\end{align*}
where
\begin{equation*}
A = \sum_{w \in S_m^{(a)} \times \cdots \times S_m^{(b-1)}}
       w \biggl(\frac{\prod_{a \le k < b}\prod_{j \ge 2}(x_1^{(k)} - t_kx_j^{(k+1)})}
                   {\prod_{a \le k < b}\prod_{j \ge 2}(x_1^{(k)} - x_j^{(k)})}\biggr). 
\end{equation*}
By Lemma 5.6, we have $A = |S_{m -1}|^{b-a}$.
Since 
\begin{equation*}
\sum_{w' \in S_m^{(b)}}w'\biggl(   
            \frac{(x_1^{(b)})^{s-1}\prod_{j \ge 1}(x_1^{(b)} - t_bx_j^{(b+1)})}
                 {\prod_{j \ge 2}(x_1^{(b)} - x_j^{(b)})}\biggr)
        = |S_{m-1}|q^{(b)}_{s,-}(x;t_b),
\end{equation*}
we obtain the required formula.
A similar argument works also for the case where $b = r$ and $a > 1$.  
In that case, the formula
in the last step is given by
\begin{equation*}
\sum_{w' \in S_m^{(r)}}w'\biggl(   
            \frac{(x_1^{(r)})^{s}\prod_{j \ge 2}(x_1^{(r)} - t_rx_j^{(1)})}
                 {\prod_{j \ge 2}(x_1^{(r)} - x_j^{(r)})}\biggr) 
      = |S_{m-1}|\wt q^{(r)}_{s,-}(x;t_r).
\end{equation*} 
Thus the assertion holds.  
\par
Finally consider the case where $b = r$ and $a = 1$.
In this case, we have
\begin{align*}
&(1 - t_0)\iv G^{1,r}_s(x;\Bt)  \\ 
&= |S_{m-1}|^{-r} \sum_{w \in S_m^r}w\biggl((x_1^{(r)})^s
        \prod_{1 \le k < r}\prod_{j \ge 2}
          \frac{x_1^{(k)} - t_kx_j^{(k+1)}}{x_1^{(k)} - x_j^{(k)}}
    \prod_{j \ge 2}\frac{x_1^{(r)} - t_rx_j^{(1)}}{x_1^{(r)} - x_j^{(r)}}\biggr).
\end{align*}
By applying Lemma 5.7 for $x_i = x_i^{(1)}, y_i = x_i^{(2)}, z_i = x_i^{(r)}$, 
the right hand side turns out to be
\begin{align*}
|S_{m -1}|^{-r+1} \sum_{w \in S_m^{r-1}}w\biggl((x_1^{(r)})^s
        \prod_{2 \le k < r}\prod_{j \ge 2}
          \frac{x_1^{(k)} - t_kx_j^{(k+1)}}{x_1^{(k)} - x_j^{(k)}}
    \prod_{j \ge 2}\frac{x_1^{(r)} - t_1t_rx_j^{(2)}}{x_1^{(r)} - x_j^{(r)}}\biggr).
\end{align*}
Thus by repeating this procedure, we see that
\begin{align*}
(1 - t_0)\iv G^{1,r}_s(x;\Bt) 
       &= |S_{m -1}|\iv\sum_{w \in S_m}w\biggl((x^{(r)}_1)^s\prod_{j \ge 2}
          \frac{x_1^{(r)} - t_1\cdots t_rx_j^{(r)}}{x_1^{(r)} - x_j^{(r)}}\biggr)  \\
                 &= (1 -t_0)\iv q_s(x^{(r)};t_0).
\end{align*}
Hence the assertion holds.  The proposition is proved.
\end{proof}

\remark{5.10.} Let $\la = (\la_1, \dots, \la_m)$ be a partition, and 
$Q_{\la}(y;t)$ the original Hall-Littlewood function. 
In this case it is known by [M, III, 2.14] that
\begin{equation*}
\tag{5.10.1}
Q_{\la}(y;t) = \sum_{i = 1}^m y_i^{\la_i}g_iQ^{[i]}_{\mu}(y;t)
\end{equation*}
with 
\begin{equation*}
g_i = (1-t)\prod_{j \ne i}\frac{y_i - ty_j}{y_i - y_j},
\end{equation*}
where $\mu = (\mu_2, \dots, \mu_m)$ and $Q^{[i]}_{\mu}$ is defined similarly 
to 5.1. 
Now assume that $\Bla = (-, \dots, -,\la^{(r)}) \in \SP_{n,r}$. 
We compare the formula (5.10.1) with Lemma 5.2.  Then by using 
a similar argument as in the proof of the third formula in Proposition 5.9,
one can show, by induction on $m$, that  $\wt Q^-_{\Bla}(x;\Bt)$ coincides with 
$Q_{\la^{(r)}}(x^{(r)}; t_0)$ for $t_0 = t_1\cdots t_r$.    
By Theorem 3.5 we know that $Q^-_{\Bla}(x;\Bt) = Q_{\la^{(r)}}(x^{(r)};t_0)$. 
It follows, for a special case $\Bla = (-,\dots, -,\la^{(r)})$, 
that we obtain
\begin{equation*}
\tag{5.10.2}
\wt Q_{\Bla}^-(x;\Bt) = Q_{\Bla}^-(x;\Bt).
\end{equation*}   

\para{5.11.}
Based on the discussion in 5.8, we define functions 
$\Psi^{(k,i)}_{-}(u), \Psi^{(k)}_+(u)$ associated to $\SM$ as follows. 
Set $\vD = \vD(\Bla) = \{ (k,i) \in \SM  \mid \la^{(k)}_i  \ne 0 \}$. Let  
$\vD_0$ be the subset of $\vD$ consisting of $(r,i)$ such that 
$\la^{(k)}_i = 0$ for $1 \le k < r$, and set $\vD_1 = \vD - \vD_0$. 
\par
In the ``$-$'' case, for $(k,i) \in \vD_1$, set
\par\medskip
\begin{equation*}
\tag{5.11.1}
\Psi^{(k,i)}_{-}(u) = \prod_{\substack{i \le j \\  (k + 1,j) \in \SM }}
         (1 - t_k ux_j^{(k+1)}) \bigg/
                          \prod_{\substack{i \le j \\ (k,i) \in \SM }}(1 - ux_j^{(k)}). 
\end{equation*}
Also set, for $(r,i) \in \vD_0$ and $t_0 = t_1\cdots t_r$,  
\begin{equation*}
\tag{5.11.2}
\Psi^{(r,i)}_{-}(u) = \prod_{\substack{i \le j \\ (r,i) \in \SM }}
                 (1 - t_0ux_j^{(r)}) \bigg/
                          \prod_{\substack{i \le j \\ (r,i) \in \SM }}
                 (1 - ux_j^{(r)}). 
\end{equation*}
In the ``$+$''case, for $1 \le k \le r$, set 
\begin{equation*}
\tag{5.11.3}
\Psi^{(k)}_{+}(u) = \prod_{(k - 1,i) \in \SM}(1 - t_{k-1} ux_i^{(k-1)}) \bigg/
                          \prod_{(k,i) \in \SM }(1 - ux_i^{(k)}). 
\end{equation*}
\newpage
Then we have
\begin{equation*}
\tag{5.11.4}
\Psi^{(k,1)}_{-}(u) =       \begin{cases}
                    \displaystyle\sum_{s = 0}^{\infty} q^{(k)}_{s,-}(x;t_k)u^s 
                      &\quad\text{ if $(k,1) \in \vD_1$ and $k < r$,} \\ \\
                    \displaystyle\sum_{s = 0}^{\infty} \wt q^{(r)}_{s,-}(x;t_r)u^s 
                      &\quad\text{ if $(k,1) \in \vD_1$,  $k = r$ and $a > 1$,} \\  \\
                    \displaystyle\sum_{s = 0}^{\infty} q_{s}(x^{(r)};t_0)u^s 
                      &\quad\text{ if $(k,1) \in \vD_0$, $k = r$,} 
                             \end{cases} 
\end{equation*}
\begin{equation*}
\tag{5.11.5}
\Psi^{(k)}_+(u) =  \begin{cases} 
                    \displaystyle\sum_{s = 0}^{\infty} q^{(1)}_{s,+}(x;t_r)u^s 
                        &\quad\text{ if $k = 1$ and $a = 1$,} \\  \\ 
                     \displaystyle\sum_{s = 0}^{\infty} \wt q^{(k)}_{s,+}(x;t_{k-1})u^s 
                        &\quad\text{ if } 1 < a \le k \le r. 
                    \end{cases}
\end{equation*}
\par
We introduce infinitely many variables $u_1^{(k)}, u_2^{(k)}, \dots$ 
for $1 \le k \le r$. 
We consider the set $\wt\SM = \wt\SM^{(a)}$ by letting $m \mapsto \infty$, 
and give the total order on $\wt\SM$ inherited from $\SM$.  
We define functions $\Phi_{\pm}(\Bu)$ 
with multi-variables $\Bu = \{ u^{(k)}_i \mid (k,i) \in \wt\SM\}$ by 
\begin{align*}
\tag{5.11.6}
\Phi_{-}(\Bu) = &\prod_{\substack{(k,i) \in \vD}}
                  \prod_{j \ge i}\Psi^{(k,i)}_{-}(u^{(k)}_j)   \\
      &\times \frac{\prod_{\substack{i < j, (k,i) \in \vD_1 }}
         (1 - (u_i^{(k)})\iv u_j^{(k)})}
       {\prod_{\substack{(k,i) < (k-1,\ell) \\ (k,i), (k-1,i) \in \vD_1}}
                 (1 - t_{k-1}(u_i^{(k)})\iv u^{(k - 1)}_{\ell})}
            \prod_{\substack{i < j \\ (r,i) \in \vD_0 }}
         \frac{1 - (u_i^{(r)})\iv u_j^{(r)}}{1 - t_0(u_i^{(r)})\iv u^{(r)}_j},
\end{align*}
\begin{equation*}
\tag{5.11.7}
\Phi_{+}(\Bu) = \prod_{\substack{(k,i) \in \wt\SM }}\Psi^{(k)}_{+}(u^{(k)}_i)
     \prod_{\substack{i < j \\  (k,i) < (k +1,\ell)}}
         \frac{1 - (u_i^{(k)})\iv u_j^{(k)}}{1 - t_{k}(u_i^{(k)})\iv u^{(k + 1)}_{\ell}}. 
\end{equation*}
\par
Recall that $\nu_0 = (k_0, i_0)$ (see 4.1). 
We consider the following condition on $\Bla \in \SP_{n,r}$;
\par\medskip\noindent 
(A) : $\la^{(1)}_{i_0} \ne 0$. 
\par\medskip
We shall prove the following result.

\begin{prop}  %%%%   Prop. 5.12.
In the ``$+$''case, assume that $\Bla$ satisfies the condition \rm{(A)}.   
In the ``$-$''case, give no assumption. 
Then $\wt Q^{\pm}_{\Bla}(x;\Bt)$ coincides with the coefficient of 
$\Bu^{\Bla} = \prod_{k,i}(u^{(k)}_i)^{\la^{(k)}_i}$ in the function $\Phi_{\pm}(\Bu)$. 
\end{prop}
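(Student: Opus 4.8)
The plan is to prove both cases by induction, peeling off the first column and matching each peeling step with an iterated coefficient extraction from $\Phi_{\pm}(\Bu)$. The recursion on the Hall--Littlewood side is already available: Lemma 5.2 for the ``$-$''-case (which peels the block of rows from $a$ to $b$ at once) and Lemma 5.5 for the ``$+$''-case (which peels a single row). The whole argument is a multi-row analogue of the computation of $Q_{\la}$ in [M, III] from the recursion (5.10.1), and I would run it in parallel with that classical proof. The base case of both inductions is the one-part computation, which reduces to (4.2.1) together with the $s = 1$ instance of Proposition 5.9.

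For the ``$-$''-case I would fix $\SM = \SM^{(a)}$ and let $b$ and $\Bmu$ be as in 5.1, so that $\SM' = \SM^{(b+1)}$ and $\Bmu$ is compatible with $\SM'$. By the inductive hypothesis, $\wt Q^-_{\Bmu}$ is the coefficient of $\Bu^{\Bmu}$ in the function $\Phi_-$ attached to $\SM'$. The core step is to compute the coefficient of $\Bu^{\Bla}$ in $\Phi_-(\Bu)$ attached to $\SM$ by extracting the first-column variables $u^{(a)}_1, \dots, u^{(b)}_1$ first. I would show that $\Phi_-$ regroups into a factor carrying precisely the dependence on these variables and a factor that, after the relabeling indexed by $\Bi$, becomes the $\Phi_-$ of $\SM'$ in the remaining variables; the correction factors $\prod_{i<j}(1 - (u^{(k)}_i)\iv u^{(k)}_j)$ in the numerator of (5.11.6), together with the denominators $1 - t_{k-1}(u^{(k)}_i)\iv u^{(k-1)}_{\ell}$ and $1 - t_0(u^{(r)}_i)\iv u^{(r)}_j$, are exactly what is needed to pass between the symmetric sum defining $g^{(a)}_{\Bi,-}$ and the generating-function product $\prod_{j \ge i}\Psi^{(k,i)}_-(u^{(k)}_j)$. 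Extracting the coefficient of $\prod_{a \le k \le b}(u^{(k)}_1)^{\la^{(k)}_1}$ from the first factor and invoking (5.11.4) identifies it with $q^{(b)}_{\la^{(b)}_1,-}(x;t_b)$, or with its $\wt q^{(r)}_{s,-}$, resp.\ $q_s(x^{(r)};t_0)$, avatar when $b = r$; Proposition 5.9 then rewrites this as $\sum_{\Bi}(x^{(b)}_{i_b})^{\la^{(b)}_1 - \ve}g^{(a)}_{\Bi,-}$, with the coupling to $\Bi$ matching the variable relabeling in $\wt Q^{[\Bi],-}_{\Bmu}$. Comparison with Lemma 5.2 then yields the claim.

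For the ``$+$''-case the recursion of Lemma 5.5 peels a single row, so the inductive step passes from $\SM = \SM^{(a)}$ to $\SM' = \SM^{(a+1)}$ and removes $\la^{(a)}_1$. The assumption $\la^{(1)}_2 \ne 0$ required by Lemma 5.5 is precisely what condition (A) guarantees persists along the induction: since $\nu_0 = (k_0,i_0)$ with $\la^{(1)}_{i_0}\ne 0$ forces $\la^{(1)}_2, \dots, \la^{(1)}_{i_0}$ all nonzero, the hypothesis of Lemma 5.5 is available at each stage. I would then extract the coefficient of $(u^{(a)}_1)^{\la^{(a)}_1}$ from $\Phi_+(\Bu)$, using (5.11.5) to identify $\Psi^{(a)}_+$ with the generating function of $q^{(a)}_{s,+}$ (resp.\ $\wt q^{(a)}_{s,+}$), and use the correction factors in (5.11.7) to reproduce $g^{(a)}_{i,+}$ and the relabeling in $\wt Q^{[i],+}_{\Bmu}$, matching Lemma 5.5 term by term.

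The hard part will be the bookkeeping in the regrouping of $\Phi_{\pm}$: one must verify that extracting the first-column coefficients factors $\Phi_{\pm}$ \emph{exactly} into the $g^{(a)}_{\Bi,\pm}$-term and the $\Phi_{\pm}$ of the smaller index set, with the index $\Bi$ coupling the two pieces in the same way it couples $g^{(a)}_{\Bi,-}$ and $\wt Q^{[\Bi],-}_{\Bmu}$ in Lemma 5.2. This is where the precise shape of the correction factors must be tracked carefully---which parameter $t$ appears, and whether the restriction is $j \ge i$, $i < j$, or runs over $\vD_0$ versus $\vD_1$. In particular the $\vD_0$ factor carrying $t_0$ in (5.11.6) and the special $a = 1$, $b = r$ case (handled by Lemma 5.7 inside Proposition 5.9) need separate attention, as does the discrepancy in $j_0$ between $\Bla$ and $\Bmu$ noted after (5.2.3), which accounts for the $(1-t_0)^{j_0}$ normalization in $\wt Q^-_{\Bla}$.
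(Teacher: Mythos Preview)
Your inductive architecture is exactly the paper's: Lemma~5.2 (resp.~5.5) for the recursion on $\wt Q^{\pm}$, Proposition~5.9 to handle the block of rows from $a$ to $b$, and matching against a recursion on $\Phi_{\pm}$. But your description of the key step is garbled in a way that would cause the argument to stall if executed as written.

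The problem is in how you propose to use Proposition~5.9. You write that extracting the coefficient of $(u^{(b)}_1)^{\la^{(b)}_1}$ from the first factor gives $q^{(b)}_{\la^{(b)}_1,-}$, and then Proposition~5.9 ``rewrites this as $\sum_{\Bi}(x^{(b)}_{i_b})^{\la^{(b)}_1-\ve}g^{(a)}_{\Bi,-}$, with the coupling to $\Bi$ matching the variable relabeling''. Neither half is right. First, the coefficient of $(u^{(b)}_1)^s$ in $\Phi_-(\Bu)$ is not simply $q^{(b)}_{s,-}$: the correction factors in (5.11.6) carry negative powers of $u^{(b)}_1$ and convolve with the $\Psi^{(b,1)}_-$ series, so one actually gets $\sum_{p\ge 0} f_p\, q^{(b)}_{s+p,-}$ with $f_p$ coming from the correction factor. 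Second, and more seriously, you cannot reintroduce an $\Bi$-sum from a single $q^{(b)}_{s,-}$ and expect it to couple to the remaining factor: that remaining factor $\Phi_-(\Bu')$ (the product of factors of $\Phi_-(\Bu)$ not containing $u^{(b)}_1$) uses the full variable set $x_{\SM}$ and carries no $\Bi$-dependence whatsoever.

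What actually makes the coupling work is the identity the paper records as (5.12.1): the relabeled function $\Phi^{[\Bi]}_-(\Bu')$ equals $\Phi_-(\Bu')$ times a correction factor that depends on $\Bi$ \emph{only through the single variable} $x^{(b)}_{i_b}$. The paper then runs the computation in the direction opposite to yours: start from the Lemma~5.2 expression $\sum_{\Bi}(x^{(b)}_{i_b})^{s-\ve}g^{(a)}_{\Bi,-}\Phi^{[\Bi]}_-(\Bu')$, substitute (5.12.1), expand the correction factor as $\sum_p f_p\,(x^{(b)}_{i_b})^p$, and only then apply Proposition~5.9 in its natural direction, collapsing the $\Bi$-sum for each power $s+p$ separately. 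The result is $\Phi_-(\Bu')\sum_p f_p\, q^{(b)}_{s+p,-}$, whose positive-degree part in $u^{(b)}_1$ is recognized as $\Psi^{(b,1)}_-(u^{(b)}_1)$ times the correction factor with $(u^{(b)}_1)^{-1}$ in place of $x^{(b)}_{i_b}$, i.e.\ exactly the $u^{(b)}_1$-dependent part of $\Phi_-(\Bu)$. Your last paragraph correctly flags the bookkeeping as the hard part, but the specific mechanism you need is (5.12.1) together with this expand-then-collapse trick; the correction factors in (5.11.6) are not there to ``pass between $g^{(a)}_{\Bi,-}$ and $\Psi$'' but rather are the $u$-variable shadow of the $x^{(b)}_{i_b}$-correction in (5.12.1).
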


\begin{proof}
First we consider the ``$-$''case. 
We assume that either $a > 1$ or $a = 1$ and $\Bla$ is not of the form 
$(-,\dots, -,\la^{(r)})$.   
Let $b \le r$ be the smallest integer $b \ge a$ such that $\la^{(b)} \ne \emptyset$. 
We follow the notation in 5.1. First assume that such  $b$ exists.  
Hence $(b,1) \in \vD_1$.
Let $\Bi_0 = (1, \dots, 1) \in [1,m]^d$ be as before, and put 
$\Bu' = \Bu - \{ u^{(k)}_1 \mid a \le k \le b\}$. 
The function $\Phi^{[\Bi_0]}_-(\Bu')$ is defined similarly to $\Phi_-(\Bu)$,
by replacing $\Bu$ and $x_{\SM}$ by $\Bu'$ and $x_{\SM '}$. 
For each 
$\Bi = (i_a, \dots, i_b) \in [1,m]^d$ with $d = b-a+1$, 
we denote by  $\Phi_-^{[\Bi]}(\Bu)$ the function obtained 
from $\Phi^{[\Bi_0]}_-(\Bu')$ by replacing the variables 
$x^{(k)}_2, \dots, x^{(k)}_m$ by $x^{(k)}_1, \dots, \wh x^{(k)}_{i_k}, \dots x^{(k)}_m$
($\wh x^{(k)}_{i_k}$ is removed) for $a \le k \le b$.
Then by (5.11.6), we have

\begin{align*}
\tag{5.12.1}
\Phi^{[\Bi]}_-(\Bu') = \Phi_-(\Bu')\frac{\prod_{j \ge 2}1 - x^{(b)}_{i_b}u_j^{(b)}}
           {\prod_{(*)}1 - t_{b-1}x^{(b)}_{i_{b}}u^{(b-1)}_{\ell}},  
\end{align*}  
where the condition (*) is that $(b-1,1) \in \vD_1$ and $(b-1, \ell) \in \SM '$ 
(this occurs only when $a = b$). 
Moreover, $\Phi_-(\Bu')$ is defined as the product of factors in $\Phi_-(\Bu)$ 
not containing $u_1^{(b)}$. 
Let $\Bmu$ be as in 5.1.  
By induction hypothesis, we may assume that 
$\wt Q^{-,[\Bi_0]}_{\Bmu}$ is obtained as the coefficient of $\Bu^{\Bmu}$ in the 
function $\Phi^{[\Bi_0]}_-(\Bu')$. 
Thus a similar result holds also for $\wt Q^{-,[\Bi]}_{\Bmu}$.  
Combining it with Lemma 5.2, we see that $\wt Q^-_{\Bla}(x;\Bt)$ is the 
coefficient of $\Bu^{\Bla}$ in  
\begin{equation*}
\sum_{s \ge 0}(u_1^{(b)})^s\sum_{\Bi \in [1,m]^d}(x^{(b)}_{i_b})^{s - \ve^{(b)}_{1,-}}
             g^{(a)}_{\Bi,-}\Phi_-^{[\Bi]}(\Bu'),
\end{equation*}
where $\ve^{(b)}_{1,-} = 1$ (resp. $\ve^{(b)}_{1,-} = 0$) if $b < r$ (resp. $b = r$). 
Now by (5.12.1) this expression is equal to 
\begin{align*}
\tag{5.12.2}
\Phi_-(\Bu')\sum_{s \ge 0}(u_1^{(b)})^s\sum_{\Bi \in [1,m]^d}
         (x^{(b)}_{i_b})^{s - \ve^{(b)}_{1,-}}
             g^{(a)}_{\Bi,-}
              \frac{\prod_{j \ge 2}1 - x^{(b)}_{i_b}u_j^{(b)}}
           {\prod_{(*)}1 - t_{b-1}x^{(b)}_{i_{b}}u^{(b-1)}_{\ell}}.
\end{align*}
We expand the products in (5.12.2) as a power series in $x^{(b)}_{i_b}$, 
\begin{equation*}
\frac{\prod_{j \ge 2}1 - x^{(b)}_{i_b}u_j^{(b)}}
           {\prod_{(*)}1 - t_{b-1}x^{(b)}_{i_{b}}u^{(b-1)}_{\ell}}
                  = \sum_{p \ge 0}f_p(\Bu';\Bt)(x^{(b)}_{i_b})^p,
\end{equation*}
where $f_p(\Bu';\Bt)$ is a polynomial in $\Bu',\Bt$. 
Write the expression (5.12.2) as $\Phi_-(\Bu')Z$.  
Now assume that $b < r$.
Substituting the above expansion into 
$Z$, we have
\begin{align*}
Z &= \sum_{s \ge 0}(u^{(b)}_1)^s\sum_{p \ge 0}f_p\sum_{\Bi \in [1,m]^d}
         (x^{(b)}_{i_b})^{s - 1 + p}g^{(a)}_{\Bi,-}   \\
  &= \sum_{s, p \ge 0}(u^{(b)}_1)^s q^{(b)}_{s + p, -}f_p \\
  &= \sum_{j \ge 0}(u^{(b)}_1)^j q^{(b)}_{j, -}\sum_{p = 0}^j f_p(u_1^{(b)})^{-p},
 \end{align*}
where the second identity follows from Proposition 5.9.
By using (5.11.4), 
the positive degree part of $Z$ with respect to $u^{(b)}_1$ coincides with that of 
\begin{equation*}
\Psi_{-}^{(b,1)}(u^{(b)}_1)\sum_{p \ge 0}f_p(u^{(b)}_1)^{-p} 
          = \Psi_{-}^{(b,1)}(u^{(b)}_1)
\frac{\prod_{j \ge 2}1 - (u^{(b)}_1)\iv u_j^{(b)}}
           {\prod_{(*)}1 - t_{b-1}(u^{(b)}_1)\iv u^{(b-1)}_{\ell}}.
\end{equation*}
Thus $\wt Q^{-}_{\Bla}$ is the coefficient of $\Bu^{\Bla}$ in 
\begin{equation*}
\Psi_{-}^{(b,1)}(u^{(b)}_1)
\frac{\prod_{j \ge 2}1 - (u^{(b)}_1)\iv u_j^{(b)}}
           {\prod_{(*)}1 - t_{b-1}(u^{(b)}_1)\iv u^{(b-1)}_{\ell}}
     \cdot \Phi_-(\Bu')
      = \Phi_{-}(\Bu)
\end{equation*}
as asserted. 
\par
If $b = r$, in the last expression of $Z$, $q^{(b)}_{j,-}$ should be replaced by 
$\wt q^{(b)}_{j,-}$ by Proposition 5.9. By using (5.11.4), 
we obtain the required formula in this case. 
If such  $b$ does not exist, i.e., $\la^{(b)} = \emptyset$, still  
a similar formula as (5.12.1) holds, but we must replace the numerator by 1. 
The remaining argument is the same as above.
\par
Next consider the case where $\Bla = (-,\dots, -,\la^{(r)})$ and $a = 1$, 
namely, $(r,1) \in \vD_0$.
In this case, the formula (5.12.1) is replaced by 
\begin{equation*}
\tag{5.12.3}
\Phi^{[\Bi]}_-(\Bu') = \Phi_-(\Bu')\prod_{j \ge 2}
       \frac{1 - x^{(r)}_1u_j^{(r)}}
           {1 - t_0x^{(r)}_{1}u^{(r)}_j} = \Phi_-(\Bu')Z.  
\end{equation*}
Then a similar computation as above shows that the positive degree part of
$Z$ with respect to $u_1^{(r)}$ coincides with that of 
\begin{equation*}
\Psi^{(r,1)}_{-}(u_1^{(r)})\prod_{j \ge 2}\frac{1 - (u_1^{(r)})\iv u_j^{(r)}}
                  {1 - t_0(u_1^{(r)})\iv u_j^{(r)}}.
\end{equation*}
Hence the assertion holds by a similar argument as above. 
\par
Finally we consider the ``$+$''-case. 
The formula corresponding to (5.12.1) is given as 
\begin{align*}
\tag{5.12.4}
\Phi^{[i]}_+(\Bu') = \Phi_+(\Bu')\frac{\prod_{j \ge 2}1 - x^{(a)}_{i}u_j^{(a)}}
           {\prod_{(**)}1 - t_{a+1}x^{(a)}_{i}u^{(a+1)}_{\ell}},
\end{align*}  
where the condition (**) is that  $(a+1, \ell) \in \SM '$, namely, 
$\ell \ge 1$ if $a < r$ and $\ell \ge 2$ if $a = r$. 
Then a similar argument works by using Lemma 5.5 instead of Lemma 5.2.
The proposition is proved.   
\end{proof}

\para{5.13.}
Returning to the original setting, we consider $\SM = \SM^{(1)}$. 
Let $\Bbe = (\b^{(1)}, \dots, \b^{(r)})$ be an $r$-composition such that
$\sum_i|\b^{(i)}| = n$.  
We write $\b^{(k)}_i = \b_{\nu}$ if $\nu = (k,i) \in \SM$, and 
identify $\Bbe = (\b^{(k)}_i)$ with an element $(\b_{\nu}) \in \BZ^{\SM}$.  For 
$\nu \ne \nu' \in \SM$, we define an  operator 
$R_{\nu,\nu'}: \BZ^{\SM} \to \BZ^{\SM}$ as follows; for $\Bbe = (\b_{\xi}) \in \BZ^{\SM}$,  
$R_{\nu,\nu'}(\Bbe) = (\b'_{\xi})$ is given by 
\begin{equation*}
\b'_{\nu} = \b_{\nu} + 1, \qquad \b'_{\nu'} = \b_{\nu'} - 1, 
\end{equation*} 
and $\b'_{\xi} = \b_{\xi}$ for $\xi \ne \nu, \nu'$.
A raising operator is defined as a product of various 
$R_{\nu, \nu'}$ for $\nu < \nu'$.  
\par 
For each $r$-composition $\Bbe = (\b^{(k)}_i)$, one can define $q^{\pm}_{\Bbe}(x;\Bt)$ by 
generalizing the definition (2.5.3).
We also extend the definition of $q^{\pm}_{\Bbe}$ to the case where $\Bbe \in \BZ^{\SM}$, 
by putting $q^{\pm}_{\Bbe} = 0$ if some $\b^{(k)}_i \in \BZ_{< 0}$. Then the action of 
the raising operator $R$ on the functions $q^{\pm}_{\Bbe}$ is defined by 
$R(q^{\pm}_{\Bbe}) = q^{\pm}_{\Bbe'}$ with $\Bbe' = R(\Bbe)$.  
As a corollary to Proposition 5.12 we have the following. 

\begin{cor}  %%%%  Cor. 5.14
Under the same assumption as in Proposition 5.12, 
for each $\Bla \in \SP_{n,r}$, $\wt Q^{\pm}_{\Bla}$ is expressed as 

\begin{equation*}
\tag{5.14.1}
\wt Q^-_{\Bla} = \biggl(
                   \prod_{\substack{ \nu = (k,i) \\\nu' = (k',j) \\ \nu < \nu'}}
               \frac{\prod_{\substack{k = k', \nu \in \vD_1}}
            (1 - R_{\nu,\nu'})}
       {\prod_{\substack{k' = k -1 \\ (k,i) \in \vD_1 \\ (k',i) \in \vD_1}}
            (1  - t_{k'}R_{\nu,\nu'})}
         \prod_{\substack{\nu < \nu' \\ b(\nu) = b(\nu') \\ \nu \in \vD_0}}
            \frac{1 - R_{\nu,\nu'}}{1  - t_0R_{\nu,\nu'}}\biggr)q_{\Bla}^-,                 
\end{equation*}
\begin{equation*}
\tag{5.14.2}
\wt Q^{+}_{\Bla} 
      = \biggl(\prod_{\substack{\nu < \nu'}}
       \frac{\prod_{b(\nu) = b(\nu')}
            1 - R_{\nu,\nu'}}
       {\prod_{b(\nu') = b(\nu) + 1 }
            1  - t_{b(\nu)}R_{\nu,\nu'}}\biggr)q^+_{\Bla}.
\end{equation*} 
In particular, $\wt Q^{\pm}_{\Bla} \in \BZ[x;\Bt]$, and is expressed as

\begin{equation*}
\tag{5.14.3}
\wt Q^{\pm}_{\Bla} (x;\Bt) = 
q^{\pm}_{\Bla}(x;\Bt) + \sum_{\substack{\Bmu \in \SP_{n,r} \\ \Bmu > \Bla}}
                  c_{\Bla,\Bmu}(\Bt) q^{\pm}_{\Bmu}(x;\Bt)
\end{equation*} 
with $c_{\Bla,\Bmu}(\Bt) \in \BZ[\Bt]$. 
\end{cor}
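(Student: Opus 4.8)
The plan is to read off the formulas (5.14.1) and (5.14.2) directly from Proposition 5.12 by translating the operation ``extract the coefficient of $\Bu^{\Bla}$ in $\Phi_{\pm}(\Bu)$'' into the language of the raising operators of 5.13 acting on $q^{\pm}_{\Bla}$. The basic dictionary is the elementary observation that, if $F(\Bu) = \sum_{\Bbe}c_{\Bbe}\Bu^{\Bbe}$ is any formal series indexed by $r$-compositions, then for $\nu < \nu'$ in $\SM$ the coefficient of $\Bu^{\Bla}$ in $(u_{\nu})\iv u_{\nu'}F(\Bu)$ equals $c_{R_{\nu,\nu'}(\Bla)}$. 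Hence, once we know that the coefficient of $\Bu^{\Bbe}$ in the product of $\Psi$-functions inside $\Phi_{\pm}$ equals $q^{\pm}_{\Bbe}$, every rational correction factor of the form $(1 - (u_{\nu})\iv u_{\nu'})$ or $(1 - t(u_{\nu})\iv u_{\nu'})\iv$ occurring in $\Phi_{\pm}$ turns into the corresponding operator factor $(1 - R_{\nu,\nu'})$ or $(1 - tR_{\nu,\nu'})\iv$ applied to $q^{\pm}_{\Bla}$.

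First I would isolate the ``$q$-generating'' part of $\Phi_{\pm}$, namely the product of the functions $\Psi^{(k,i)}_{\pm}$, and verify that extracting the coefficient of $\Bu^{\Bbe}$ there produces exactly $q^{\pm}_{\Bbe}$ for every composition $\Bbe$. This rests on (5.11.4) and (5.11.5), which identify the full generating function $\Psi^{(k,1)}_{\pm}$ with $\sum_s q^{(k)}_{s,\pm}u^s$ (or, for $\vD_0$, with $\sum_s q_s(x^{(r)};t_0)u^s$), so that the coefficient of $\prod_{k,i}(u^{(k)}_i)^{\b^{(k)}_i}$ in the product of these generating functions is $q^{\pm}_{\Bbe}$. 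I would then match the remaining rational factors in (5.11.6) and (5.11.7) against the operator factors in (5.14.1) and (5.14.2) one class at a time: the numerator factors $(1 - (u^{(k)}_i)\iv u^{(k)}_j)$ with $i<j$ give operators $(1 - R_{\nu,\nu'})$ over pairs with $b(\nu)=b(\nu')$; the denominator factors $(1 - t_{k'}(u^{(k)}_i)\iv u^{(k')}_{\ell})\iv$ give $(1 - t_{k'}R_{\nu,\nu'})\iv$ over pairs with $b(\nu)$ and $b(\nu')$ adjacent; and the $\vD_0$-factors translate verbatim. Expanding each $(1 - tR)\iv$ as a geometric series and applying the dictionary termwise will yield (5.14.1) and (5.14.2).

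For the final assertion (5.14.3) I would argue order-theoretically. After the above expansion, every term of (5.14.1)/(5.14.2) is a monomial in the $t_k$ times a product of raising operators $R_{\nu,\nu'}$ (with $\nu<\nu'$) applied to $q^{\pm}_{\Bla}$. Since the total order on $\SM$ is the one underlying the interleaved vector $c(\Bla)$, each such $R_{\nu,\nu'}$ adds $1$ to an earlier coordinate and subtracts $1$ from a later one, so $R_{\nu,\nu'}(\Bbe) > \Bbe$ in the dominance order; thus any nontrivial product of them strictly raises the dominance order, while the identity term (all numerators contributing $1$ and all geometric series contributing their degree-$0$ part) returns $q^{\pm}_{\Bla}$ with coefficient $1$. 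As $q^{\pm}_{\Bbe}$ depends only on the multiset of parts in each row, I may sort each row into a partition; by the computation in the proof of Proposition 4.3 this sorting only increases the dominance order, so every nontrivial term contributes some $q^{\pm}_{\Bmu}$ with $\Bmu \in \SP_{n,r}$ and $\Bmu > \Bla$. Because $q^{\pm}_{\Bbe}=0$ whenever an entry is negative, only finitely many terms survive, the sum is finite with coefficients in $\BZ[\Bt]$, and each $q^{\pm}_{\Bmu}(x;\Bt)\in\BZ[x_{\SM};\Bt]$ by (2.5.2); this gives (5.14.3).

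The main obstacle I anticipate is the bookkeeping in the second step: reconciling the partial generating functions $\Psi^{(k,i)}_{\pm}$ (for $i>1$) appearing in (5.11.6)--(5.11.7) with the full generating functions of (5.11.4)--(5.11.5), and keeping the cyclic index $k \in \BZ/r\BZ$ together with the split $\vD = \vD_0 \cup \vD_1$ straight, so that each $u$-factor is assigned to the correct operator factor with the right parameter $t_{k'}$ and the right inequality $\nu<\nu'$. Once the dictionary is set up carefully and every factor is accounted for, the identities (5.14.1) and (5.14.2) become a direct transcription of Proposition 5.12.
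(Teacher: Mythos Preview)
Your proposal is correct and follows essentially the same approach as the paper's proof: identify the product of $\Psi$-functions as the generating series $\sum_{\Bbe}q^{\pm}_{\Bbe}\Bu^{\Bbe}$ via (5.11.4)--(5.11.5), translate each rational factor $(1-(u_{\nu})\iv u_{\nu'})^{\pm 1}$ into the corresponding raising-operator factor $(1-R_{\nu,\nu'})^{\pm 1}$, and then deduce (5.14.3) from $R\Bla \ge \Bla$ together with the fact that sorting a composition into a partition only raises the dominance order. The paper's argument is the same; your anticipated ``bookkeeping obstacle'' is exactly the content of that passage, and your reference for the sorting step should be to the inequality $w(\Bla)\le\Bla$ quoted after (4.3.5) rather than to the main computation of Proposition~4.3.
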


\begin{proof}
First consider the ``$-$'' case. 
By applying (5.11.4) for the case $\SM = \SM^{(1)}$, we have 
$\prod_{(k,i) \in \vD_1}\prod_{j \ge i} 
        \Psi^{(k,i)}_{-}(u^{(k)}_j) = \sum_{\Bbe}q^-_{\Bbe}\Bu^{\Bbe}$, where 
$\Bbe = (\b^{(1)}, \dots, \b^{(r)})$ runs over all $r$-compositions
such that $\b^{(k)}_i = 0$ if $(k,i) \notin \vD_1$. 
Moreover, 
$\prod_{(r,i) \in \vD_0}\prod_{j\ge i}\Psi^{(r,i)}_{-}(u^{(r)}_j) 
                = \sum_{\b} q_{\b}(u^{(r)})^{\b}$
where $\b$ runs over all the compositions in $\BZ_{\ge 0}^{j_0}$ ($j_0$ is as in 
(4.2.4)) and $q_{\b} = q_{\b}(x^{(r)};t_0)$ is defined similarly to $q^-_{\Bbe}$
under the notation of (5.11.4). 
Then the coefficient of $\Bu^{\Bla}$ in $\Phi_{-}(\Bu)$ is equal to that of $\Bu^{\Bla}$ in   
\begin{equation*}
\frac{\prod_{\substack{i < j, (k,i) \in \vD_1 }}
         (1 - (u_i^{(k)})\iv u_j^{(k)})}
       {\prod_{\substack{(k,i) < (k-1,\ell) \\ (k,i), (k-1,i) \in \vD_1}}
                 (1 - t_{k-1}(u_i^{(k)})\iv u^{(k - 1)}_{\ell})}
            \prod_{\substack{i < j \\ (r,i) \in \vD_0 }}
         \frac{1 - (u_i^{(r)})\iv u_j^{(r)}}{1 - t_0(u_i^{(r)})\iv u^{(r)}_j}
      \sum_{\Bbe}q^-_{\Bbe}\Bu^{\Bbe},
\end{equation*}
where $q^-_{\Bbe}(x;\Bt)$ should be replaced by 
$q_{\b^{(r)}}(x^{(r)};t_0)$ if $\Bbe = (-,\dots, -, \b^{(r)})$. 
Thus the coefficient of $\Bu^{\Bla}$ is equal to 
\begin{equation*}
\biggl(\prod_{\substack{\nu = (k,i) \\ \nu' = (k',j) \\ \nu < \nu'}}
               \frac{\prod_{\substack{k  = k', \nu \in \vD_1}}
            (1 - R_{\nu,\nu'})}
       {\prod_{\substack{k' = k -1 \\ (k,i) \in \vD_1 \\ (k-1,i) \in \vD_1 }}
            (1  - t_{k'}R_{\nu,\nu'})}
         \prod_{\substack{\nu < \nu' \\ b(\nu) = b(\nu') \\ \nu \in \vD_0}}
            \frac{1 - R_{\nu,\nu'}}{1  - t_0R_{\nu,\nu'}}\biggr)q_{\Bla}^-.                 
\end{equation*}
Hence (5.14.1) holds.  The ``$+$'' case is dealt with similarly, and we obtain (5.14.2).  
It follows that $\wt Q^{\pm}_{\Bla}$ is a sum of $R q^{\pm}_{\Bla} = q^{\pm}_{R\Bla}$ 
for various raising operators $R$.  $\Bmu' = R\Bla$ is an $r$-composition if $R q_{\Bla} \ne 0$, 
and in that case we have $\Bmu' \ge \Bla$.  Let $\Bmu$ be the $r$-partition obtained from $\Bmu'$ 
by permuting the parts of $\Bmu'$.  Then $\Bmu \ge \Bmu'$, and so $\Bmu \ge \Bla$.  
We have $R q^{\pm}_{\Bla} =  q^{\pm}_{\Bmu}$.  Thus $\wt Q^{\pm}_{\Bla}$ 
can be written as a linear combination 
of $q^{\pm}_{\Bmu}$ with $\Bmu \ge \Bla$. The equality $\Bmu = \Bla$ holds only when $R = \id$.     
Hence (5.14.3) holds. 
\end{proof}

By using the characterization of Hall-Littlewood functions in Theorem 2.14, 
we have the following result.

\begin{thm}  %%%%  Theorem  5.15
Assume that $\Bla$ satisfies the condition \rm {(A)} in 5.11 in the ``$+$''case. 
Give no assumption in the ``$-$''case.   Then we have
\begin{equation*}
\wt P^{\pm}_{\Bla}(x;\Bt) = P^{\pm}_{\Bla}(x;\Bt)  \qquad 
\wt Q^{\pm}_{\Bla}(x;\Bt) = Q^{\pm}_{\Bla}(x,\Bt). 
\end{equation*}
\end{thm}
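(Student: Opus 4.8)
The plan is to prove the identities $\wt P^{\pm}_{\Bla}=P^{\pm}_{\Bla}$ and $\wt Q^{\pm}_{\Bla}=Q^{\pm}_{\Bla}$ by invoking the uniqueness characterization of Theorem 2.14, so the whole task reduces to verifying that $\wt Q^{\pm}_{\Bla}$ (equivalently $\wt P^{\pm}_{\Bla}$) satisfies the two triangularity conditions (i) and (ii) that uniquely pin down $Q^{\pm}_{\Bla}$ (resp. $P^{\pm}_{\Bla}$). The raw material for this is already in hand: Corollary 5.15 supplies the expansion
\begin{equation*}
\wt Q^{\pm}_{\Bla}(x;\Bt)=q^{\pm}_{\Bla}(x;\Bt)+\sum_{\Bmu>\Bla}c_{\Bla,\Bmu}(\Bt)\,q^{\pm}_{\Bmu}(x;\Bt),
\qquad c_{\Bla,\Bmu}(\Bt)\in\BZ[\Bt],
\end{equation*}
which is exactly condition (i) for $Q^{\ve}_{\Bla}$ with leading coefficient $c_{\Bla,\Bla}(\Bt)=1$, except that the order used there is the \emph{partial} dominance order $\le$ rather than the total order $\lve$. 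Since $\lve$ refines $\le$, the sum over $\Bmu>\Bla$ is automatically a sum over $\Bmu\gv\Bla$, so condition (i) holds as stated in Theorem 2.14.

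First I would establish condition (ii), the expansion in Schur functions. This comes from (4.4.2), which gives
\begin{equation*}
\wt P^{\pm}_{\Bla}(x;\Bt)=s_{\Bla}(x)+\sum_{\Bmu<\Bla}w^{\pm}_{\Bla,\Bmu}(\Bt)\,s_{\Bmu}(x),
\end{equation*}
and hence $\wt Q^{\pm}_{\Bla}=(1-t_0)^{j_0}\wt P^{\pm}_{\Bla}$ has the form $\sum_{\Bmu\le\Bla}u_{\Bla,\Bmu}(\Bt)s_{\Bmu}(x)$ with $u_{\Bla,\Bla}(\Bt)=(1-t_0)^{j_0}\ne 0$. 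Again because $\lve$ is compatible with $\le$, the condition $\Bmu\le\Bla$ forces $\Bmu\lve\Bla$, so this is precisely condition (ii) for $Q^{\ve}_{\Bla}$ (with the relaxed requirement $u_{\Bla,\Bla}(\Bt)\ne 0$). Thus $\wt Q^{\pm}_{\Bla}$ satisfies both characterizing conditions, and by the uniqueness part of Theorem 2.14 we conclude $\wt Q^{\pm}_{\Bla}=Q^{\pm}_{\Bla}$. Dividing through by the leading coefficients then gives $\wt P^{\pm}_{\Bla}=P^{\pm}_{\Bla}$, since both are the normalizations of their respective $Q$'s to unit leading Schur coefficient.

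The one genuine subtlety, and the step I would treat most carefully, is the hypothesis management in the ``$+$''-case. Corollary 5.15 (and the underlying Proposition 5.12) is proved only under the condition (A) of 5.11, namely $\la^{(1)}_{i_0}\ne 0$, whereas the ``$-$''-case carries no such restriction. So for the $+$-case the argument above directly yields the theorem only for $\Bla$ satisfying (A), which is exactly the hypothesis as stated in the theorem; I would simply carry (A) along as an assumption throughout the $+$-case and note that the $-$-case is unconditional. No extra work is needed to remove (A), since the theorem does not claim to. The only thing to double-check is that the two normalizing constants match: condition (i) in Theorem 2.14 identifies $c_{\Bla,\Bla}(\Bt)$ for the genuine $Q^{\ve}_{\Bla}$ as $1$ (from the diagonal of the matrix $A_{\pm}$ there being $b_{\Bla}(\Bt)$ for $P$ and $1$ for $Q$), which agrees with the leading coefficient $1$ of $q^{\pm}_{\Bla}$ in the Corollary 5.15 expansion; this consistency is what makes the uniqueness bite and forces the identification rather than merely a proportionality.
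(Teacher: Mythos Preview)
Your proposal is correct and follows essentially the same approach as the paper: the paper's proof simply says that the identity $\wt Q^{\pm}_{\Bla}=Q^{\pm}_{\Bla}$ follows from Proposition~4.3 (the Schur expansion) and Corollary~5.14 (the $q^{\pm}$-expansion), implicitly via the uniqueness in Theorem~2.14, and then $\wt P^{\pm}_{\Bla}=P^{\pm}_{\Bla}$ follows from (4.4.2). Note only that what you call ``Corollary~5.15'' is the paper's Corollary~5.14 (formula~(5.14.3)); the content you describe is correct, just the number is off.
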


\begin{proof}
The second formula follows from Proposition 4.3 and Corollary 5.14.
Then the first formula follows from (4.4.2). 
\end{proof}

\remark{5.16.} \  
In the ``$+$''case, the inductive argument as in the proof of Proposition 5.12
does not work if (A) is not satisfied.  Although the definition of 
$\wt Q_{\Bla}^+$ makes sense even in that case, 
this function does not coincide with $Q_{\Bla}^+$ in general.  
In next section, we discuss the excluded case, and give the closed formula 
for $Q^{+}_{\Bla}$ without assuming (A). 

%%%%
%%%%
%%%%
\par\bigskip\medskip
\section{Closed formula for Hall-Littlewood functions -- ``$+$''case}

\para{6.1.}
Take $\Bla \in \SP_{n,r}$ and consider $\SM = \SM^{(1)}$ associated to $\Bla$. 
Recall that $\nu_0 = (k_0, i_0)$. 
We define a sequence of integers $m_1 \le m_2 \le \cdots \le m_r = i_0$ 
inductively by $m_1 = \ell(\la^{(1)}), m_k = \max (m_{k-1}, \ell(\la^{(k)}))$ 
for $k \ge 2$.
We define a function $I^{(k)}(x;\Bt)$ as follows; 
\begin{align*}
I^{(k)}(x;\Bt) = \prod_{(k,i) \le \nu_0}\prod_{i < j}(x_i^{(k)} - t_{k-1}x^{(k-1)}_j) 
\end{align*}
for $k = 2, \dots, r$, and $I^{(1)} = J_1J_2\cdots J_r$, where 
\begin{align*}
J_a(x;\Bt) = \prod_{i = m_{a -1}+1}^{m_a}\prod_{i \le j}
     (x^{(a)}_i - t_{a-1}\cdots t_2t_1t_rx_j^{(r)})
\end{align*}
if $m_{a-1} < m_a$, and $J_a(x;\Bt) = 1$ if $m_{a-1} = m_a$. 
(By convention, put $m_0 = 0$.)
Moreover for $k = 1, \dots, r$, put 
\begin{equation*}
I_0^{(k)}(x;\Bt) = \prod_{(k,i) > \nu_0}\prod_{i < j} (x^{(k)}_i - t_kx^{(k)}_j).
\end{equation*}
We define a function $R^{\sharp}_{\Bla}(x;\Bt)$ by

\begin{equation*}
R^{\sharp}_{\Bla}(x;\Bt) = \sum_{w \in S^r_m}w\left(
                           \frac{\displaystyle\prod_{1 \le k \le r}(x^{(k)})^{\la^{(k)} -\ve^{(k)}}
                             I^{(k)}(x;\Bt)I^{(k)}_0(x;\Bt)}
              {\displaystyle\prod_{1 \le k \le r}\prod_{i < j}x^{(k)}_i - x^{(k)}_j}\right).
\end{equation*}
Here $\ve^{(k)} = (\ve^{(k)}_1, \dots, \ve^{(k)}_m )$, 
where $\ve^{(k)}_i = 1$ for $m_{k-1} < i \le m_k$ 
and $\ve^{(k)}_i = 0$ otherwise.  
\par
Let $v'_{\Bla}(\Bt)$ be as in (4.1.4).  By a similar discussion as in the 
proof of (4.2.2), we obtain a formula
\begin{equation*}
\tag{6.1.1}
   R^{\sharp}_{\Bla}(x;\Bt)   
     = v'_{\Bla}(\Bt)\sum_{w \in S^r_m /S'_{\Bla}}
      w\left(\frac{\displaystyle\prod_{1 \le k \le r}(x^{(k)})^{\la^{(k)} - \ve^{(k)}}
                I^{(k)}(x;\Bt)}
           {\displaystyle\prod_{(k,i) \le \nu_0}\prod_{\substack{i < j }}
                          x^{(k)}_i - x^{(k)}_j}  \right).      
\end{equation*}
Thus $R^{\sharp}_{\Bla}(x;\Bt)$ is a polynomial in $\BZ[x_{\SM};\Bt]$, and 
is divisible by $v'_{\Bla}(\Bt)$. Moreover, $v'_{\Bla}(\Bt)\iv R^{\sharp}_{\Bla}$
satisfies the stability property for $m \mapsto \infty$.  
We note that $R^{\sharp}_{\Bla}$ satisfies a similar property 
as in Proposition 4.3, namely, 

\begin{prop} %%%%  Prop 6.2
There exist polynomials $u_{\Bla,\Bmu}(\Bt) \in \BZ[\Bt]$ such that
\begin{equation*}
R^{\sharp}_{\Bla}(x;\Bt) = \sum_{\Bmu \le \Bla}u_{\Bla,\Bmu}(\Bt)s_{\Bmu}(x).
\end{equation*}
\end{prop}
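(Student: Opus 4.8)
The plan is to mirror the proof of Proposition 4.3 as closely as possible, exploiting the fact that $R^{\sharp}_{\Bla}(x;\Bt)$ differs from $R^+_{\Bla}(x;\Bt)$ only in the shape of the ``off-diagonal'' factors $I^{(k)}(x;\Bt)$. First I would expand the full product $\prod_{1\le k\le r}I^{(k)}(x;\Bt)I^{(k)}_0(x;\Bt)$ as a sum of monomials, exactly as in (4.3.1). Each factor $(x^{(k)}_i-t_{k-1}x^{(k-1)}_j)$, $(x^{(a)}_i-t_{a-1}\cdots t_1t_rx^{(r)}_j)$, or $(x^{(k)}_i-t_kx^{(k)}_j)$ contributes either its ``$x$-part'' or its ``$t\cdot x$-part'', and this choice is encoded by a $0$--$1$ matrix $(r_{\nu,\nu'})$ indexed by $\SM$ satisfying a support condition analogous to (4.3.2). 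The novelty is that the $J_a$ factors in $I^{(1)}$ pair a row-$a$ variable with a row-$r$ variable carrying the compound parameter $t_{a-1}\cdots t_1 t_r$; the associated entries live in the block $b(\nu)=a$, $b(\nu')=r$ rather than the block $b(\nu')=b(\nu)\mp1$. This only changes which parameter $t_{\nu'}$ is attached to a given monomial, not the combinatorial bookkeeping of exponents.

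Next, for each such matrix I would set $\b^{(k)}_i=\la^{(k)}_i-\ve^{(k)}_i+\sum_{\nu'}r_{\nu,\nu'}$ (with the modified $\ve^{(k)}$ defined via the $m_{k-1}<i\le m_k$ condition), discard the compositions with a repeated entry in some row (these kill $a_{\Bbe}/a_{\Bdel}$), and rearrange the survivors into a partition $\Bmu$ with $\b^{(k)}_{w_k(i)}=\mu^{(k)}_i+(m-i)$. As in Proposition 4.3, the term then equals $\ve(w)s_{\Bmu}(x)$, and the integrality of the coefficients $u_{\Bla,\Bmu}(\Bt)\in\BZ[\Bt]$ is automatic. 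The whole content of the proposition is therefore the dominance bound
\begin{equation*}
\Bmu \le \Bla,
\end{equation*}
which reduces, via $w(\Bla)\le\Bla$, to an inequality of the shape (4.3.5)--(4.3.6) on partial sums of $\sum s_{\nu,\nu'}$ over the truncated index set $\SB$.

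The main obstacle, and the only place where the argument genuinely departs from Proposition 4.3, is re-establishing the row-by-row bound (4.3.7) for the new pairing pattern. In Proposition 4.3 the off-diagonal entries coupled row $k$ to row $k\mp1$, so the sets $X'_k$ were governed by the cyclic shifts $w'_{\pm}$; here, the $J_a$ factors couple rows $1,\dots,r-1$ directly to row $r$, so I would have to redefine the auxiliary sets $X_k$ (and the shifted index sets $\SB'$) to reflect that each row $a<r$ sends its off-diagonal entries into row $r$ rather than row $a-1$. The key point is that the support condition still forces $r_{\nu,\nu'}+r_{\nu',\nu}=1$ exactly on the admissible pairs and $0$ elsewhere, so for each fixed destination row the number of contributing entries $s_{\nu,\nu'}$ with $\nu\in\SB$ is still bounded by the same counting estimates $a(a+1)/2$ (minus the $\ve$-correction) and $a(m-a)$ as in (4.3.7). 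Once that row-wise bound is set up correctly, summing over $k$ and comparing with $\sum_{i\le a}(m-i+\ve^{(k)}_{w_k(i)})$ yields (4.3.6), hence $\Bmu\le\Bla$, completing the proof. I would not reproduce the integrality or the boundary conditions $u_{\Bla,\Bla}(\bold 0)=1$ in detail, since those follow verbatim from the corresponding steps for $R^{\pm}_{\Bla}$.
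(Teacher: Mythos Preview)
Your plan is essentially the paper's own proof: expand the product into monomials indexed by a $0$--$1$ matrix, form the composition $\Bbe$, straighten to a partition $\Bmu$, and verify the dominance bound via a row-by-row estimate analogous to (4.3.7). One point deserves correction, though. You write that ``each row $a<r$ sends its off-diagonal entries into row $r$ rather than row $a-1$,'' suggesting that all of the sets $X'_k$ must be redefined. That is not what happens. For $k\ge 2$ the factor $I^{(k)}(x;\Bt)=\prod_{(k,i)\le\nu_0}\prod_{i<j}(x_i^{(k)}-t_{k-1}x_j^{(k-1)})$ is the same row-$k$-to-row-$(k-1)$ coupling as in the original ``$+$'' case, so the sets $X'_k$ for $k\ne 1$ are \emph{unchanged}. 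The only modification is to $X'_1$: in $R^+_{\Bla}$ this set encoded the row-$1$-to-row-$r$ coupling from $I^+_{(1,i)}$, whereas in $R^{\sharp}_{\Bla}$ it must encode the $J_a$ couplings, which pair row-$a$ entries (for $m_{a-1}+1\le i\le m_a$) with row-$r$ entries. The paper replaces $X'_1$ by the disjoint union $\coprod_{a=1}^r Y_a$ with
\[
Y_a=\{(\nu,\nu')\in\SM^2\mid w(\nu)=(a,i),\ m_{a-1}+1\le i\le m_a,\ b(\nu')=r\},
\]
defines matching subsets $\SB'_{r,a}\subset\SM_r$ by $w_a(\SB_a)=w_r(\SB'_{r,a})$, and splits $A^+_1,B^+_1$ into pieces $A^+_{1,a},B^+_{1,a}$ accordingly. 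With that one adjustment, the counting estimates (4.3.7)--(4.3.8) go through verbatim for each $k$, using the modified $\ve^{(k)}$ (which is now supported on $m_{k-1}<i\le m_k$ rather than on $k=1$), and the dominance bound follows. So your outline is right, but be careful: the $J_a$'s \emph{add} row-$a$-to-row-$r$ couplings on top of the existing row-$k$-to-row-$(k-1)$ ones from $I^{(k)}$; they do not replace them.
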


\begin{proof}
We use the same notation as in the proof of Proposition 4.3 
(here we consider the ``$+$'' case), in particular, let $w = (w_1, \dots, w_r) \in S_m^r$ 
be as defined there. 
In the ``$+$''-case in 4.3, we modify the definition of $X'_1$ as follows 
($X'_k$ for $k \ne 1$ are unchanged).
$X'_1 = \coprod_{a = 1}^rY_a$, where 
\begin{equation*}
Y_a = \{ (\nu, \nu') \in \SM^2 \mid w(\nu) = (a, i), m_{a -1}+1 \le i \le m_a,
                b(\nu') = r   \}  
\end{equation*}
Let $\SM_k$ (resp. $\SB_k$) be the $k$-th row of $\SM$ (resp. $\SB$). 
We define a subset $\SB'_{r, a}$ of 
$\SM_r$ by 
$w_a(\SB_a) = w_r(\SB'_{r, a})$. 
Put
\begin{align*}
A^+_{1, a} &= \sum_{\substack{(\nu,\nu') \in Y_{a} \\  
        \nu \in \SB_{a}, \nu' \in \SB'_{r, a} }}s_{\nu,\nu'}, \\
B^+_{1,a} &= \sum_{\substack{(\nu,\nu') \in Y_{a} \\
        \nu \in \SB_{a}, \nu' \in \SM_r - \SB'_{r,a}}}s_{\nu,\nu'}. 
\end{align*}
We put $A^+_1 = \coprod_{a=1}^rA^+_{1,a}, B^+_1 = \coprod_{a=1}^rB^+_{1,a}$.
Then a similar argument as in the proof of Proposition 4.3 works by replacing 
$A^+_1, B_1^+$ there by the current version. Thus the proposition follows. 
\end{proof}

\para{6.3.}
Let $\Bla \in \SP_{n,r}$.  
We define a polynomial $f_{\Bla}(\Bt)$ by
\begin{equation*}
\tag{6.3.1}
f_{\Bla}(\Bt) = \prod_{i = 1}^{r-1}t_i^{A_i},
\end{equation*}  
where $A_i = (m -i_0) + (m - i_0+1) + \cdots + (m - m_i-1)$.  
We define a function $Q^{\sharp}_{\Bla}(x;\Bt)$ by 
\begin{equation*}
Q^{\sharp}_{\Bla}(x;\Bt) = v'_{\Bla}(\Bt)\iv f_{\Bla}(\Bt)\iv R^{\sharp}_{\Bla}(x;\Bt). 
\end{equation*}
We show the following result.

\begin{thm}  %%%%   Theorem 6.4
Let $\Bla \in \SP_{n,r}$ be an arbitrary $r$-partition of $n$. 
\begin{enumerate}
\item
$Q^{\sharp}_{\Bla}(x;\Bt)$ coincides with $Q^+_{\Bla}(x;\Bt)$.
\item
$Q^+_{\Bla}(x;\Bt)$ can be expressed as 
\begin{equation*}
Q^+_{\Bla}(x;\Bt) = \prod_{\nu < \nu'}\frac{\prod_{b(\nu) = b(\nu')}1 - R_{\nu,\nu'}}
             {\prod_{b(\nu') = b(\nu) + 1}1 - t_{b(\nu)}R_{\nu,\nu'}}{q}^+_{\Bla}.
\end{equation*}
\item 
$P^+_{\Bla}(x;\Bt) = (1 - t_0)^{-j_0}Q^+_{\Bla}(x;\Bt)$. 
\end{enumerate}
\end{thm}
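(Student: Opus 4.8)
The plan is to deduce (i) by showing that $Q^{\sharp}_{\Bla}$ satisfies the two characterizing properties of $Q^+_{\Bla}$ recorded in Theorem 2.14, then to read off (ii) from the computation made along the way, and finally to obtain (iii) by a short detour through the ``$-$''-case. Recall that Theorem 2.14 characterizes $Q^+_{\Bla}$ as the unique function admitting both an expansion $\sum_{\Bmu \gve \Bla}c_{\Bla,\Bmu}(\Bt)q^+_{\Bmu}$ with $c_{\Bla,\Bla}(\Bt)=1$ and an expansion $\sum_{\Bmu \lve \Bla}u_{\Bla,\Bmu}(\Bt)s_{\Bmu}$.

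The Schur-expansion condition is immediate: by Proposition 6.2 together with the factorization (6.1.1), the function $Q^{\sharp}_{\Bla}=v'_{\Bla}(\Bt)\iv f_{\Bla}(\Bt)\iv R^{\sharp}_{\Bla}$ is a combination of $s_{\Bmu}$ with $\Bmu \le \Bla$, hence with $\Bmu \lve \Bla$ since the chosen total order refines the dominance order. The substantive step is the raising-operator identity of part (ii), which exhibits $Q^{\sharp}_{\Bla}$ as $q^+_{\Bla}+\sum_{\Bmu \gv \Bla}c_{\Bla,\Bmu}(\Bt)q^+_{\Bmu}$, i.e. the $q^+$-expansion of Theorem 2.14 with leading coefficient $1$. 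I would establish this by rerunning the inductive machinery of Section 5 — the peeling Lemmas 5.2 and 5.5, the reduction Proposition 5.9, the generating-function argument of Proposition 5.12, and finally Corollary 5.14 — but now for $R^{\sharp}_{\Bla}$ in place of $R^+_{\Bla}$. The role of the modified factor $I^{(1)}=J_1\cdots J_r$ is precisely that, when condition (A) fails and the top rows $1,\dots,r-1$ exhaust their nonzero parts before row $r$, the excess variables $x^{(a)}_i$ (for $m_{a-1}<i\le m_a$) are paired with the row-$r$ variables through the cyclically accumulated parameter $t_{a-1}\cdots t_1t_r$; the telescoping Lemma 5.7 is exactly the device that collapses these products around the cycle. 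The correction factor $f_{\Bla}(\Bt)$ is then calibrated to absorb the spurious powers of the $t_i$ produced by this re-pairing, so that the coefficient of $q^+_{\Bla}$ is normalized to $1$.

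Once both expansions are in hand, the sandwiching argument from the uniqueness part of Theorem 2.14 applies verbatim: the $q^+$-expansion places $Q^{\sharp}_{\Bla}$ in the span of $\{Q^+_{\Bmu}\mid \Bmu \gve \Bla\}$, the Schur expansion places it in the span of $\{Q^+_{\Bmu}\mid \Bmu \lve \Bla\}$, and the intersection is spanned by $Q^+_{\Bla}$ alone, with coefficient $1$ by the normalization; hence $Q^{\sharp}_{\Bla}=Q^+_{\Bla}$, which is (i). Statement (ii) is then nothing but the raising-operator formula already proved, with $Q^{\sharp}_{\Bla}$ identified with $Q^+_{\Bla}$.

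For (iii) I would avoid any further work with $R^{\sharp}$ and instead exploit the ``$-$''-case, which carries no hypothesis. By Theorem 5.15 we have $\wt Q^-_{\Bla}=Q^-_{\Bla}$ and $\wt P^-_{\Bla}=P^-_{\Bla}$, while by definition (4.2.5) $\wt Q^-_{\Bla}=(1-t_0)^{j_0}\wt P^-_{\Bla}$; hence $Q^-_{\Bla}=(1-t_0)^{j_0}P^-_{\Bla}$. Since the scalar $b_{\Bla}(\Bt)$ in $Q^{\pm}_{\Bla}=b_{\Bla}(\Bt)P^{\pm}_{\Bla}$ (see (2.13.1)) is the same for both signs, this forces $b_{\Bla}(\Bt)=(1-t_0)^{j_0}$, and therefore $P^+_{\Bla}=(1-t_0)^{-j_0}Q^+_{\Bla}$, as claimed. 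The \emph{main obstacle} is the raising-operator identity for $Q^{\sharp}_{\Bla}$ in the absence of (A): the bookkeeping needed to verify that the $J_a$-factors, processed through Lemma 5.7 around the cyclic quiver, combine to give exactly $q^+_{\Bla}$ with coefficient $1$ after dividing by $f_{\Bla}(\Bt)$, is the technical heart of the argument, whereas everything else is a formal consequence of the characterization.
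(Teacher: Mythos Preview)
Your overall architecture is correct and matches the paper: establish the Schur-triangularity of $Q^{\sharp}_{\Bla}$ via Proposition 6.2, establish the $q^+$-triangularity with leading coefficient $1$ via a raising-operator formula, and then invoke the uniqueness in Theorem 2.14. Your argument for (iii), reading off $b_{\Bla}(\Bt)=(1-t_0)^{j_0}$ from the ``$-$''-case and (2.13.1), is exactly the paper's.

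The divergence is in how the raising-operator identity is obtained. You propose to rerun the Section~5 induction directly on $R^{\sharp}_{\Bla}$, with Lemma 5.7 absorbing the cyclic $J_a$-factors. The paper instead runs a \emph{double induction}, on $m$ and on $r$. When $\la^{(1)}\ne\emptyset$ it does peel one full layer as you suggest, using an analogue of Lemma 5.5 to reduce $m$ by $1$ and then the generating-function argument of Proposition 5.12. But when $\la^{(1)}=\emptyset$ the paper does \emph{not} try to continue peeling; instead it proves separately (Lemma 6.6) that $Q^{\sharp}_{\Bla}(x;\Bt)={Q'}^{\sharp}_{\Bla'}(x';\Bt')$ for $\Bla'\in\SP_{n,r-1}$ and $\Bt'=(t_2,\dots,t_{r-1},t_1t_r)$, the row-$1$ variables contributing only the scalar $t_1^{A_1}$ absorbed by $f_{\Bla}$. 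Combined with $Q^+_{\Bla}={Q'}^+_{\Bla'}$ from Proposition 3.4 and induction on $r$, this gives (i); a further computation (Proposition 6.9, via Lemma 3.2) lifts the $(r{-}1)$-variable raising-operator formula to the $r$-variable one and gives (ii). The base case is $\Bla\in\SP^{r-1}_{n,r}$, handled by Lemma 6.8 and the classical formula.

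This is not merely a stylistic difference. Your direct plan has a gap precisely at the point where the paper bifurcates: once the induction reaches $\la^{(1)}=\emptyset$, the Lemma 5.5-type peeling breaks down (the relevant product is no longer $S_{\Bm'}$-stable; cf.\ Remark 5.3 and the hypothesis $\la^{(1)}_2\ne 0$ in Lemma 5.5), and you have not said what replaces it. Invoking Lemma 5.7 ``around the cycle'' is suggestive but does not by itself supply a peeling step for $R^{\sharp}_{\Bla}$ in that regime. The paper's reduction-of-$r$ device, together with Proposition 6.9, is exactly the missing mechanism.
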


\para{6.5.}  We prove the theorem in 6.10 after some preliminaries. 
Assume that $\Bla = (-,\la^{(2)}, \dots, \la^{(r)}) \in \SP_{n,r}^1$, and 
put $\Bmu = (\la^{(2)}, \dots, \la^{(r)})$.
We consider the function ${Q'}^{\sharp}_{\Bmu}(x';\Bt')$, defined similarly to 
$Q^{\sharp}_{\Bla}(x;\Bt)$, but by replacing $x$ by 
$x' = (x^{(2)}, \dots, x^{(r)})$ and $\Bt$ by $\Bt' = (t_2, \dots, t_{r-1}, t_1t_r)$.
We have a lemma.

\begin{lem}   %%%%   Lemma 6.6
Assume that $\Bla \in \SP^1_{n,r}$.  
Under the notation as above, we have 
\begin{equation*}
Q^{\sharp}_{\Bla}(x;\Bt) = {Q'}^{\sharp}_{\Bmu}(x';\Bt').
\end{equation*}  
\end{lem}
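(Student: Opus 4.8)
The plan is to compare the two functions through the explicit symmetrization formula (6.1.1), exploiting that $\la^{(1)}=\emptyset$ forces $m_1=0$, hence $J_1=1$ and $\ve^{(1)}=0$, so the first row carries a trivial partition part $(x^{(1)})^{\la^{(1)}-\ve^{(1)}}=1$. First I would set up the dictionary between the two instances: under $\mu^{(k)}=\la^{(k+1)}$ and $\Bt'=(t_2,\dots,t_{r-1},t_1t_r)$ one has $m'_k=m_{k+1}$ and $i'_0=i_0$, so $\ve'^{(\overline{k-1})}=\ve^{(k)}$ for $k\ge2$, while $I^{(k)}={I'}^{(\overline{k-1})}$ for $k\ge3$; and since $J_a$ and $J'_{a-1}$ share both the merged parameter $t_{a-1}\cdots t_1t_r$ and the same index range, $I^{(1)}=J_2\cdots J_r={I'}^{(\bar1)}$ (a function of rows $2,\dots,r$ only). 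Because $\la^{(1)}=\emptyset$ gives $k_0\ge2$, the constraint $(2,i)\le\nu_0$ means $i\le i_0$, and the only factor of the $r$-integrand that involves the first-row variables is $I^{(2)}=\prod_{i\le i_0}\prod_{i<j}(x^{(2)}_i-t_1x^{(1)}_j)$. Thus the $r$-integrand in (6.1.1) equals the $(r-1)$-integrand times $I^{(2)}$ divided by the truncated first-row Vandermonde $\prod_{i\le i_0}\prod_{i<j}(x^{(1)}_i-x^{(1)}_j)$.

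Next I would factor the coset sum. Here $\la'_1=m-i_0$ and $S_{\la'_1}$ permutes the first-row positions $i>i_0$, giving $S^r_m/S'_{\Bla}\cong(S_m^{(1)}/S_{\la'_1})\times(S_m^{r-1}/S'_{\Bmu})$; since the $(r-1)$-integrand and the extra factor $I^{(2)}$ live in disjoint variable blocks, the sum separates. Writing $G_0$ for the first-row contribution,
\[
G_0:=\sum_{w\in S_m^{(1)}/S_{\la'_1}}w\!\left(\frac{\prod_{i\le i_0}\prod_{i<j}(x^{(2)}_i-t_1x^{(1)}_j)}{\prod_{i\le i_0}\prod_{i<j}(x^{(1)}_i-x^{(1)}_j)}\right),
\qquad
R^{\sharp}_{\Bla}(x;\Bt)=\frac{v'_{\Bla}(\Bt)}{v'_{\Bmu}(\Bt')}\,G_0\,{R'}^{\sharp}_{\Bmu}(x';\Bt'),
\]
provided $G_0$ is independent of $x^{(2)},\dots,x^{(r)}$. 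Granting this, the explicit $v'_{\Bla}(\Bt)$ in $Q^{\sharp}_{\Bla}$ cancels the one produced, leaving exactly the $v'_{\Bmu}(\Bt')$ of ${Q'}^{\sharp}_{\Bmu}$, and the lemma reduces to the single scalar identity $G_0=f_{\Bla}(\Bt)/f_{\Bmu}(\Bt')$. A direct bookkeeping of the exponents, using $i'_0=i_0$ and $m'_{k-1}=m_k$, shows $A_i=A'_{i-1}$ for $i\ge2$, so the ratio collapses to $f_{\Bla}(\Bt)/f_{\Bmu}(\Bt')=t_1^{A_1}$.

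The main obstacle is therefore the evaluation $G_0=t_1^{A_1}$, and in particular the non-obvious fact that $G_0$ carries no $x^{(2)},\dots,x^{(r)}$. The key point is that the truncated Vandermonde is antisymmetric in $x^{(1)}_1,\dots,x^{(1)}_{i_0}$ and symmetric in $x^{(1)}_{i_0+1},\dots,x^{(1)}_m$, so the coset sum becomes a partial antisymmetrization that extracts the top-degree-in-$x^{(1)}$ part of $I^{(2)}$, namely the pure monomial $\prod_{i\le i_0}\prod_{i<j}(-t_1x^{(1)}_j)$. Its coefficient is a constant power of $t_1$ with no $x^{(2)}$-dependence — this is precisely why $G_0$ is independent of the higher rows. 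I would carry out the extraction with the operator $\SA_{x^{(1)}}$ exactly as in the proof of Lemma 5.7, combined with the Lagrange-interpolation and $|S_{m-1}|$-type identities of Lemmas 5.6 and 5.7 to handle the symmetrized tail positions $\{i_0+1,\dots,m\}$; the exponent pattern $\bigl(\min(i_0,j-1)\bigr)_{j=1}^m$ of that monomial sums to $A_1=\sum_{i\le i_0}(m-i)$, and the antisymmetrization sign cancels the sign in $(-t_1)^{A_1}$, yielding $G_0=t_1^{A_1}$.

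Finally I would assemble the pieces: substituting $G_0=t_1^{A_1}=f_{\Bla}(\Bt)/f_{\Bmu}(\Bt')$ into the displayed relation and dividing by $v'_{\Bla}(\Bt)f_{\Bla}(\Bt)$ gives $Q^{\sharp}_{\Bla}(x;\Bt)={Q'}^{\sharp}_{\Bmu}(x';\Bt')$. The delicate points to watch are the exact index ranges (both $I^{(2)}$ and the first-row Vandermonde are cut off at $i\le i_0$, not at $m$) and the interplay between genuine antisymmetrization over $\{1,\dots,i_0\}$ and symmetrization over the tail $\{i_0+1,\dots,m\}$ in the evaluation of $G_0$; everything else is the routine verification of the identifications $\ve'^{(\overline{k-1})}=\ve^{(k)}$, $I^{(k)}={I'}^{(\overline{k-1})}$, $J_a=J'_{a-1}$ and $A_i=A'_{i-1}$ recorded above.
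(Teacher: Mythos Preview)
Your approach is essentially identical to the paper's: the same nested coset decomposition (writing the $S_m^r/S'_{\Bla}$-sum as an outer sum over $S_m^{r-1}/S'_{\Bmu}$ with an inner sum over $S_m^{(1)}/S_{\la'_1}$), the same reduction to the scalar identity $G_0=t_1^{A_1}$ (the paper's (6.6.2)), and the same final bookkeeping $f_{\Bla}(\Bt)=t_1^{A_1}f_{\Bmu}(\Bt')$. One small note: the paper evaluates $G_0$ by a direct degree argument---the alternating $S_m$-sum of the numerator must be divisible by the full Vandermonde $\prod_{i<j}(x^{(1)}_i-x^{(1)}_j)$, and comparing degrees in $x^{(1)}$ forces only the pure monomial $\prod_{i\le i_0,\,i<j}(-t_1x^{(1)}_j)$ to survive---so Lemmas~5.6 and~5.7 are not actually needed here.
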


\begin{proof}
Since $m_1 = 0$, we have $J_1 = 1$. Hence
\begin{align*}
\tag{6.6.1}
& v'_{\Bla}(\Bt)\iv R^{\sharp}_{\Bla}(x;\Bt)  \\ 
   &= \sum_{w' \in S_{\Bm'}/S'_{\Bmu}}w'\biggl\{\sum_{w \in S^{(1)}_m /S'_{\la^{(1)}}}
        w\biggl(\prod_{\substack{i \le i_0 \\ i < j}}\frac{x_i^{(2)} - t_1x^{(1)}_j}
                          {x_i^{(1)} - x^{(1)}_j}\biggr)
             \frac{\prod_{k=2}^r(x^{(k)})^{\la^{(k)} -\ve^{(k)}}
                             I'^{(k)}(x';\Bt')}
             {\prod_{k = 2}^r\prod_{\substack{i \le i_0 \\i < j}}x^{(k)}_i - x^{(k)}_j}\biggr\},
\end{align*}
where $I'^{(k)}(x';\Bt')$ is defined by replacing $r$ by $r-1$ with respect to the variables
$x'$, and $S_{\Bm'} = S_m^{(2)} \times \cdots \times S_m^{(r)}$. 
We note that
\begin{equation*}
\tag{6.6.2}
\sum_{w \in S_m^{(1)}/S'_{\la^{(1)}}}w
   \biggl(\prod_{\substack{i \le i_0 \\ i < j}}\frac{x^{(2)}_i - t_1x^{(1)}_j}
                { x^{(1)}_i - x^{(1)}_j}\biggl)
     =  t_1^{A_1}.
\end{equation*} 
In fact,  let $S_0$ be the subgroup of $S_m$ which stabilizes  
$1, \dots,i_0$. Since $\la^{(1)} = \emptyset$, $S'_{\la^{(1)}} = S_0$. 
Let $X$ be the left hand side of (6.6.2). We use the notation $x^{(1)}_i = y_i, 
x^{(2)}_i = z_i$. $S_m$ acts only on $y_i$ variables.  Then 
\begin{align*}
X &= |S_0|\iv\sum_{w \in S_m }w\biggl\{\prod_{i < j}
           \frac{\prod_{\substack{i \le i_0 }}
            (z_i - t_1y_j)\prod_{\substack{i > i_0 }}
              (y_i - y_j)}{ y_i - y_j} \biggr\}\\
             &= |S_0|\iv\prod_{i < j}(y_i - y_j)\iv
                    \sum_{w \in S_m}\ve(w)w\biggl(\prod_{\substack{i \le i_0 \\ i < j}}
                        (z_i - t_1y_j) \prod_{\substack{ i > i_0 \\ i < j}}
                         ( y_i - y_j)\biggr). 
\end{align*}
Since $\sum_{w \in S_m}\ve(w)w\prod (z_i - t_1y_j) 
              \prod ( y_i - y_j)$
is an alternating polynomial with respect to $y_i$, 
non-zero contribution only comes from the term 
\begin{align*}
\prod_{\substack{i \le i_0 \\ i < j}}&(-t_1y_j) 
        \prod_{\substack{ i> i_0 \\ i < j}}(y_i - y_j)  \\ 
        &= (-t_1)^{A_1} (y_m y_{m-1}\cdots y_{i_0+1})^{i_0}
              (y_{i_0})^{i_0 -1}(y_{i_0-1})^{i_0-2}\cdots 
           y_2 \prod_{\substack{i > i_0 \\ i < j}}
                      (y_i - y_j). 
\end{align*}  
Since the last product can be written as 
\begin{equation*}
\sum_{w' \in S_0}\ve(w')w'
   \bigl((y_{i_0+1})^{m -i_0-1}(y_{i_0+2})^{m -i_0-2}\cdots y_{m -1}\bigr),
\end{equation*}   
we have
\begin{align*}
X &= (-t_1)^{A_1}\prod_{i < j}(y_i - y_j)\iv  \\
        &\times \sum_{w \in S_m}\ve(w)w\bigl((y_{i_0+1})^{m-1}(y_{i_0+2})^{m-2}\cdots 
                   (y_{m-1})^{i_0+1}(y_m)^{i_0}
             (y_{i_0})^{i_0-1}(y_{i_0-1})^{i_0-2}\cdots y_2\bigr)  \\
   &= t_1^{A_1}. 
\end{align*}
Thus (6.6.2) holds. 
\par
By (6.6.1) and (6.6.2), we have 
$v'_{\Bla}(\Bt)\iv R^{\sharp}_{\Bla}(x;\Bt) = 
      t_1^{A_1}v'_{\Bmu}(\Bt')\iv {R'}^{\sharp}_{\Bmu}(x';\Bt')$. 
Since $f_{\Bla}(\Bt) = t_1^{A_1}f_{\Bmu}(\Bt')$, we obtain the lemma. 
\end{proof}

\para{6.7.}
We consider the special case where $m_1 = \dots = m_{r-1} = 0$ 
and $m_r = i_0$. In this case, 
\begin{align*}
I^{(1)} = J_r &= \prod_{1 \le i \le i_0}\prod_{i \le j}(x_i^{(r)} - t_0x_j^{(r)})  \\
              &= (1 - t_0)^{i_0} \prod_{1 \le i \le i_0}x^{(r)}_i\prod_{i < j}
                    (x_i^{(r)} - t_0x_j^{(r)}),
\end{align*}
where $t_0 = t_1\cdots t_r$. 
We write $\Bla = (-; \cdots; -; \mu)$ with $\mu \in \SP_n$. 
By a similar computation as in the proof of 
Lemma 6.6 (note that $f_{\Bla}(\Bt) = 1$), we have 
\begin{equation*}
Q^{\sharp}_{\Bla}(x;\Bt) = v'_{\Bla}(\Bt)\iv R^{\sharp}_{\Bla}(x;\Bt) = 
       (1 - t_0)^{i_0} v_{\la'_r}(t_0)\iv
              R_{\mu}(x^{(r)}; t_0),
\end{equation*}
where $R_{\mu}$ is the function defined in [M, III, 1]. 
Under the notation in [M, III, 2], we have 
\begin{equation*}
R_{\mu}(x^{(r)};t_0) = v_{\mu}(t_0)P_{\mu}(x^{(r)};t_0)
    = v_{\mu}(t_0)b_{\mu}(t_0)\iv Q_{\mu}(x^{(r)};t_0),
\end{equation*}  
where $P_{\mu}, Q_{\mu}$ are classical Hall-Littlewood functions associated 
to the partition $\mu$. 
Since $v_{\mu}(t_0) = v'_{\mu}(t_0)b_{\mu}(t_0)/(1 - t_0)^{i_0}$ 
(see [M, III,2], note that $v_{\Bla'_r}(t_0) = v_{m - i_0}(t_0)$), we have 
the following.
\begin{lem}  %%%%  Lemma 6.8
Assume that $\Bla \in \SP_{n,r}^{r-1}$.  Then 
\begin{equation*}
\tag{6.8.1}
Q^{\sharp}_{\Bla}(x;\Bt) = Q_{\la^{(r)}}(x^{(r)}; t_1\cdots t_r).  
\end{equation*}
\end{lem}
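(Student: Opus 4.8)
The plan is to obtain the statement by combining the reduction already performed in 6.7 with the classical relations between $R_\mu$, $P_\mu$ and $Q_\mu$. Recall from 6.7 that for $\Bla = (-;\cdots;-;\mu) \in \SP_{n,r}^{r-1}$ one has $\nu_0 = (r,i_0)$ with $i_0 = \ell(\mu)$, that $I^{(1)} = J_r$ is the only nontrivial factor of its kind, and that the $S_m^r$-symmetrization defining $R^{\sharp}_\Bla$ collapses to
\[
Q^{\sharp}_\Bla(x;\Bt) = v'_\Bla(\Bt)\iv R^{\sharp}_\Bla(x;\Bt) = (1-t_0)^{i_0}\,v_{\la'_r}(t_0)\iv\,R_\mu(x^{(r)};t_0),
\]
where $t_0 = t_1\cdots t_r$ and $R_\mu$ is Macdonald's function in the single family of variables $x^{(r)}$. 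Thus all that remains is to match the scalar prefactor against the normalization relating $R_\mu$ and $Q_\mu$.

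First I would insert the two identities from [M, III, 2] recorded in 6.7, namely $R_\mu(x^{(r)};t_0) = v_\mu(t_0)\,b_\mu(t_0)\iv\,Q_\mu(x^{(r)};t_0)$ and $v_\mu(t_0) = v'_\mu(t_0)\,b_\mu(t_0)/(1-t_0)^{i_0}$. The second gives $v_\mu(t_0)\,b_\mu(t_0)\iv = v'_\mu(t_0)\,(1-t_0)^{-i_0}$, so
\[
Q^{\sharp}_\Bla(x;\Bt) = (1-t_0)^{i_0}\,v_{\la'_r}(t_0)\iv\cdot v'_\mu(t_0)\,(1-t_0)^{-i_0}\,Q_\mu(x^{(r)};t_0) = v_{\la'_r}(t_0)\iv\,v'_\mu(t_0)\,Q_\mu(x^{(r)};t_0).
\]
Now $\la'_r = m - i_0$ counts precisely the zero parts of $\mu$, so on the one hand $v_{\la'_r}(t_0) = v_{m-i_0}(t_0)$ (as noted in 6.7) and on the other hand $v'_\mu(t_0)$ is the same zero-part factor $v_{m-i_0}(t_0)$. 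Hence the two residual scalars cancel and $Q^{\sharp}_\Bla(x;\Bt) = Q_\mu(x^{(r)};t_0) = Q_{\la^{(r)}}(x^{(r)};t_1\cdots t_r)$, which is the assertion.

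The substantive ingredient is the reduction quoted above, i.e.\ the passage from the $S_m^r$-symmetrization in $R^{\sharp}_\Bla$ to the single classical $R_\mu(x^{(r)};t_0)$; I expect this to be the main obstacle. Were it to be proved from scratch, I would proceed exactly as in Lemma 6.6: symmetrize the empty rows $x^{(1)},\dots,x^{(r-1)}$ one at a time by the alternant identity (6.6.2), each step producing the power $t_i^{A_i}$ that assembles into $f_\Bla(\Bt)$ and cancels the factor $f_\Bla(\Bt)\iv$ built into $Q^{\sharp}_\Bla$, while the $J_r$-factor $\prod_{1\le i\le i_0}\prod_{i\le j}(x^{(r)}_i - t_0 x^{(r)}_j)$ fuses the parameters $t_1,\dots,t_r$ into $t_0$. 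After extracting $(1-t_0)^{i_0}\prod_{1\le i\le i_0}x^{(r)}_i$ from $J_r$, the surviving symmetrization over $S_m^{(r)}$ is by definition $R_\mu(x^{(r)};t_0)$ up to the stated constant. The only delicate point is the bookkeeping of these $v$- and $t$-factors; once it is done, the final identification is purely formal.
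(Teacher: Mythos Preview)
Your argument is correct and follows the paper's own route exactly: you take the reduction displayed in 6.7, namely $Q^{\sharp}_{\Bla}=(1-t_0)^{i_0}v_{\la'_r}(t_0)\iv R_\mu(x^{(r)};t_0)$, and then match scalars via the Macdonald identities $R_\mu=v_\mu b_\mu\iv Q_\mu$ and $v_\mu=v'_\mu b_\mu/(1-t_0)^{i_0}$, using $v'_\mu(t_0)=v_{m-i_0}(t_0)=v_{\la'_r}(t_0)$ to cancel. Your final paragraph, which keeps track of the $f_\Bla(\Bt)\iv$ factor and explains how the powers $t_i^{A_i}$ produced by successive applications of (6.6.2) cancel it, is in fact a slightly more careful account of the reduction than the paper's parenthetical ``note that $f_\Bla(\Bt)=1$'' (here the $f_\Bla$-factor is not literally $1$ but is precisely absorbed by those powers).
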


Next we show the following proposition.
%%%%
%%%%
\begin{prop} %%% Prop. 6.9
Assume that Theorem 6.4 holds for $r-1$.  Then for any $\Bla \in \SP^1_{n,r}$, 
we have
\begin{equation*}
\tag{6.9.1}
Q^+_{\Bla}(x;\Bt) = 
       \prod_{\nu < \nu'}\frac{\prod_{b(\nu) = b(\nu')}1 - R_{\nu,\nu'}}
             {\prod_{b(\nu') = b(\nu) + 1}1 - t_{b(\nu)}R_{\nu,\nu'}}{q}^+_{\Bla}.  
\end{equation*}
\end{prop}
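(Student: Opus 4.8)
The plan is to deduce (6.9.1) from the inductive hypothesis (Theorem 6.4 for $r-1$) together with the comparison result Theorem 3.5, and then to reconcile the resulting two raising-operator expressions by a generating-function computation resting on Lemma 3.2. Since $\Bla \in \SP^1_{n,r}$ we have $\la^{(1)} = \emptyset$, so Theorem 3.5(i) with $a = 1$ gives $Q^+_{\Bla}(x;\Bt) = {Q'}^+_{\Bmu}(x';\Bt')$, where $\Bmu = (\la^{(2)},\dots,\la^{(r)}) \in \SP_{n,r-1}$, $x' = (x^{(2)},\dots,x^{(r)})$ and $\Bt' = (t_2,\dots,t_{r-1},t_1t_r)$. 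Applying Theorem 6.4(ii) for $r-1$ to the right-hand side expresses ${Q'}^+_{\Bmu}(x';\Bt')$ as the $(r-1)$-variable raising-operator product applied to ${q'}^+_{\Bmu}$. Thus (6.9.1) is reduced to identifying this $(r-1)$-variable expression with the $r$-variable expression on the right-hand side of (6.9.1).

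First I would pass to the generating functions of 5.11. The derivation of (5.14.2) from $\Phi_+(\Bu)$ in Corollary 5.14 is a purely formal manipulation of power series and never uses condition (A); hence, for the index set $\SM = \SM^{(1)}$ with $r$ rows, the right-hand side of (6.9.1) is exactly the coefficient of $\Bu^{\Bla}$ in $\Phi_+(\Bu)$, and likewise the $(r-1)$-variable expression is the coefficient of ${\Bu'}^{\Bmu}$ in the $(r-1)$-variable analogue $\Phi'_+(\Bu')$ built from $\Bt'$ and attached to the rows $2,\dots,r$. Because $\la^{(1)}_i = 0$ for all $i$, the monomial $\Bu^{\Bla}$ carries no $u^{(1)}$-variable, so (6.9.1) becomes the single generating-function identity that the $u^{(1)}$-degree-zero part of $\Phi_+(\Bu)$ equals $\Phi'_+(\Bu')$.

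To prove this I would compare the two products factor by factor. The within-row factors for rows $2,\dots,r$, the adjacency factors for the pairs $(2,3),\dots,(r-1,r)$ (all carrying the parameters $t_2,\dots,t_{r-1}$), and the $\Psi$-factors for rows $3,\dots,r$ are literally the same in the two pictures, the last by the first part of Lemma 3.2(i). The only genuine discrepancy is at the cyclic boundary: $\Phi_+$ contains the extra row $1$, contributing $\prod_i\Psi^{(1)}_+(u^{(1)}_i)$, the within-row factors $\prod_{i<j}(1 - (u^{(1)}_i)\iv u^{(1)}_j)$, and two families of adjacency factors with parameters $t_r$ (for $(r,1)$) and $t_1$ (for $(1,2)$), whereas $\Phi'_+$ contains only $\prod_i {\Psi'}^{(2)}_+(u^{(2)}_i)$ and the single wrap family with parameter $t_1t_r$. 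Extracting the $u^{(1)}$-degree-zero part forces the positive powers of the $u^{(1)}$-variables coming from $\Psi^{(1)}_+$ and the $(r,1)$-factors to cancel against the negative powers coming from the $(1,2)$-factors; at the level of single-variable generating functions this is precisely the telescoping elimination of the $x^{(1)}$-factors underlying (3.2.1),
\begin{equation*}
\prod_i \frac{1 - t_1 u x_i^{(1)}}{1 - u x_i^{(2)}}\cdot\prod_i\frac{1 - t_1t_r u x_i^{(r)}}{1 - t_1 u x_i^{(1)}} = \prod_i\frac{1 - t_1t_r u x_i^{(r)}}{1 - u x_i^{(2)}},
\end{equation*}
which simultaneously eliminates the auxiliary variables $x^{(1)}$ and produces the parameter $t_1t_r$ together with the coupling of $x^{(2)}$ to $x^{(r)}$, i.e.\ the row-$2$ factor of $\Phi'_+$.

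The main obstacle is exactly this boundary step: organizing the multi-variable coefficient extraction so that the interior factors are untouched while the simultaneous elimination of all the $u^{(1)}$- and $x^{(1)}$-variables collapses the two boundary families (parameters $t_r$ and $t_1$, passing through the empty row $1$) into the single wrap family (parameter $t_1t_r$) of $\Phi'_+$. The underlying combinatorial mechanism is the composition law $R_{\nu,\nu'}R_{\nu',\nu''} = R_{\nu,\nu''}$ for raising operators, which under the matching of parameters sends $t_r R_{(r,i),(1,\ell)}\cdot t_1 R_{(1,\ell),(2,m)}$ to $t_1t_r R_{(r,i),(2,m)}$; combining this with the convolution (3.2.1) is the one place where the multi-parameter form of Lemma 3.2 is indispensable, and where the genuine computation lies.
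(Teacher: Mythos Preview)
Your approach is essentially the paper's: reduce to $r-1$ via Proposition~3.4 (your Theorem~3.5), apply the inductive hypothesis to get the $(r-1)$-variable raising-operator formula for ${Q'}^+_{\Bla'}$, and then use Lemma~3.2(i) to reconcile the cyclic boundary factor with parameter $t_1t_r$ against the two $r$-variable adjacency families carrying $t_1$ and $t_r$. The only difference is cosmetic: the paper performs the boundary step by a direct raising-operator computation---expanding $(t_1t_rR'_{\nu,\nu'})^{a}\bigl({q'}^{(2)}_j{q'}^{(r)}_i\bigr)$ via (3.2.1) and regrouping into products of $(t_1R^{(1)})$- and $(t_rR^{(r)})$-operators---rather than by extracting the $u^{(1)}$-degree-zero part of $\Phi_+(\Bu)$, but the underlying identity and the role of Lemma~3.2 are identical.
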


\begin{proof}
Write $\Bla = (-, \la^{(2)}, \dots, \la^{(r)}) \in \SP^1_{n,r}$ 
and $\Bla' = (\la^{(2)}, \dots, \la^{(r)}) \in \SP_{n,r-1}$.
By Proposition 3.4, we have $Q^+_{\Bla}(x;\Bt) = {Q'}^+_{\Bla'}(x', \Bt')$ with 
$\Bt' = (t_2, \dots, t_{r-1}, t_1t_r)$. 
By applying Theorem 6.4 (ii) for ${Q'}^+_{\Bla'}$, 
we have an expression 
\begin{equation*}
\tag{6.9.2}
{Q'}^+_{\Bla'}(x';\Bt') = \prod_{\nu < \nu'}\frac{\prod_{b(\nu) = b(\nu')}1 - R'_{\nu,\nu'}}
             {\prod_{b(\nu') = b(\nu) + 1}1 - t'_{b(\nu)}R'_{\nu,\nu'}}{q'}^+_{\Bla'}.  
\end{equation*}
where $t'_{b(\nu)}$ is defined with respect to $\Bt'$, and  
$R'_{\nu,\nu'}$ is the raising operator with 
respect to $\SM' = \{ (k,i) \mid 2 \le k \le r, 1 \le i \le m\}$. 
In particular $R'_{\nu,\nu'} = R_{\nu,\nu'}$ if $b(\nu) = b(\nu')$ 
or $b(\nu') = b(\nu)+1$ with $b(\nu) \ne r$. 
Assume that $b(\nu) = r$ and $b(\nu') = 2$.  
Write $\nu = (r, i), \nu' = (2, j)$.  In the computation below, we omit the sign ``$+$''.
By Lemma~3.2, 
${q'}^{(2)}_{j}(x; t_1t_r) = 
   \sum_{j_1 + j_2 = j}t_1^{j_1}q^{(2)}_{j_2}(x;t_1)q^{(1)}_{j_1}(x;t_r)$. 
For $a \ge 1$, we denote by $(tR'_{\nu,\nu'})^{[a]}$ the operator 
$(tR'_{\nu + a - 1,\nu'-a + 1})\cdots (tR'_{\nu+1, \nu-1})(tR'_{\nu, \nu'})$.
Then 
the action of $(t_1t_rR'_{\nu,\nu'})^{[a]}$ on ${q'}^{(2)}_{j}{q'}^{(r)}_{i}$ can be written as 
\begin{align*}
(t_1t_rR'_{\nu,\nu'})^{[a]}\bigl({q'}^{(2)}_{j}{q'}^{(r)}_{i}\bigr) 
    &= (t_1t_r)^a{q'}^{(2)}_{j-a}{q'}^{(r)}_{i+a} \\
    &= (t_1t_r)^a\sum_{j' + j'' = j-a}(t_1)^{j''}q^{(2)}_{j'}q^{(1)}_{j''}q^{(r)}_{i+a} \\
    &= \sum_{j_1 + j_2 = j}\sum _{0 \le b \le j_2} 
         (t_1)^{j_1 + b}t_r^aq^{(2)}_{j_2 -b}q^{(1)}_{j_1 - a + b}q^{(r)}_{i + a} \\
    &= \sum_{j_1 + j_2 = j}\sum_{0 \le b \le j_2}(t_1R^{(1)}_{j_1 - a,j_2})^{[b]}
               (t_rR^{(r)}_{i,j_1})^{[a]}
                        \bigl(t_1^{j_1}q^{(2)}_{j_2}q^{(1)}_{j_1}q^{(r)}_i\bigr) \\
    &= \sum_{j_1 + j_2 = j}\sum_{0 \le b \le j_2}(t_1R^{(1)}_{j_1 - a,j_2})^{[b]}
               (t_rR^{(r)}_{i,j_1})^{[a]}
                        (t_1R^{(1)}_{0,j})^{[j_1]}q^{(2)}_{j}q^{(r)}_i, 
\end{align*}
where $R^{(r)}_{i,j} = R_{(r,i), (1,j)}$ and $R^{(1)}_{j_1, j_2} = R_{(1,j_1), (2, j_2)}$. 
We understand that $R^{(1)}_{0,j}$ is the raising operator which sends 
$q^{(1)}_0 = 1$ to $q^{(1)}_1$ and $q^{(2)}_j$ to $q^{(2)}_{j-1}$. 
It follows that for any $\Bla \in \SP^1_{n,r}$, 
\begin{align*}
&\prod_{\substack{\nu < \nu' \\
       b(\nu) = r, b(\nu') = 2}}(1 - t_1t_rR'_{\nu,\nu'})\iv q'_{\Bla'}(x;\Bt') \\ 
  &= \prod_{\substack{\nu < \nu' \\b(\nu) = b(\nu') = 1}}(1 - R_{\nu,\nu'})
       \prod_{\substack{\nu < \nu' \\b(\nu) = 1 \\ b(\nu') = 2}} 
             (1 - t_1R_{\nu, \nu'})\iv
    \prod_{\substack{\nu < \nu' \\ b(\nu) = r \\ b(\nu') = 1}}
             (1 - t_rR_{\nu, \nu'})\iv q_{\Bla}(x;\Bt).
\end{align*}
Note that since $\la^{(1)} = \emptyset$, the operator 
$\prod_{b(\nu) = b(\nu') = 1}(1 - R_{\nu,\nu'})$ acts trivially. 
The proposition follows from this formula.
\end{proof}

\para{6.10.}
We are now ready to prove Theorem 6.4. Assume that $\la^{(1)} \ne \emptyset$.  
Let $\SM = \SM^{(1)}$ in the notation of 5.1, and consider $\SM ^{(a)}$ for 
$a = 1, \dots, r$. Then a similar formula as in Lemma 5.5
holds for $Q^{\sharp}_{\Bla}$. Repeating this procedure for $a = 1$ to $r$, one can replace
$\SM$ by $\SM'$ which is defined by replacing $m$ by $m-1$.  
By induction, we may assume that the statements (i), (ii) of the theorem hold 
for the corresponding function ${Q'}^{\sharp}_{\Bmu}$ on $\SM'$. 
In particular, ${Q'}^{\sharp}_{\Bmu}$ coincides with ${Q'}^+_{\Bmu}$, and 
has an expression in terms of raising operators
(assertion (ii) of the theorem).  This is equivalent to saying that 
${Q'}^{\sharp}_{\Bmu}$ can be expressed as a coefficient of ${\Bu'}^{\Bmu}$ in the 
function $\Phi_+(\Bu')$ given in Proposition~5.12.  
Then a similar argument as in the proof of Proposition 5.12 works for $Q^{\sharp}_{\Bla}$, 
and $Q^{\sharp}_{\Bla}$ can be expressed as the coefficient of $\Bu^{\Bla}$ in 
$\Phi_+(\Bu)$, in other words, $Q^{\sharp}_{\Bla}$ has an expression in terms of 
raising operators as in (ii) of the theorem (see Corollary~5.14).  
Since $Q^{\sharp}_{\Bla}$ satisfies 
the condition for the expansion by Schur functions (Lemma~6.2), we see that 
$Q^{\sharp}_{\Bla} = Q^+_{\Bla}$ by Theorem 2.14.  
Hence (i), (ii) holds in this case.
\par 
Next assume that $\la^{(1)} = \emptyset$, i.e., $\Bla \in \SP_{n,r}^1$.  
In this case, by Proposition 3.4, we have 
$Q^+_{\Bla}(x;\Bt) = {Q'}^+_{\Bmu}(x',\Bt')$ with 
$\Bmu = (\la^{(2)}, \dots, \la^{(r)})$ and $\Bt' = (t_2, \dots, t_{r-1}, t_1t_r)$. 
We also have $Q^{\sharp}_{\Bla}(x;\Bt) = {Q'}^{\sharp}_{\Bmu}(x';\Bt')$ by 
Lemma 6.6. 
By induction, we assume that the theorem holds for ${Q'}^+_{\Bmu}$.  
Hence ${Q'}^{\sharp}_{\Bmu}(x',\Bt') = {Q'}^+_{\Bmu}(x';\Bt')$.
This implies that $Q^{\sharp}_{\Bla}(x;\Bt) = Q^+_{\Bla}(x;\Bt)$, which proves (i).  
Then (ii) holds for $Q^+_{\Bla}(x;\Bt)$ by Proposition~6.9. 
\par 
It remains to consider the case where $\Bla \in \SP^{r-1}_{n,r}$, namely 
$\Bla = (-, \dots, -,\la^{(r)})$.  In this case, by Theorem 3.5 and Lemma 6.8, we have
$Q^{\sharp}_{\Bla}(x;\Bt) = Q^+_{\Bla}(x,\Bt) = Q_{\la^{(r)}}(x^{(r)}; t_1\cdots t_r)$.
Hence (i) holds. 
By [M, III, $(2.15')$], $Q_{\la^{(r)}}$ has an expression by raising operators. 
Then by a similar argument as in the proof of Proposition 6.9, one sees that 
$Q^+_{\Bla}$ has an expression by raising operators as in (ii) of the theorem.
\par
Finally, we show (iii).  
We already know by (4.2.5) and Theorem 5.15 
that $Q^-_{\Bla}= (1 - t_0)^{j_0}P^-_{\Bla}$. 
Then (iii) follows from (2.13.1).  The theorem is proved.   
\par\medskip
Combining Theorem 5.15 and Theorem 6.4, we have the following result.

\begin{thm}  %%%%   Theorem 6.11
Let $P^{\pm}_{\Bla}(x;\Bt), Q^{\pm}_{\Bla}(x;\Bt) \in \Xi^n_{\BQ}(\Bt)$ 
be the Hall-Littlewood functions. 
\begin{enumerate}
\item 
$P^{\pm}_{\Bla}(x;\Bt), Q^{\pm}_{\Bla}(x;\Bt) \in \Xi^n[\Bt]$.
\item 
$Q^{\pm}_{\Bla}(x;\Bt) = (1-t_1\cdots t_r)^{j_0}P^{\pm}_{\Bla}(x;\Bt)$, where 
$j_0$ is as in 4.2.
\item
$P^{\pm}_{\Bla}(x;\Bt)$ and $Q^{\pm}_{\Bla}(x;\Bt)$ are characterized by the property 
as in Theorem 2.14, but the total order $\lve$ can be replaced by the dominance order 
$\le $, and the coefficients $c_{\Bla,\Bmu}(\Bt), u_{\Bla,\Bmu}(\Bt) \in \BZ[\Bt]$. 
In particular, $P^{\pm}_{\Bla}, Q^{\pm}_{\Bla}$ are determined independently from 
the choice of the total order.
\item
$\{ P^{\pm}_{\Bla} \mid \Bla \in \SP_{n,r}\}$ give rise to $\BZ[\Bt]$-bases 
of $\Xi^n[\Bt]$. We have $K^{\pm}_{\Bla,\Bmu}(\Bt) \in \BZ[\Bt]$. 
\end{enumerate}
\end{thm}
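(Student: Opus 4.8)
The plan is to deduce all four assertions from the closed formulas of Theorem 5.15 and Theorem 6.4, treating the ``$-$'' and ``$+$'' cases by different routes and then obtaining (iii) and (iv) formally. I would prove (ii) and (i) first, since those are precisely what the closed formulas control, and afterwards read off (iii) and (iv).

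For the ``$-$'' case I would invoke Theorem 5.15, which holds with no restriction on $\Bla$ and gives $P^-_{\Bla}=\wt P^-_{\Bla}$ and $Q^-_{\Bla}=\wt Q^-_{\Bla}$. By the definitions (4.2.4) and (4.2.5) together with the divisibility (4.2.3), $\wt P^-_{\Bla}=v'_{\Bla}(\Bt)\iv R^-_{\Bla}$ lies in $\Xi^n[\Bt]$ and $\wt Q^-_{\Bla}=(1-t_0)^{j_0}\wt P^-_{\Bla}$, where $t_0=t_1\cdots t_r$. This yields (i) and (ii) at once for the ``$-$'' case.

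For the ``$+$'' case Theorem 5.15 is unavailable when the condition (A) fails, so I would route everything through Theorem 6.4. Its part (i) identifies $Q^+_{\Bla}=Q^{\sharp}_{\Bla}$ and its part (ii) expands $Q^+_{\Bla}$ by raising operators; expanding the (terminating) geometric series as in Corollary 5.14 writes $Q^+_{\Bla}=q^+_{\Bla}+\sum_{\Bmu>\Bla}c_{\Bla,\Bmu}(\Bt)q^+_{\Bmu}$ with $c_{\Bla,\Bmu}(\Bt)\in\BZ[\Bt]$, so $Q^+_{\Bla}\in\Xi^n[\Bt]$, while part (iii) gives (ii) for the ``$+$'' case. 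The main obstacle is then (i) for $P^+_{\Bla}$: since $P^+_{\Bla}=(1-t_0)^{-j_0}Q^+_{\Bla}$, I must show that $(1-t_0)^{j_0}$ divides $Q^+_{\Bla}=v'_{\Bla}(\Bt)\iv f_{\Bla}(\Bt)\iv R^{\sharp}_{\Bla}$ in $\Xi^n[\Bt]$. I would extract this factor from the block $J_r$ inside $I^{(1)}$: the diagonal terms of $J_r$, one for each $i$ with $m_{r-1}<i\le m_r$, equal $(x^{(r)}_i-t_0x^{(r)}_i)=(1-t_0)x^{(r)}_i$ and thus contribute the scalar $(1-t_0)^{m_r-m_{r-1}}=(1-t_0)^{j_0}$, which pulls out of the $S_m^r$-antisymmetrization; since $v'_{\Bla}(\Bt)$ and the monomial $f_{\Bla}(\Bt)$ are coprime to $1-t_0$ in $\BZ[\Bt]$, the quotient $P^+_{\Bla}$ remains in $\Xi^n[\Bt]$.

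Finally I would derive (iii) and (iv). For (iii) I combine Corollary 5.14, where the $q^{\pm}$-expansion of $Q^{\pm}_{\Bla}$ ranges over $\Bmu\ge\Bla$ in the dominance order with $\BZ[\Bt]$-coefficients, with (4.4.2), where the Schur expansion of $P^{\pm}_{\Bla}=\wt P^{\pm}_{\Bla}$ is unitriangular over $\Bmu\le\Bla$ with $\BZ[\Bt]$-coefficients; rescaling by $(1-t_0)^{j_0}$ transports these between $P$ and $Q$. Any function meeting these dominance-order conditions also meets the total-order conditions of Theorem 2.14, because the compatibility of $\lve$ with $\le$ forces $\Bmu\ge\Bla$ to imply $\Bmu\gve\Bla$ and dually; hence Theorem 2.14's uniqueness identifies it with $P^{\pm}_{\Bla}$, respectively $Q^{\pm}_{\Bla}$. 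As the dominance order does not refer to $\lve$, these functions are independent of the chosen total order. Assertion (iv) is then formal: (4.4.2) presents $\{P^{\pm}_{\Bla}\}$ as the image of the basis $\{s_{\Bla}\}$ under a matrix unitriangular over $\BZ[\Bt]$, hence invertible over $\BZ[\Bt]$, so $\{P^{\pm}_{\Bla}\}$ is a $\BZ[\Bt]$-basis of $\Xi^n[\Bt]$; expanding $s_{\Bla}\in\Xi^n[\Bt]$ in this basis forces $K^{\pm}_{\Bla,\Bmu}(\Bt)\in\BZ[\Bt]$.
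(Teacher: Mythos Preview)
Your overall architecture matches the paper's: use Theorem~5.15 for the ``$-$'' case, Theorem~6.4 for the ``$+$'' case, and then read off (iii) and (iv). Your argument for $P^+_{\Bla}\in\Xi^n[\Bt]$ via the diagonal factors of $J_r$ is correct (indeed $m_r-m_{r-1}=j_0$, and $(1-t_0)^{j_0}$ pulls out of $R^{\sharp}_{\Bla}$; coprimality with $v'_{\Bla}f_{\Bla}$ then gives divisibility in $\Xi^n[\Bt]$), and it actually supplies a detail the paper's proof passes over in silence.

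There is, however, one genuine slip in your treatment of (iii). You invoke (4.4.2) together with the identification $P^+_{\Bla}=\wt P^+_{\Bla}$, but this identification is part of Theorem~5.15 and holds only under condition~(A); by Remark~5.16, $\wt Q^+_{\Bla}$ (and hence $\wt P^+_{\Bla}$) does \emph{not} coincide with $Q^+_{\Bla}$ in general. So (4.4.2) is unavailable for the Schur expansion of $P^+_{\Bla}$ when (A) fails. The paper fixes this by using Proposition~6.2 instead: it gives $R^{\sharp}_{\Bla}=\sum_{\Bmu\le\Bla}u_{\Bla,\Bmu}(\Bt)s_{\Bmu}$ with $u_{\Bla,\Bmu}\in\BZ[\Bt]$, and since $P^+_{\Bla}$ is a $\BZ[\Bt]$-multiple of $R^{\sharp}_{\Bla}$ (by your own divisibility argument), the required dominance-triangular Schur expansion with $\BZ[\Bt]$-coefficients follows. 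Likewise, for the $q^+$-expansion you should cite Theorem~6.4(ii) directly rather than Corollary~5.14, which again assumes~(A); the raising-operator formula of 6.4(ii) yields (5.14.3) for all $\Bla$ by the same reasoning as in the proof of Corollary~5.14. With these two substitutions your argument for (iii) goes through.
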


\begin{proof}
(ii) follows from Theorem 5.15 and Theorem 6.4.
By (4.2.4) and (4.2.5), $\wt P^-_{\Bla}(x;\Bt), \wt Q^-_{\Bla}(x;\Bt) \in \BZ[x_{\SM};\Bt]$.  
Thus $P^-_{\Bla}, Q^-_{\Bla} \in \Xi^n[\Bt]$ by Theorem 5.15. 
By Theorem 6.4, $Q^+_{\Bla}$ has
an expression by raising operators.  Thus $Q^+_{\Bla} \in \Xi^n[\Bt]$.  
Hence (i) holds.  Then (iii) follows from Proposition 4.3 and Corollary 5.14 in the 
``$-$''case, and from Proposition 6.2 and Theorem 6.4 in the ``$+$''case. (iv) follows from (iii).   
\end{proof}

%%%%
%%%%
%%%%
\par\medskip
\section{A conjecture of Finkelberg-Ionov}

\para{7.1}
For two basis $u = \{u_{\Bla}\}, v = \{v_{\Bmu} \}$ on the $\BQ(\Bt)$-space $\Xi_{\BQ}(\Bt)$, 
we denote by $M(u,v) = (M_{\Bla\Bmu})$ the transition matrix between $u$ and $v$ 
as in the proof of Theorem 2.14., 
Recall the non-degenerate bilinear form $\lp \ , \ \rp$ on $\Xi^n_{\BQ}(\Bt)$ 
introduced in 2.10, which satisfies the properties
 
\begin{align*}
\lp q^{+}_{\Bla}, m_{\Bmu}\rp &= \lp m_{\Bla}, q^{-}_{\Bmu}\rp = \d_{\Bla,\Bmu}, \\
\lp P^+_{\Bla}, Q^-_{\Bmu}\rp &= \lp Q^+_{\Bla}, P^-_{\Bmu}\rp = \d_{\Bla,\Bmu}. \\
\end{align*} 
For a matrix $M$, let $M^*$ be the matrix $^t M\iv$.
We denote the basis $\{P^{\pm}_{\Bla}| \Bla \in \SP_{n,r}\}$ of $\Xi^n_{\BQ}(\Bt)$ by
$P^{\pm}$, and similarly define $Q^{\pm}, s, m$, with respect to 
$Q^{\pm}_{\Bla}, s_{\Bla}, m_{\Bla}$, respectively.  
Then we have
\begin{equation*}
M(Q^{\pm}, q_{\pm}) = M(P^{\mp}, m)^* = M(P^{\mp}, s)^*M(s,m)^*.
\end{equation*}
Since $M(s, P^{\mp}) = K(\Bt)^{\mp} = (K^{\mp}_{\Bla,\Bmu}(\Bt))$,  we have
$M(P^{\mp}, s)^* = {}^t(K(\Bt)^{\mp})$. 
The Kostka number $K_{\Bla,\Bmu}$ for $\Bla, \Bmu \in \SP_{n,r}$ is defined by 
$K_{\Bla,\Bmu} = \prod_iK_{\la^{(i)},\mu^{(i)}}$ if $|\la^{(i)}| = |\mu^{(i)}|$
for each $i$, and $K_{\Bla,\Bmu} = 0$ otherwise. Put $K = (K_{\Bla,\Bmu})$. 
We know that $M(s,m)^* = K^* = M(s, h)$, and by [M, I, $(3.4')$]
\begin{equation*}
s_{\Bla} = \prod_{\nu < \nu'}\prod_{b(\nu) = b(\nu')}(1 - R_{\Bnu,\Bnu'})h_{\Bla},
\end{equation*} 
where $h_{\Bla}$ is the complete symmetric function 
defined similarly to $s_{\Bla}, m_{\Bla}$.
Hence the operation $\prod_{\nu, \nu'}(1 - R_{\nu,\nu'})$ on the basis 
$\{ h_{\Bmu} \}$ corresponds to the matrix operation  $K^*$. 
Moreover, the matrix operation $M(P^{\mp},s)^*$ on the basis $\{ s_{\Bmu} \}$ 
coincides with the matrix operation $M(Q^{\pm},q_{\pm})$ on the basis $\{ h_{\Bmu} \}$. 
In particular, if we write $M(Q^{\pm},q_{\pm}) = (c^{\pm}_{\Bla\Bmu})$, 
we have

\begin{equation*}
\sum_{\Bmu}K^{\pm}_{\Bmu,\Bla}(\Bt)s_{\Bmu} = \sum_{\Bmu}c^{\mp}_{\Bla\Bmu}h_{\Bmu}
\end{equation*}
for each $\Bla$. 
By Corollary 5.14 and Theorem 6.4, we have an expression 
for $Q^{\pm}_{\Bla}$ such as (5.14.1), (5.14.2). 
Hence, for a fixed $\Bla$,
we have 
\begin{align*}
\tag{7.1.1}
\sum_{\Bmu}c^{+}_{\Bla\Bmu}h_{\Bmu} &= 
    \prod_{\substack{\nu < \nu' }}
        \frac{\prod_{b(\nu) = b(\nu')}(1 - R_{\nu,\nu'})}
       {\prod_{b(\nu') = b(\nu) + 1}(1  - t_{b(\nu)}R_{\nu,\nu'})}h_{\Bla} \\
             &= \prod_{\substack{\nu < \nu' }}
                    \prod_{b(\nu') = b(\nu) + 1}(1  - t_{b(\nu)}R_{\nu,\nu'})\iv s_{\Bla}, 
  \\ \\   
\tag{7.1.2}
\sum_{\Bmu}c^-_{\Bla\Bmu}h_{\Bmu} &= 
       \biggl(\prod_{\substack{\nu = (k,i) \\ \nu' = (k',j) \\ \nu < \nu'}}
                  \frac{\prod_{k = k', \nu \in \vD_1}(1 - R_{\nu,\nu'})}
                    {\prod_{\substack{k' = k-1  \\ (k,i) \in \vD_1 \\ (k',i) \in \vD_1}}
                           (1 - t_{k'}R_{\nu,\nu'})}
         \prod_{\substack{\nu < \nu' \\b(\nu) = b(\nu') \\ \nu \in \vD_0}}
             \frac{1 - R_{\nu,\nu'}}{1 - t_0R_{\nu,\nu'}}\biggr)h_{\Bla} \\   
          &= \prod_{\substack{\nu = (k,i) \\ \nu' = (k',j) \\ \nu < \nu'}}
               \prod_{\substack{k' = k-1  \\ (k,i) \in \vD_1 \\ (k',i) \in \vD_1}}
                   (1 - t_{k'}R_{\nu,\nu'})\iv          
           \prod_{\substack{\nu < \nu' \\b(\nu) = b(\nu') \\ \nu \in \vD_0}}
             (1 - t_0R_{\nu,\nu'})\iv s_{\Bla}.
\end{align*}
It follows that the right hand sides of (7.1.1) and (7.1.2) coincide with 
$\sum_{\Bmu}K^{\mp}_{\Bmu,\Bla}(\Bt)s_{\Bmu}$.  

\para{7.2.}
We keep the notation for $\SM = \SM^{(1)}$. 
Put $M = rm$.  The $n$-function $n(\xi)$ for $\xi \in \SP_M$ can be 
extended to any composition $\xi = (\xi_i)_{1 \le i \le M} \in \BZ^M_{\ge 0}$ 
by $n(\xi) = \sum_{i=1}^M(i-1)\xi_i$.   
Recall the $a$-function on $\SP_{n,r}$ ([S1]),
\begin{equation*}
a(\Bla) = r\cdot n(\Bla) + |\la^{(2)}| + 2|\la^{(3)}| + \cdots + (r-1)|\la^{(r)}|,
\end{equation*}  
where $n(\Bla) = n(\la^{(1)}) + \cdots + n(\la^{(r)})$. 
Let $c(\Bla)$ be the composition of $M$ associated to $\Bla$ as in 2.11.
We note that
\begin{equation*}
\tag{7.2.1}
n(c(\Bla)) = a(\Bla).
\end{equation*}
In fact 
\begin{align*}
n(c(\Bla)) &= \sum_{k=1}^r\sum_{i=1}^m \bigl((i-1)r + (k-1)\bigr)\la^{(k)}_i \\
           &= \sum_{k = 1}^rr\cdot n(\la^{(k)}) + \sum_{k=1}^r(k-1)|\la^{(k)}| \\
           &= a(\Bla).  
\end{align*}
\par
Let $\lp \ , \ \rp$ be the standard inner product on $\BZ^M$, and put 
$\d = (M-1, M-2, \dots, 0) \in \BZ^M$. Take $\Bla, \Bmu \in \SP_{n,r}$.  Then we have
\begin{equation*}
\tag{7.2.2}
\lp c(\Bla) + \d, \d \rp - \lp c(\Bmu)+\d, \d\rp = a(\Bmu) - a(\Bla).
\end{equation*}
In fact, if we put $\BM  = (M -1, M-1, \dots, M-1) \in \BZ^M$, then  
$\lp c(\Bla), \BM - \d\rp = n(c(\Bla)) = a(\Bla)$. Also we have 
$\lp c(\Bla), \BM \rp = \lp c(\Bmu), \BM\rp$ since $c(\Bla), c(\Bmu)$ are 
compositions of $M$.
Hence
\begin{align*}
a(\Bmu) - a(\Bla) &= \lp c(\Bmu), \BM - \d\rp - \lp c(\Bla), \BM - \d\rp \\ 
                  &=  \lp c(\Bla), \d\rp - \lp c(\Bmu), \d\rp.
\end{align*}
Thus (7.2.2) holds. 
\par
Let $\ve_1, \dots, \ve_M$ be the standard basis of $\BZ^M$. We denote by 
$R^+$ the set of positive roots of type $A_{M-1}$, namely 
$R^+ = \{ \ve_i - \ve_j \mid 1 \le i < j\le M\}$.  
We identify $\BZ^M$ with $\BZ^{\SM}$ by the given total order, and denote 
$\xi = (\xi_i)\in \BZ^M$ as $\xi = (\xi_{\nu})$ with $\nu \in \SM$. 
For any $\xi = (\xi_i) \in \BZ^M$
such that $\sum \xi_i = 0$, we define a function $L_{\pm}(\xi;\Bt)$ by 
\begin{equation*}
\tag{7.2.3}
L_{\pm}(\xi;\Bt) = \sum_{(m_{\g})}\prod_{\g}t_{b(\nu) - c}^{m_{\g}},
\end{equation*}   
where $c$ is as in (2.5.3), and 
$(m_{\g})$ runs over all the choices such that $\xi = \sum_{\g \in R^+}m_{\g}\g$ 
with $m_{\g} \ge 0$, and that $\g = \ve_{\nu} - \ve_{\nu'}$ for $\nu < \nu'$ with the condition 
\begin{equation*}
\tag{7.2.4}
b(\nu') = b(\nu) \mp 1.  
\end{equation*}
Note that $L_{\pm}(\xi;\Bt) \ne 0$ only when $\xi = \sum_i\e_i(\ve_i - \ve_{i+1})$ 
with $\e_i \ge 0$ satisfying the condition (7.2.4). We have $L_-(\xi;\Bt)$ is monic
of degree $\sum_i\e_i = \lp \xi, \d \rp$ (see [M, III, 6, Ex. 4]), and 
$\deg L_+(\xi;\Bt) < \lp \xi, \d\rp$ if $r \ge 3$.   
\par
In the ``$+$''-case, we define a function $L^{\Bmu}_+(\xi;\Bt)$ (depending on the
choice of $\Bmu$) by 
\begin{equation*}
\tag{7.2.5}
L^{\Bmu}_+(\xi;\Bt) = \sum_{(m_{\g})}\biggl(
      \prod_{\g \in A_1}t_{k-1}^{m_{\g}}\prod_{\g \in A_0}t_0^{m_{\g}}\biggr),
\end{equation*}   
where $(m_{\g})$ runs over all the choices such that $\xi = \sum_{\g \in R^+}m_{\g}\g$ 
with $m_{\g} \ge 0$, and that $\g = \ve_{\nu} - \ve_{\nu'}$ for $\nu < \nu'$ 
with the condition 
\begin{align*}
\tag{7.2.6}
A_1 &: \ \nu = (k,i) \in \vD_1, (k-1,i) \in \vD_1, \nu' = (k-1,j), \\
A_0 &: \ \nu \in \vD_0, b(\nu) = b(\nu') = r.  
\end{align*}
(here $\vD = \vD(\Bmu)$ is the subset of $\SM$ determined as in 5.11).

We have the following result.

\begin{thm}  %%%%   Theorem 7.3.
Let $\Bla, \Bmu \in \SP_{n,r}$.  Under the natural embedding $S_m^r \subset S_M$,
 we have
\begin{align*}
\tag{7.3.1}
K^{-}_{\Bla,\Bmu}(\Bt) &= \sum_{w \in S^r_m}\ve(w)
             L_{-}\bigl(w\iv (c(\Bla) + \d) - (c(\Bmu) + \d); \Bt \bigr), \\
\tag{7.3.2}
K^{+}_{\Bla,\Bmu}(\Bt) &= \sum_{w \in S^r_m}\ve(w)
             L^{\Bmu}_{+}\bigl(w\iv (c(\Bla) + \d) - (c(\Bmu) + \d); \Bt \bigr).
\end{align*} 
In particular, $K^-_{\Bla,\Bmu}(\Bt)$  is monic of degree $a(\Bmu) - a(\Bla)$, 
and $\deg K^+_{\Bla,\Bmu} < a(\Bmu) - a(\Bla)$ if $r \ge 3$. 
\end{thm}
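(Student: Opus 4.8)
The plan is to read off both formulas from the raising-operator expressions for $Q^{\pm}_{\Bla}$ already established in Corollary 5.14 and Theorem 6.4, transported to the level of Schur functions through the matrix identities of 7.1. Concretely, (7.1.1) and (7.1.2) exhibit $\sum_{\Bmu}K^{\mp}_{\Bmu,\Bla}(\Bt)s_{\Bmu}$ as a product of inverse factors $\prod_{\gamma}(1-t_{\gamma}R_{\gamma})^{-1}$ acting on $s_{\Bla}$, where $\gamma=\ve_{\nu}-\ve_{\nu'}$ ranges over exactly the positive roots singled out by the conditions (7.2.4) (for the product giving $K^-$) and (7.2.6) (for the product in (7.1.2) giving $K^+$), and $t_{\gamma}$ is the corresponding monomial $t_{b(\nu)-c}$ or $t_0$. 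First I would expand each inverse product as a formal power series $\sum_{(m_{\gamma})}\bigl(\prod_{\gamma}t_{\gamma}^{m_{\gamma}}\bigr)R^{\xi}$ with $\xi=\sum_{\gamma}m_{\gamma}\gamma$; by the very definitions (7.2.3) and (7.2.5), the total $\Bt$-coefficient of $R^{\xi}$ is precisely $L_{-}(\xi;\Bt)$ in the first case and $L^{\Bla}_{+}(\xi;\Bt)$ (with $\vD=\vD(\Bla)$) in the second.

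Next I would realize the action of the raising operators on the Schur basis via the Weyl-character description $s_{\Bla}=a_{c(\Bla)+\Bdel}/a_{\Bdel}$, where $a_{\Bbe}=\sum_{w\in S^r_m}\ve(w)w(x^{\Bbe})$ and $\Bdel=(\d^{(1)},\dots,\d^{(r)})$ as in the proof of Proposition 4.3; this is legitimate since [M, I, $(3.4')$] identifies $\prod_{b(\nu)=b(\nu')}(1-R_{\nu,\nu'})$ with the passage from $h$ to $s$ already used in 7.1, so that $R^{\xi}$ shifts the exponent, $R^{\xi}s_{\Bla}=a_{c(\Bla)+\Bdel+\xi}/a_{\Bdel}$. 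Straightening each of the $r$ blocks of variables separately, the quotient $a_{c(\Bla)+\Bdel+\xi}/a_{\Bdel}$ equals $\ve(w)s_{\Bmu}$ when $c(\Bla)+\Bdel+\xi=w(c(\Bmu)+\Bdel)$ for some $w\in S^r_m$ and vanishes when two exponents coincide in a block. Hence the coefficient of $s_{\Bmu}$ is $\sum_{w}\ve(w)L_{-}\bigl(w(c(\Bmu)+\Bdel)-(c(\Bla)+\Bdel);\Bt\bigr)$, which is $K^-_{\Bmu,\Bla}(\Bt)$; interchanging the roles of $\Bla,\Bmu$ and replacing $w$ by $w^{-1}$ (so that $\ve(w^{-1})=\ve(w)$) yields (7.3.1), and the identical argument in the "$+$" case yields (7.3.2). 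The bookkeeping that the admissible roots and their weights match the conditions $A_1,A_0$ of (7.2.6) is the point where the $\vD(\Bmu)$-dependence of $L^{\Bmu}_{+}$ enters.

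For the degree assertions I would invoke the estimates recorded in 7.2: $L_{-}(\xi;\Bt)$ is monic of degree $\langle\xi,\Bdel\rangle$, while $\deg L^{\Bmu}_{+}(\xi;\Bt)<\langle\xi,\Bdel\rangle$ for $r\ge 3$ (the $+$-sum being a restricted partition sum with the shift $c=1$). Since $c(\Bla)+\Bdel$ is strictly decreasing within each block, it is regular, so $\langle w^{-1}(c(\Bla)+\Bdel),\Bdel\rangle<\langle c(\Bla)+\Bdel,\Bdel\rangle$ for $w\ne 1$; thus among the terms of (7.3.1) the summand $w=1$ strictly dominates in degree, and by (7.2.2) its degree is $\langle c(\Bla)-c(\Bmu),\Bdel\rangle=a(\Bmu)-a(\Bla)$. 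This shows $K^-_{\Bla,\Bmu}$ is monic of degree $a(\Bmu)-a(\Bla)$, and the same comparison with the strict bound on $L^{\Bmu}_{+}$ gives $\deg K^+_{\Bla,\Bmu}<a(\Bmu)-a(\Bla)$ when $r\ge 3$.

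I expect the main obstacle to be the second step — the precise straightening of $a_{c(\Bla)+\Bdel+\xi}/a_{\Bdel}$ across the $r$ blocks together with the verification that the formal power-series coefficients reproduce $L_{-}$ and $L^{\Bmu}_{+}$ verbatim, including the correct weights $t_{b(\nu)-c}$ and $t_0$ and the correct embedding $S^r_m\subset S_M$. In particular one must check that no cancellation disturbs the leading term in the "$-$" case while confirming the strict degree drop in the "$+$" case, which is exactly where the asymmetry between the two cases (and the role of the shift $c=1$) becomes essential.
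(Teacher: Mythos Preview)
Your approach is essentially identical to the paper's: both read off (7.3.1)--(7.3.2) from the raising-operator formulas (7.1.1)--(7.1.2) by expanding each $(1-tR)^{-1}$ as a geometric series, straightening the resulting $a_{c(\Bmu)+\Bdel+\xi}/a_{\Bdel}$ via $S_m^r$, and then matching coefficients; the degree statements likewise come from the inequality $\lp w^{-1}(c(\Bla)+\d),\d\rp<\lp c(\Bla)+\d,\d\rp$ for $w\ne 1$ together with (7.2.2). One small point: the strict bound $\deg L^{\Bmu}_{+}(\xi;\Bt)<\lp\xi,\d\rp$ for $r\ge3$ is \emph{not} recorded in 7.2 (only the bound for $L_{+}$ is), and it is not immediate, since an $A_0$-root $\ve_{(r,i)}-\ve_{(r,i+1)}$ contributes $t_0$ of degree $r$ while $\lp\g,\d\rp=r$. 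The paper closes this by rewriting each such $\g$ as a sum of $r$ roots of ``$+$''-type, so that the degree of every monomial in $L^{\Bmu}_{+}$ equals that of a monomial in $L_{+}$, whence $\deg L^{\Bmu}_{+}\le\deg L_{+}<\lp\xi,\d\rp$; you should include this reduction.
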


\begin{proof}
By (7.1.1), $K^{-}_{\Bla,\Bmu}(\Bt)$ is the coefficient of $s_{\Bla}$ in 
\begin{equation*}
\prod_{\substack{\nu < \nu'}}
         \prod_{b(\nu') = b(\nu) + 1}
    (1 + t_{b(\nu)}R_{\nu,\nu'} + t_{b(\nu)}^2R^2_{\nu,\nu'} + \cdots )s_{\Bmu}, 
\end{equation*}
hence is the coefficient of $x^{\Bla + \Bdel_1}$ in 
\begin{equation*}
\sum_{w \in S^r_m}\ve(w)\sum_{(m_{\g})}\prod_{\g}t_{b(\nu)}^{m_{\g}}
                     x^{w(\Bmu + \Bdel_1 + \sum m_{\g})}, 
\end{equation*}
where $\Bdel_1 = (\d^{(1)}, \dots, \d^{(r)})$ with $\d^{(k)} = (m-1, \dots, 0)$  
and $(m_{\g})$ are given as in (7.2.3) and (7.2.5).  
We now consider the change of variables $\{ x^{(k)}_i \mid 1 \le k \le r, 1 \le i \le m\}$ 
to $\{ y_j \mid 1 \le j \le M \}$ by the assignment $x^{(k)}_i \mapsto y_{(i-1)r + k}$. 
Then the above coefficient coincides with the coefficient of $y^{c(\Bla) + \d}$ in 
\begin{equation*}
\sum_{w \in S^r_m}\ve(w)\sum_{(m_{\g})}\prod_{\g}t_{b(\nu)}^{m_{\g}}
                  y^{w(c(\Bmu) + \d + \sum m_{\g})}.
\end{equation*}
This proves (7.3.1).  
(7.3.2) is proved in a similar way by using (7.1.2). 
\par
We have
\begin{align*}
\lp w\iv(c(\Bla) + \d) - (c(\Bmu) + \d), \d \rp 
           &= \lp c(\Bla) + \d, w(\d)\rp - \lp c(\Bmu) + \d, \d\rp  \\
           &\le \lp c(\Bla) + \d, \d\rp - \lp c(\Bmu) + \d, \d\rp  \\
           &= a(\Bmu) - a(\Bla).
\end{align*}
The last step follows from  (7.2.2).
The equality holds only when $w = 1$.
Note that $\Bla$ is obtained from $\Bmu$ by applying the raising operator 
$R_{\nu,\nu'}$ with $b(\nu') = b(\nu) + 1$, $c(\Bla) - c(\Bmu)$ can be written 
as a sum of $\g \in R^+$ satisfying (7.2.4).  Hence $L_-(c(\Bla)-c(\Bmu))$ is 
monic of degree $a(\Bmu) - a(\Bla)$, and the same is true for $K^-_{\Bla,\Bmu}(\Bt)$.    
\par
Finally consider the degree of $K^+_{\Bla,\Bmu}(\Bt)$.  In this case,  
for $\nu = (r, i), \nu' = (r, i+1)$, $\g = \ve_{\nu} - \ve_{\nu'}$ can be written as 
\begin{equation*}
\g = (\ve_{(r,i)} - \ve_{(1,i+1)}) + (\ve_{(1,i+1)} - \ve_{(2, i+1)}) 
          + \cdots + (\ve_{(r-1, i+1)} - \ve_{(r,i+1)}).
\end{equation*}
Then the computation of $L^{\Bmu}_+(\xi;\Bt)$, which involves the monomials with respect to 
$t_1, \dots, t_r$ and $t_0$, is reduced to the computation of $L_+(\xi;\Bt)$, which 
involves only $t_1, \dots, t_r$.  Since $\deg L_+(\xi,\Bt) < \lp \xi, \d\rp$ (for $r \ge 3$), 
the assertion holds.  
\end{proof}

\para{7.4.}
In the ``$+$'' case, we consider a special situation where 
the function $L^{\Bmu}_+$ can be described easily. 
We put the  following condition for $\Bmu \in \SP_{n,r}$;
\par\medskip\noindent
(B) $\ell(\la^{(k)}) = i_0$ for $k = 1, \dots, r$. 
\par\medskip
If $\Bmu$ satisfies the condition (B), then 
$\vD_0 = \emptyset$ and $\vD_1 = \SM$ for $\vD = \vD(\Bmu)$. 
In this case, $L^{\Bmu}_+(\xi;\Bt)$ coincides with $L_+(\xi;\Bt)$.    
Hence as a corollary of Theorem 7.3 (ii), we have the following.
%%%%
%%%%
\begin{cor}   %%%%   Cor. 7.5
Assume that $\Bmu$ satisfies the condition \rm{(B)}.  Then we have
\begin{equation*}
K^+_{\Bla,\Bmu}(\Bt) = \sum_{w \in S^r_m}\ve(w)L_+ 
         \bigl(w\iv(c(\Bla) +\d) - (c(\Bmu) + \d);\Bt\bigl).
\end{equation*}
\end{cor}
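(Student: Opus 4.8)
The plan is to read off the corollary from formula (7.3.2) of Theorem 7.3, the sole input being that condition (B) degenerates the $\Bmu$-dependent weight $L^{\Bmu}_+$ into the universal weight $L_+$. Thus the entire task reduces to the combinatorial identity $L^{\Bmu}_+(\xi;\Bt) = L_+(\xi;\Bt)$ under (B), after which substitution into (7.3.2) finishes the proof.

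First I would record the consequences of (B) for $\vD = \vD(\Bmu)$. Since every component of $\Bmu$ has length $i_0$, we have $\mu^{(k)}_i \ne 0$ exactly when $i \le i_0$; in particular $\nu_0 = (r,i_0)$, and there is no index $(r,i) \in \vD$ with $\mu^{(k)}_i = 0$ for all $k < r$. Hence $\vD_0 = \emptyset$ and $\vD_1 = \vD$, as already noted in 7.4. I would then pass to the minimal value $m = i_0$, which is legitimate because $K^+_{\Bla,\Bmu}(\Bt)$ vanishes unless $\Bmu \le \Bla$ in the dominance order (Theorem 6.11(iii)); reading off the partial sum of $c(\Bmu)$ up to the end of row $i_0$, which equals $n = |\Bmu|$, dominance forces the corresponding partial sum of $c(\Bla)$ to be $n$ as well, so that the whole mass of $\Bla$ lies in rows $\le i_0$ and $\ell(\la^{(k)}) \le i_0$ for every $k$. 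With $m = i_0$ we obtain $\SM = \vD = \vD_1$, so $\SM$ has no row beyond $i_0$.

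With these facts in hand, I would inspect the definition (7.2.5)--(7.2.6) of $L^{\Bmu}_+$. As $\vD_0 = \emptyset$, the family $A_0$ is empty and the factor $\prod_{\g \in A_0}t_0^{m_\g}$ drops out entirely. As $\vD_1 = \SM$, the two membership requirements $\nu \in \vD_1$ and $(k-1,i) \in \vD_1$ defining $A_1$ become vacuous, so $A_1$ collapses to the set of all roots $\g = \ve_\nu - \ve_{\nu'}$ with $\nu < \nu'$ and $b(\nu') = b(\nu) - 1$, carrying the weight $t_{k-1} = t_{b(\nu)-1} = t_{b(\nu)-c}$ with $c = 1$. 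This is precisely the data entering $L_+$ in the ``$+$''-case of (7.2.3), whence $L^{\Bmu}_+(\xi;\Bt) = L_+(\xi;\Bt)$ for every $\xi$. Plugging this equality into (7.3.2) yields the asserted formula.

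The argument is essentially immediate once (7.3.2) is available; the only place calling for care is the identification of the two systems of roots, and the reason I would first reduce to $m = i_0$ is exactly to make the row conditions hidden in $A_1$ vacuous, so that the root systems underlying $L^{\Bmu}_+$ and $L_+$ literally coincide rather than merely agreeing on the arguments relevant to the Kostka computation.
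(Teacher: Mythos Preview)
Your proof is correct and follows the same route as the paper's: under (B) one has $\vD_0 = \emptyset$ and $\vD_1 = \SM$, so $L^{\Bmu}_+$ collapses to $L_+$, and substitution into (7.3.2) gives the result. Your reduction to $m = i_0$ via the dominance argument is an explicit justification of the identity $\vD_1 = \SM$ that the paper simply asserts in 7.4; it is a useful clarification but not a different method.
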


\para{7.6.}
In [FI], Finkelberg and Ionov defined the multi-variable 
Kostka polynomials $K_{\Bla,\Bmu}(\Bt)$ 
by using the Lusztig's partition function ([L1]) 
as defined in (7.2.3) for ``$-$''case, which 
is exactly the formula in the right hand side of (7.3.1).  They conjectured 
(in the case where $t = t_1 = \dots =  t_r$) that $K_{\Bla,\Bmu}(t)$ coincides with 
our $K_{\Bla,\Bmu}^-(t)$. Theorem 7.3 gives an affirmative answer to their conjecture
(for the multi-variable case).    
Following [FI], we say that $\Bmu \in \SP_{n,r}$ is regular if 
$\mu_1^{(k)} > \mu_2^{(k)} > \cdots > \mu^{(k)}_m$ for $k = 1, \dots, r$. 
They proved in [FI], in the case where $\Bmu$ is regular,  
that $K_{\Bla,\Bmu}(\Bt) \in \BZ_{\ge 0}[\Bt]$ by making use
of the higher cohomology vanishing of a certain vector bundle over 
the flag variety of $(GL_m)^r$. Recently Hu [H] proved the higher cohomology vanishing 
for arbitrary $\Bmu$, hence the positivity property of $K_{\Bla,\Bmu}(\Bt)$ now holds 
without any restriction. 
\par
Combined with Theorem 7.3, we have

\begin{prop}  %%%%   Prop 7.7.
$K^-_{\Bla,\Bmu}(\Bt) \in \BZ_{\ge 0}[\Bt]$. 
\end{prop}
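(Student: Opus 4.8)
The plan is to obtain this positivity as an immediate consequence of the identification already established in Theorem 7.3 together with the geometric input imported from [FI] and [H]. First I would observe that formula (7.3.1) expresses $K^-_{\Bla,\Bmu}(\Bt)$ as the alternating sum
\[
K^-_{\Bla,\Bmu}(\Bt) = \sum_{w \in S^r_m}\ve(w)\, L_-\bigl(w\iv(c(\Bla)+\d) - (c(\Bmu)+\d);\Bt\bigr),
\]
where $L_-$ is the Lusztig-type partition function defined in (7.2.3). As noted in 7.6, this is precisely the polynomial $K_{\Bla,\Bmu}(\Bt)$ that Finkelberg and Ionov attach to the pair $(\Bla,\Bmu)$ via Lusztig's partition function on $GL_{mr}$. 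Thus the entire content of the proposition is contained in the equality $K^-_{\Bla,\Bmu}(\Bt) = K_{\Bla,\Bmu}(\Bt)$, which is Theorem 7.3, combined with the positivity of the right-hand side.

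Next I would invoke that positivity. Finkelberg and Ionov proved in [FI] that $K_{\Bla,\Bmu}(\Bt) \in \BZ_{\ge 0}[\Bt]$ whenever $\Bmu$ is regular, by establishing the higher cohomology vanishing $H^{i>0}(\SX, \wt\SO(\Bmu)) = 0$ for the relevant $(GL_m)^r$-equivariant vector bundle over the flag variety (via a Frobenius splitting argument). This vanishing exhibits $K_{\Bla,\Bmu}(\Bt)$ as recording the graded dimensions of spaces of global sections, forcing all coefficients to be non-negative integers. Hu [H] subsequently removed the regularity hypothesis by proving the same cohomology vanishing for arbitrary $\Bmu$, so the positivity holds for every $\Bmu \in \SP_{n,r}$.

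Combining these two inputs, I would conclude $K^-_{\Bla,\Bmu}(\Bt) = K_{\Bla,\Bmu}(\Bt) \in \BZ_{\ge 0}[\Bt]$, as asserted. The substantive work is not in this final step but upstream: in Theorem 7.3, whose proof (through the closed raising-operator formulas of Corollary 5.14 and Theorem 6.4, and the matrix bookkeeping of 7.1) matches our combinatorially defined $K^-_{\Bla,\Bmu}(\Bt)$ with the partition-function expression of [FI]. Once that bridge is in place, the positivity is purely a matter of transporting the geometric statement across the identification, so the present proposition is a direct corollary. I expect the only genuine subtlety to be ensuring that the sign, the variable $t_{b(\nu)-c}$, and the root-condition (7.2.4) conventions of $L_-$ agree exactly with those in [FI]; this is already handled in securing (7.3.1), and nothing further is needed here.
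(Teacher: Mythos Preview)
Your proposal is correct and matches the paper's approach exactly: the proposition is stated in the paper simply as ``Combined with Theorem~7.3, we have'' the positivity, i.e., it is an immediate corollary of the identification $K^-_{\Bla,\Bmu}(\Bt) = K_{\Bla,\Bmu}(\Bt)$ from (7.3.1) together with the higher cohomology vanishing of [FI] (regular $\Bmu$) and [H] (general $\Bmu$) recalled in~7.6. Your write-up is somewhat more detailed than what the paper gives, but the logical content is identical.
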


\remark{7.8.} The last statement in Theorem 7.3 and Proposition 7.7 give 
an answer to the conjecture proposed in [S1, Conjecture 5.5] at least for the 
``$-$''case. 

\para{7.9.} 
Let $\Bth = (\th^{(1)}, \dots, \th^{(r)})$ be an $r$-partition, where 
$\th^{(k)} = (\th_1, \th_2, \dots, \th_m)$ for 
$k = 1, \dots, r$ (independent of $k$). Let $\Bla, \Bmu \in \SP_{n,r}$.  
Then $\Bla + \Bth, \Bmu + \Bth \in \SP_{n',r}$ for some $n'$. 
As a corollary of Theorem 7.3 and Corollary 7.5, we have the following 
result, which was conjectured by Finkelberg (for the ``$-$''case).

\begin{cor}  %%%%   Cor. 7.10
Let $\Bla, \Bmu \in \SP_{n,r}$. 
Assume that $\th_1 \gg \th_2 \gg \cdots \gg \th_m > 0$. 
Then $K^{\pm}_{\Bla+\Bth, \Bmu + \Bth}(\Bt)$ has a stable value, independent of the 
choice of $\Bth$, and we have
\begin{equation*}
K^{\pm}_{\Bla + \Bth, \Bmu + \Bth}(\Bt) = L_{\pm}(c(\Bla) - c(\Bmu);\Bt).
\end{equation*} 
\end{cor}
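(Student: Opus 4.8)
The plan is to deduce the corollary directly from the closed ``partition-function'' formula of Theorem 7.3, using that $c$ is additive and that the diagonal shift $\Bth$ contributes a column-constant vector $c(\Bth) = (\th_1^r, \th_2^r, \dots, \th_m^r) \in \BZ^M$, where $M = rm$. First I would dispose of the ``$+$''-case bookkeeping: since $\th_m > 0$, every entry $(\mu+\th)^{(k)}_i = \mu^{(k)}_i + \th_i$ is positive, so each row of $\Bmu + \Bth$ has length $m = i_0$ and $\Bmu + \Bth$ satisfies condition (B) of 7.4; by Corollary 7.5 we may therefore replace $L^{\Bmu+\Bth}_+$ by $L_+$ in (7.3.2). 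Setting $\a = c(\Bla) + c(\Bth) + \d$ and $\b = c(\Bmu) + c(\Bth) + \d$, Theorem 7.3 then reads, uniformly in the two cases,
\begin{equation*}
K^{\pm}_{\Bla+\Bth,\,\Bmu+\Bth}(\Bt) = \sum_{w \in S^r_m} \ve(w)\, L_{\pm}\bigl(w\iv \a - \b;\,\Bt\bigr).
\end{equation*}
The $w = 1$ summand is $L_{\pm}(\a - \b;\Bt) = L_{\pm}(c(\Bla) - c(\Bmu);\Bt)$, exactly the claimed stable value, so the whole problem reduces to showing that every $w \ne 1$ term vanishes once the parts of $\Bth$ are sufficiently spread out.

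For this I would invoke the support statement recorded in 7.2: $L_{\pm}(\xi;\Bt) \ne 0$ only if $\xi = \sum_i \e_i(\ve_i - \ve_{i+1})$ with all $\e_i \ge 0$, equivalently only if every partial sum $\sigma_p(\xi) = \sum_{q \le p} \xi_q$ is nonnegative. I would evaluate these at the block boundaries $p = ir$ ($1 \le i \le m-1$), whose first $ir$ coordinates are precisely $\{(k,i') : 1 \le k \le r,\ i' \le i\}$. Since $w = (w_1, \dots, w_r)$ permutes each row separately and $c(\Bth)$ is constant along columns, the $\Bth$-contribution to $\sigma_{ir}(w\iv \a)$ equals $\sum_{k=1}^r \sum_{i' \le i} \th_{w_k(i')}$, whereas the $\Bth$-contribution to $\sigma_{ir}(\b)$ equals $r \sum_{i' \le i} \th_{i'}$; all remaining contributions, coming from $c(\Bla), c(\Bmu)$ and $\d$, are bounded in absolute value by a constant $C = C(\Bla, \Bmu, m, r)$ independent of $\Bth$. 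Hence
\begin{equation*}
\sigma_{ir}(w\iv \a - \b) \le \Bigl(\sum_{k=1}^r \sum_{i' \le i} \th_{w_k(i')} - r \sum_{i' \le i} \th_{i'}\Bigr) + C .
\end{equation*}
Because $\th_1 > \th_2 > \cdots > \th_m$, the sum of any $i$ distinct values $\th_j$ is maximised by $\th_1 + \cdots + \th_i$, with equality iff the chosen index set is $\{1, \dots, i\}$; so the bracket is $\le 0$, and is $\le -(\th_i - \th_{i+1})$ whenever some $w_k(\{1, \dots, i\}) \ne \{1, \dots, i\}$.

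To finish, for $w \ne 1$ I would choose $k$ with $w_k \ne \id$ and let $i_* \le m-1$ be minimal with $w_k(\{1, \dots, i_*\}) \ne \{1, \dots, i_*\}$; then the displayed estimate gives $\sigma_{i_* r}(w\iv \a - \b) \le -(\th_{i_*} - \th_{i_*+1}) + C$, which is strictly negative as soon as the gap $\th_{i_*} - \th_{i_*+1}$ exceeds $C$. The hypothesis $\th_1 \gg \th_2 \gg \cdots \gg \th_m > 0$ is exactly the freedom to take all consecutive gaps larger than $C$, so each $w \ne 1$ term is annihilated and only the $w = 1$ term survives, yielding $K^{\pm}_{\Bla+\Bth, \Bmu+\Bth}(\Bt) = L_{\pm}(c(\Bla) - c(\Bmu);\Bt)$; since this value does not involve $\Bth$, stability is automatic. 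The steps needing the most care are making $C$ uniform over the finitely many pairs $(w, i)$, and the ``$+$''-case reduction of $L^{\Bmu+\Bth}_+$ to $L_+$ via condition (B) and Corollary 7.5 --- without which $L^{\Bmu}_+$ and $L_+$ need not agree; the partition-function support statement of 7.2 is the device that upgrades a single negative block-boundary partial sum into outright vanishing.
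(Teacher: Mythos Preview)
Your argument is correct and follows exactly the same line as the paper's proof: invoke Theorem~7.3 (and Corollary~7.5 in the ``$+$''-case via condition~(B), since $\th_m>0$), then observe that for $w\ne 1$ the summand $L_{\pm}(w\iv(c(\Bla+\Bth)+\d)-(c(\Bmu+\Bth)+\d);\Bt)$ vanishes once the gaps $\th_i-\th_{i+1}$ are large, leaving only the $w=1$ term $L_{\pm}(c(\Bla)-c(\Bmu);\Bt)$. The paper states this vanishing in a single sentence, whereas you have supplied the explicit partial-sum justification at the block boundaries $p=ir$; your exposition is a correct fleshing-out of what the paper leaves implicit.
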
 

\begin{proof}
The value $K^{\pm}_{\Bla + \Bth, \Bmu + \Bth}(t)$ can be 
expressed by the formula in Theorem 7.3 and Corollary 7.5.
(Note that in the ``$+$''case, (B) holds for $\Bmu + \Bth$ since $\th_m > 0$.) 
By our assumption 
$\th_1 \gg \th_2 \gg \cdots \gg \th_m$, the non-zero contribution only occurs in 
the case where $w = 1$.  Hence 
\begin{equation*}
K^{\pm}_{\Bla + \Bth, \Bmu + \Bth}(\Bt) = L_{\pm}(c(\Bla + \Bth) - c(\Bmu + \Bth);\Bt)
                                      = L_{\pm}(c(\Bla) - c(\Bmu);\Bt).
\end{equation*}  
The corollary is proved.
\end{proof}

\par\bigskip

\par\vspace{1cm}
\noindent
T. Shoji \\
Department of Mathematics, Tongji University \\ 
1239 Siping Road, Shanghai 200092, P. R. China  \\
E-mail: \verb|shoji@tongji.edu.cn|

\end{document}